\documentclass[11pt]{amsart}
\usepackage{fullpage,verbatim,amssymb}
\usepackage{hyperref}
\usepackage[active]{srcltx}
\usepackage[cmtip,arrow,matrix,curve,tips,frame]{xy}
\usepackage[usenames,dvipsnames]{color}

\makeatletter
\let\@@pmod\mod
\DeclareRobustCommand{\mod}{\@ifstar\@pmods\@@pmod}
\def\@pmods#1{\mkern4mu({\operator@font mod}\mkern 6mu#1)}
\makeatother

\definecolor{blue}{rgb}{0,0,1}
\definecolor{red}{rgb}{1,0,0}
\definecolor{green}{rgb}{0,.6,.2}
\definecolor{purple}{rgb}{1,0,1}
\long\def\red#1\endred{\textcolor{red}{#1}}
\long\def\blue#1\endblue{\textcolor{blue}{#1}}
\long\def\purple#1\endpurple{\textcolor{purple}{ #1}}
\long\def\green#1\endgreen{\textcolor{green}{#1}}

\newcommand{\bsl}{\backslash}

\newcommand{\Z}{\mathbb{Z}}
\newcommand{\Q}{\mathbb{Q}}
\newcommand{\R}{\mathbb{R}}
\newcommand{\C}{\mathbb{C}}

\newcommand{\HH}{\mathbb{H}}

\newcommand{\scrL}{\mathcal{L}}

\newcommand{\cuspa}{\mathfrak{a}}

\DeclareMathOperator{\SL}{SL}

\DeclareMathOperator{\GL}{GL}

\DeclareMathOperator{\ord}{{ord}}

\DeclareMathOperator{\lcm}{{lcm}}
\DeclareMathOperator{\Sym}{{Sym}}

\newcommand{\sgn}{{\rm sign}} 


\newcommand{\cont}{{\rm cont}}

\newcommand{\Res}{{\rm Res}}
\newcommand{\sym}{{\rm sym}}

\newcommand{\sm}{\left(\begin{smallmatrix}}
\newcommand{\esm}{\end{smallmatrix}\right)}
\newcommand{\bpm}{\begin{pmatrix}}
\newcommand{\ebpm}{\end{pmatrix}}

\newtheorem{theorem}{Theorem}
\newtheorem{lemma}[theorem]{Lemma}
\newtheorem{proposition}[theorem]{Proposition}
\newtheorem{corollary}[theorem]{Corollary}

\theoremstyle{remark}

\newtheorem{remark}[theorem]{Remark}

\numberwithin{theorem}{section}
\numberwithin{equation}{section}

\title{First moments of Rankin-Selberg convolutions of automorphic forms on $\GL(2)$}
\author{Jeff Hoffstein, Min Lee and Maria Nastasescu}
\thanks{M.~L.\ was supported by Royal Society University Research Fellowship ``Automorphic forms,
L-functions and trace formulas''. (AMS classifications 11M32, 11M36)}
\address{Mathematics Department, Brown University, Providence RI 02912, USA }
\email{\tt jhoff@math.brown.edu}
\address{School of Mathematics, University of Bristol, Bristol, BS8 1TW, UK}
\email{\tt min.lee@bristol.ac.uk}
\address{Mathematics Department, Northwestern University, Evanston, IL 60208, USA}
\email{\tt mnastase@math.northwestern.edu}

\begin{document}

\maketitle

\begin{abstract}
We obtain a first moment formula for Rankin-Selberg convolution $L$-series 
of holomorphic modular forms or Maass forms of arbitrary level on $\GL(2)$, 
with an orthonormal basis of Maass forms.    
One consequence is the best result to date, uniform in level, spectral value and weight, 
for the equality of two Maass or holomorphic cusp forms if their
Rankin-Selberg convolutions with the orthonormal basis of Maass forms $u_j$ is equal at the center 
of the critical strip for sufficiently many $u_j$.   

The main novelty of our approach is the new way the error
terms are treated.  
They are brought into an exact form that
provides optimal estimates for this first moment case, and also provide a basis
for an extension to second moments, which will appear in another work. 
\end{abstract}

\tableofcontents

\section{Introduction}\label{s:intro}
The objective of this paper is to obtain a first moment formula for Rankin-Selberg convolution $L$-series 
of holomorphic modular forms or Maass forms of arbitrary level on $\GL(2)$, 
with an orthonormal basis of Maass forms, and to derive some consequences.   
A second paper  \cite{HL20s}  builds on this result to obtain a second moment formula, with additional applications.  

Let $\phi$ be an automorphic form of level $N$.
The spectral  first moment  we intend to analyze is:
\begin{equation}\label{e:firstmoment_prel}
\sum_j \frac{h(r_j)}{\cosh(\pi r_j)} \rho_j(n) \scrL(s, \phi\times \overline{u_j}) 
+ \text{ continuous spectrum, }
\end{equation}
where 
the $\{u_j\}_{j\geq 1}$ are an orthonormal basis for Maass forms of level $M$ (divisible by $N$) 
with Laplace eigenvalues $\frac{1}{4}+r_j^2$, and $\rho_j(n)$ their $n$th Fourier coefficients. 
Here $\scrL(s, \phi\times \overline{u_j})$ is the Rankin-Selberg convolution of $\phi$ with $u_j$ as given in \eqref{e:RS_phiuj}.
The $u_j$ could be an oldform or a newform. 
We are able to treat the Rankin-Selberg $L$-functions of newforms and oldforms in a uniform manner as we are not using the approximate functional equation. 
We only use the functional equation of the additive twist $L$-functions of $\phi$, which is a newform. 
We describe our methods in the next paragraph. 
The continuous spectrum part is given in \eqref{e:K_firstmoment_def}.
The function $h(r)$ is a test function that limits the range of  the parameter $r=r_j$ for the Laplace eigenvalues, 
which satisfies the conditions given in \S\ref{ss:first}. 
In particular, for our applications, we choose $h(r)$ such that it decays exponentially when $|T-|r|| \gg T^{\alpha}$ and remains close to constant when $|T-|r|| \ll T^{\alpha}$ for a fixed $T\gg 1$ and $\frac{1}{3} < \alpha< \frac{2}{3}$, 
as given in \eqref{e:testh_f}.
 
Our approach will be to start with the left hand side of the first moment formula \eqref{e:firstmoment_prel}. 
For sufficiently large $\Re(s)>1$, we will open up the Rankin-Selberg convolutions, 
and apply the Bruggeman-Kuznetsov trace formula \eqref{e:Kuznetsov}. 
This process leads us to certain $L$-series of $\phi$ twisted by Kloosterman sums and $J$-Bessel functions  
\eqref{e:K0_open}.
These $L$-series are studied in \S\ref{ss:L-K-B}. 
To accomplish this analysis we use the Mellin inversion of the $J$-Bessel function, and rewrite the series as 
the inverse Mellin transform of a linear combination of additive twists of $L$-functions of $\phi$.    
We then use the meromorphic continuation of additive twists of the $L$-function of $\phi$, 
and apply its functional equation in Proposition~\ref{prop:L_fe}. 
Then we study the Mellin-type integrals \eqref{e:F1_def} and \eqref{e:F2_def}, 
in Lemma~\ref{lem:F1} and Lemma~\ref{lem:F2}, respectively. 
When $2u$ is an integer ( here $u$ is a parameter in the Mellin-type integrals in Lemma~\ref{lem:F2};  $2u\in \Z$ when $u_j$'s are holomorphic cusp forms), 
this integral was studied in \cite{Byk96} and \cite{GZ99}. 
More recently \cite{Nel13} also used the results and method of  \cite{GZ99}. 
A similar approach has been used in \cite{HR19}.

This analysis leads to an exact expression for the first moment in terms of a main term plus precise error terms.   
These are given in Theorem~\ref{thm:first_nonholo} and Theorem~\ref{thm:upperbound_first}.   
The error terms are  related to  shifted Dirichlet series, 
which are bounded in Theorem~\ref{thm:upperbound_first}.  
The precise description of these error terms as  inverse Mellin transforms of  shifted Dirichlet series 
will be vital for our followup paper  \cite{HL20s} analyzing second moments of Rankin-Selberg convolutions.
The specific case where $\phi$ is an Eisenstein series leads to a second moment 
for the standard $L$-series of the Masss forms, 
which is described in Corollary~\ref{cor:first_eis}.  
Finally, in Corollary~\ref{cor:determine} we derive a consequence, uniform in level, spectral value and weight, 
showing that if two forms $\phi$, $\phi'$ have 
$\scrL(1/2, \phi\times \overline{u_j})= \scrL(1/2, \phi'\times \overline{u_j})$ for sufficiently many (but finitely many)
$j$, then $\phi = \phi'$.

The most recent work on first moments of Rankin-Selberg $L$-series that we are aware of is \cite{H20}, 
where for a fixed $g$ of level $M$, an estimate is made for the sum of $L(1/2,f \times g)$ with the sum over holomorphic cusp forms $f$ of fixed prime power level, relatively prime to $M$. 
This is used to find a subconvexity upper bound in the level aspect for the central value $L(1/2,f\times g)$ in the case that all the central values are non-negative, which occurs when $g$ is dihedral.  
This strengthens the results of a previous work \cite{HT14}

\begin{remark}
We would like to stress here that, 
the key to obtaining the precise expression for the spectral first moment 
as a main term plus error terms (which are shifted Dirichlet series) 
is the transformation of integrals done in \S\ref{ss:F12}. 
Goldfeld and Zhang \cite{GZ99} studied the first moment of the Rankin-Selberg convolution 
averaging over the holomorphic cusp forms with the same method. 

In addition to allowing optimal upper bound estimates, this precise description allows us,  in a following paper \cite{HL20s},f to take a sum over $n$ and build up a second moment formula from the first moment.   
When this is done, one of the error terms from the first moment builds a piece of the main term of the second moment.
\end{remark}

\begin{remark}
We assume the divisibility of the levels for simplicity. 
To remove this condition, one needs to consider other types of additive twists of the $L$-functions of $\phi$. 
For example, see \cite[Theorem~3.1]{DHKL}. 
\end{remark}

\par\vspace{2\jot}\noindent
\textbf{Acknowledgements}.\ 
The authors would like to thank Peter Humphries for some 
very helpful comments, and POSTECH for providing a welcoming working
environment during part of the preparation of this paper.

\subsection{Preliminaries}\label{ss:notation}
Let $\HH=\{x+iy\;:\; x\in \R, \; y>0\}$ be the Poincar\'e upper half plane 
and $\GL_2^+(\R)$ be the group of $2\times2$ matrices over $\R$ with positive determinants. 
The group $\GL_2^+(\R)$ acts on $\HH$ via the M\"obius transformation: 
for $g=\sm a & b\\ c & d\esm \in \GL_2^+(\R)$ and $z\in \HH$ we define $gz = \frac{az+b}{cz+d}$.
For a given integer $k$, the group $\GL_2^+(\R)$ acts on a function $\phi: \HH\to \C$ via the slash operator of weight $k$:
\begin{equation}
\phi|_k g (z) = \det(g)^{\frac{k}{2}} (cz+d)^{-k} \phi(gz). 
\end{equation}
When $k=0$ we write $\phi|\gamma (z)= \phi|_0 \gamma (z) = \phi(gz)$. 

Let $N$ be a positive integer and $\Gamma_0(N)$ be the congruence subgroup of $\SL_2(\Z)$: 
\begin{equation}
\Gamma_0(N) = \left\{\gamma=\bpm a & b\\ c& d\ebpm \in \SL_2(\Z)\;:\; c\equiv 0\bmod{N}\right\}. 
\end{equation}
Let $\phi$ be either a holomorphic cusp form of positive integral weight $k$, or a weight $0$ Maass form 
(cuspidal or Eisenstein series) of type $\nu\in \C$, 
for $\Gamma_0(N)$, with central character $\xi$ modulo $N$, i.e., 
$\phi$ is a real analytic function on $\HH$ satisfying
\begin{align}
&(\phi|_k\gamma)(z) = \xi(d) \phi(z) & \text{ for any }\gamma=\sm a & b\\ c& d\esm\in \Gamma_0(N), \\
&\Delta \phi = (1/4-\nu^2) \phi & \text{ when $\phi$ is a Maass form.}
\end{align}
Here $\Delta = -y^2\left(\frac{\partial^2}{\partial x^2} + \frac{\partial^2}{\partial y^2}\right)$ is the Laplace-Beltrami operator 
(with the negative sign). 
We further let $\nu=\frac{k-1}{2}$ when $\phi$ is a holomorphic cusp form of weight $k$. 

We now notate the Fourier expansion of $\phi$. 
When $\phi$ is a weight $k$ holomorphic cusp form, 
\begin{equation}\label{e:Fourierexp_holo}
\phi(z) = \sum_{n=1}^\infty c_\phi(n) e^{2\pi inz}
= \sum_{n=1}^\infty C_\phi(n)n^{\frac{k-1}{2}}  e^{2\pi inz}. 
\end{equation}
When $\phi$ is non-holomorphic, for simplicity we consider only weight $0$ automorphic forms 
 with the following Fourier expansion: 
\begin{equation}\label{e:Fourierexp_nonholo}
\phi(z) 
= c_\phi^+ y^{\frac{1}{2}+\nu} + c_\phi^- y^{\frac{1}{2}-\nu} 
+ \sum_{n=1}^\infty C_\phi(n) \sqrt{y} K_\nu(2\pi ny) e^{2\pi inx}
+ c_\phi(-1) \sum_{n=1}^\infty C_\phi(n) \sqrt{y} K_{\nu}(2\pi ny) e^{-2\pi in x}. 
\end{equation}
Here $K_\nu(y)$ is the classical $K$-Bessel function 
\begin{equation}
K_\nu(y) = \frac{1}{2}\int_0^\infty e^{-\frac{1}{2} y\left(u+u^{-1}\right)} u^{\nu} \; \frac{du}{u}. 
\end{equation}
We further assume that $\phi$ is a newform for level $N$, and is a Hecke eigenform.

\subsection{Spectral decomposition for $L^2(\Gamma_0(M)\bsl \HH)$}\label{ss:spectral_decomp}
Let $M$ be a positive integer. 
For any $\varphi_1, \varphi_2\in L^2(\Gamma_0(M) \bsl \HH)$, we define the Petersson inner product 
 for $\Gamma_0(M)\bsl \HH$: 
\begin{equation}
\left<\varphi_1, \varphi_2\right>_M = \iint_{\Gamma_0(M)\bsl \HH} \varphi_1(z)\overline{\varphi_2(z)} \; \frac{dx\; dy}{y^2}.
\end{equation}
The space $L^2(\Gamma_0(M)\bsl \HH)$, with respect to the Petersson inner product, 
can be decomposed into the eigenspaces  of the Laplace-Beltrami operator $\Delta$. 

Let $\{u_j\}_{j\geq 1}$ be an orthonormal basis of the cuspidal part of the space $L^2(\Gamma_0(M)\bsl \HH)$, 
which are Maass cusp forms with Laplace eigenvalue $s_j(1-s_j)$ for $s_j\in \C$.
Note that either $s_j=\frac{1}{2}+ir_j$ for $r_j\in \R$ or $\frac{1}{2}< s_j < 1$. 
We have the following Fourier expansion 
\begin{equation}
u_j(z) = \sum_{n\neq 0} \rho_j(n) \sqrt{y} K_{ir_j}(2\pi|n|y) e^{2\pi inx}. 
\end{equation}
We will assume that the $u_j$'s have been further diagonalized with respect to Hecke operators $T_n$ 
for $n\in \Z_{\geq 1}$, $\gcd(n, M)=1$,  
and let $\lambda_j(n)$ be the corresponding eigenvalue for $u_j$.
 When $u_j$ is a newform of level $M$, we have $\rho_j(\pm n)=\rho_j(\pm 1)\lambda_j(n)$ for $n\geq 1$. 


The Eisenstein series for $\Gamma=\Gamma_0(M)$ are indexed by the cusps $\cuspa\in \Q\cup\{\infty\}$.
For each cusp $\cuspa\in \Q\cup\{\infty\}$, let $\sigma_\cuspa\in \SL_2(\R)$, 
with $\sigma_\cuspa \infty = \cuspa$, 
be a scaling matrix for the cusp $\cuspa$, i.e., 
$\sigma_\cuspa$ is the unique matrix (up to right translations)
such that 
$\sigma_\cuspa \infty=\cuspa$
and 
\begin{equation}
\sigma_\cuspa^{-1}\Gamma_\cuspa \sigma_\cuspa 
= \Gamma_\infty = \left\{\left. \pm \bpm 1 & b\\ 0 & 1\ebpm \;\right|\; b\in \Z\right\},
\end{equation}
where
\begin{equation}
\Gamma_\cuspa 
= \left\{\gamma\in \Gamma\;|\; \gamma \cuspa = \cuspa\right\}.
\end{equation}
For a cusp $\cuspa$, define the Eisenstein series at the cusp $\cuspa$ to be
\begin{equation}\label{e:Ea}
E_\cuspa(z, s) 
=
\sum_{\gamma\in \Gamma_\cuspa \bsl \Gamma} \Im(\sigma_\cuspa^{-1}\gamma z)^s, 
\end{equation}
with the following Fourier expansion:
\begin{equation}\label{m:EisensteinSeries_fex}
E_\cuspa \left(z, s\right) 
= \delta_{\cuspa, \infty} y^{s}
+ \tau_\cuspa \left(s, 0\right) y^{1-s}
+ \sum_{n\neq 0} \tau_\cuspa \left(s, n\right) \sqrt y K_{s-\frac{1}{2}}(2\pi |n|y)e^{2\pi inx}.
\end{equation}
For explicit descriptions of the Eisenstein series $E_{\cuspa}(z, s)$ see \cite[Theorem~6.1]{Y19}.

\subsection{First moments}\label{ss:first}
We define a test function $h(t)$ satisfying the following conditions:
\begin{enumerate}
\item $h(t)$ is even;
\item $h(t)$ is holomorphic in the strip $|\Im(t)|\leq 1/2 +\epsilon$;
\item $h(t)\ll (|t|+1)^{-2-\epsilon}$ in the strip;
\item $h(\pm i/2)=0$. 
\end{enumerate}

For a positive integer $N$, let $\phi$ be an automorphic form of level $N$ with central character $\xi$ modulo $N$ as given in \S\ref{ss:notation}. 
We further assume that $\xi$ is induced from a primitive character $\xi_*$ with conductor $R\mid N$. 
For a positive integer $M$ divisible by $N$, we consider the following Rankin-Selberg convolutions, $\phi$ with 
the spectral basis of $L^2(\Gamma_0(M)\bsl \HH)$ given in \S\ref{ss:spectral_decomp}. 
For $\Re(s)>1$, 
\begin{equation}\label{e:RS_phiuj}
\scrL(s, \phi\times u_j)
= L^{(M)}(2s, \xi) \sum_{m=1}^\infty \frac{C_\phi(m)\rho_j(m)}{m^s}
\end{equation}
and 
\begin{equation}\label{e:RS_phiE}
\scrL_\cuspa (s, ir; \phi)
= L^{(M)}(2s, \xi) \sum_{m=1}^\infty \frac{C_\phi(m)\overline{\tau_\cuspa(1/2+ir, m)}}{m^s}. 
\end{equation}
Here 
\begin{equation}
L(2s, \xi)= \sum_{m=1}^\infty \frac{\xi(m)}{m^{2s}} = \prod_{p} (1-\xi(p)p^{-2s})^{-1}
\end{equation}
and 
\begin{equation}
L^{(M)}(2s, \xi) = L(2s, \xi\cdot 1_M) = L(2s, \xi) \prod_{p\mid M} (1-\xi(p)p^{-2s})
\end{equation}
where $1_M$ is the trivial character mod $M$. 

For each positive integer $n$, we define the first moment function
\begin{multline}\label{e:K_firstmoment_def}
K(s, \phi; n, h)
\\ = \sum_j \frac{h(r_j)}{\cosh(\pi r_j)} \rho_j(n) \scrL(s, \phi\times \overline{u_j}) 
+ \sum_{\cuspa} \frac{1}{4\pi} \int_{-\infty}^\infty \frac{h(r)}{\cosh(\pi r)} 
\tau_\cuspa(1/2+ir, n) \scrL_\cuspa(s, ir; \phi) \; dr.
\end{multline}
In Theorem~\ref{thm:first_holo} and Theorem~\ref{thm:first_nonholo}, 
we present the spectral first moment of Rankin-Selberg convolutions 
as separated ``main terms", some extra terms and Mellin inversions of the following shifted Dirichlet series. 
For $s\in \C$ and $m\in \Z$, let 
\begin{equation}\label{e:PM_xi*}
P_{M}(u, m; \xi_*) 
= \prod_{\substack{p\mid M, \\ p\nmid R}} 
\bigg\{-p^{-1-u}\xi_*(p)+ (1-p^{-1}) \sigma_u(p^{\ord_p(m)-\ord_p(M)}; \overline{\xi_*})\bigg\}, 
\end{equation}
\begin{equation}
\sigma_{1-2s}(m; \chi) = \sum_{d\mid m} d^{1-2s}\chi(d), 
\end{equation}
for a Dirichlet character $\chi$ 
and define 
\begin{equation}\label{e:sigma_xi*_M}
\sigma_{1-2s}(m; \xi_*, M)
= 
\sigma_{1-2s}(m; \xi_* \cdot 1_M) P_M(2s-1, m, \xi_*) 
\prod_{p\nmid M}\overline{\xi_*(p)}^{\ord_p(m)} \prod_{p\mid M} p^{(1-2s)\ord_p(m)} 
\end{equation}
for a non-zero integer $m$, which is divisible by $\frac{M}{R\prod_{p\mid M, p\nmid R} p}$. 
When $\frac{M}{R\prod_{p\mid M, p\nmid R} p}\nmid m$, then we define $\sigma_{1-2s}(m; \xi_*, M)=0$. 
 In particular, when $\xi$ is primitive then $\xi_*=\xi$, $R=M$ and for a non-zero integer $m$ with $\gcd(m, R)=1$, 
we have $P_{R}(2s-1, m; \xi_*)=1$ and 
\begin{equation}
\sigma_{1-2s}(m; \xi_*, M)
= \sigma_{1-2s}(m; \xi_*) \overline{\xi_*(m)}. 
\end{equation}
This $\sigma_{1-2s}(m; \xi_*, M)$ is the $m$th Fourier coefficient of an Eisenstein series of level $M$ 
and character $\xi$, at the cusp  $\frac{1}{M}$, which is equivalent to $\infty$ via $\Gamma_0(M)$ (see \cite{Y19}). 

We then define the following shifted Dirichlet series of $\phi$ and the Eisenstein series of level $M$ with the character $\xi_*$. 
For a positive integer $n$, 
\begin{equation}\label{e:Dphi}
D(s, it; \phi; n) = \sum_{m=1}^\infty \frac{C_\phi(m+n)(m+n)^{\nu} \sigma_{-2it}(m; \xi_*, M)m^{it}}{m^{s+\nu}}.
\end{equation}
Here we take $\nu=\frac{k-1}{2}$ when $\phi$ is a holomorphic cusp form of weight $k$, 
and when $\phi$ is non-holomorphic, $\frac{1}{4}-\nu^2$ is the Laplace eigenvalue for $\phi$. 
The series converges absolutely for $\Re(s)>1$. 
This is a generalization of a shifted Dirichlet series that was first defined and meromorphically continued in  \cite{HHR},
for two holomorphic forms of weight $k$.  
If $n$ were negative this would be a type of shifted Dirichlet series first defined by Selberg in \cite{Sel}, 
but as $n$ is positive it has very different analytic  properties. 

Now we describe $K(s, \phi; n, h)$ for $\Re(s)=\frac{1}{2}$ in the following two theorems. 
We let 
\begin{equation}\label{e:H0}
H_0(h) = \frac{1}{\pi^2} \int_{-\infty}^\infty h(r) r\tanh(\pi r) \; dr
\end{equation}
and 
\begin{equation}\label{e:Hk-1/2_h}
H_{\frac{k-1}{2}} (s; h) = \frac{1}{\pi^2} \int_{-\infty}^\infty h(r) r\tanh(\pi r) 
\frac{\Gamma\left(1-s+\frac{k-1}{2}+ir\right) \Gamma\left(1-s+\frac{k-1}{2}-ir\right)}
{\Gamma\left(s+\frac{k-1}{2}+ir\right)\Gamma\left(s+\frac{k-1}{2}-ir\right)}\; dr.
\end{equation}


\begin{theorem}\label{thm:first_holo}
Let $\phi$ be a holomorphic cusp form of weight $k$. 
For $\Re(s)\geq1/2$, we have 
\begin{equation}
K(s, \phi; n, h)
= M(s, \phi; n) + L_{1}^+(s, \phi; n) + L_1^-(s, \phi; n), 
\end{equation}
where 
\begin{multline}\label{e:M_holo}
M(s, \phi; n)
= L^{(M)}(2s, \xi) \frac{C_\phi(n)}{n^s} H_0(h)
\\ + \delta_{\xi=1_N} (2\pi)^{-2+4s}\zeta(2-2s) 
M^{1-2s} \prod_{p\mid M} (1-p^{-1}) \frac{C_\phi(n)}{n^{1-s}} 
H_{\frac{k-1}{2}}(s; h), 
\end{multline}
\begin{multline}\label{L1+}
L_{1}^+(s, \phi; n)
= i^k (2\pi)^{2s-1} R^{-2s} \tau(\xi_*)
\frac{4}{2\pi i} \int_{(\sigma_u)}\frac{h(u/i) u}{\cos(\pi u)} 
\frac{\sin\left(\pi\left(s-u+\frac{k-1}{2}\right)\right)}{\pi} 
\frac{\Gamma\left(1-s-\frac{k-1}{2}+u\right)}{\Gamma\left(s+ \frac{k-1}{2}+u\right)}
\\ \times 
\frac{1}{2\pi i } 
\int_{(\sigma_v)} \frac{\Gamma\left(1-s+ \frac{k-1}{2}+v\right) \Gamma\left(s+ \frac{k-1}{2}+v\right) \Gamma\left(u-v\right)}{\Gamma\left(1+u+v\right)}
n^v D(v+1/2, s-1/2; \phi; n)
\; dv \; du, 
\end{multline}
where $\Re(s) < \sigma_v< \sigma_u< 3/2$, 
and 
\begin{multline}\label{L1-}
L_1^-(s, \phi; n) 
= i^k (2\pi)^{2s-1} R^{-2s} \tau(\xi_*)
\frac{\cos(\pi(s-\frac{k-1}{2}))}{\pi} 
\frac{4}{2\pi i}\int_{C} 
\frac{h(u/i)u \tan(\pi u)}{\Gamma\left(s-u+\frac{k-1}{2}\right)\Gamma\left(s+u+\frac{k-1}{2}\right)}
\\ \times 
\frac{1}{2\pi i} \int_{(\sigma_v)} \Gamma\left(u-v\right) \Gamma\left(-u-v\right) 
\Gamma\left(s+\frac{k-1}{2}+v\right) \Gamma\left(1-s+\frac{k-1}{2}+v\right)
\\ \times n^v \sum_{m=1}^{n-1} \frac{C_\phi(n-m)(n-m)^{\frac{k-1}{2}} \sigma_{1-2s}(m; \xi_*, M)}{m^{1-s+v+\frac{k-1}{2}}} 
\; dv \; du, 
\end{multline}
where $0< \sigma_u< \frac{1}{2}$ and $C$ is the contour that separates the poles of the gamma functions. 
\end{theorem}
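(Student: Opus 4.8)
The plan is to follow the pipeline outlined in the introduction. I would work first in the region $\Re(s)>1$, where the Rankin--Selberg series \eqref{e:RS_phiuj} converges absolutely, and analytically continue the resulting identity to $\Re(s)\geq1/2$ only at the end. Substituting \eqref{e:RS_phiuj} into \eqref{e:K_firstmoment_def} and interchanging the $m$-summation with the spectral average, the inner quantity $\sum_j \frac{h(r_j)}{\cosh(\pi r_j)}\rho_j(n)\overline{\rho_j(m)}$ together with its continuous-spectrum analogue is exactly the spectral side of the Bruggeman--Kuznetsov trace formula \eqref{e:Kuznetsov} for $\Gamma_0(M)$. Its geometric side contributes a diagonal term proportional to $\delta_{m=n}\cdot\frac1{\pi^2}\int_{-\infty}^\infty h(r)\,r\tanh(\pi r)\,dr=\delta_{m=n}H_0(h)$, which after multiplying by $L^{(M)}(2s,\xi)C_\phi(m)m^{-s}$ and summing over $m$ produces the first line of \eqref{e:M_holo}; and a Kloosterman term $K_0$, a sum over moduli $c$ with $M\mid c$ of Kloosterman sums $S(n,m;c)$ paired against the $J$-Bessel transform of $h$, as in \eqref{e:K0_open}.

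The heart of the argument is the treatment of $K_0$, which I carry out following \S\ref{ss:L-K-B}. First insert the Mellin--Barnes representation of $J_{2ir}$ to detach the $m$-dependence, open the Kloosterman sum, and recognize the inner $m$-sum as a linear combination of additive twists $\sum_m C_\phi(m)e(ma/c)m^{-s'}$ of the $L$-function of $\phi$. Because $\phi$ is a newform of level $N$ with $N\mid M\mid c$, each such twist is meromorphically continued and obeys the functional equation of Proposition~\ref{prop:L_fe}; its root number, the Gauss sum $\tau(\xi_*)$, the conductor $R$, and the archimedean factor $i^k(2\pi)^{2s-1}$ supply the prefactors in \eqref{L1+}--\eqref{L1-}. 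After applying the functional equation, re-expanding the dual twist into Fourier coefficients $C_\phi(\ell)$, and summing the leftover exponential sums over the Kloosterman variable and over $c$ (these collapse to character-twisted Ramanujan sums, whose $c$-series evaluate to the arithmetic factors $\sigma_{1-2s}(\cdot\,;\xi_*,M)$ with their local correction $P_M$, along with $L^{(M)}(2s,\xi)$ and $\zeta(2-2s)$), the output separates according to the sign of $\ell-n$: the terms with $\ell>n$ reassemble, via \eqref{e:Dphi}, into $D(v+1/2,\,s-1/2;\phi;n)$ inside $L_1^+$; the terms with $0<\ell<n$ give the finite sum $\sum_{m=1}^{n-1}C_\phi(n-m)(n-m)^{\frac{k-1}{2}}\sigma_{1-2s}(m;\xi_*,M)\cdots$ inside $L_1^-$; and the contribution concentrated at $\ell=n$ — for which the character-twisted sum over the Kloosterman variable vanishes unless $\xi=1_N$, and then produces $\zeta(2-2s)$, the weight-$k$ archimedean integral $H_{\frac{k-1}{2}}(s;h)$, and the product $M^{1-2s}\prod_{p\mid M}(1-p^{-1})$ — gives the second line of \eqref{e:M_holo}.

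What is left is to organize the two Mellin--Barnes integrations: the variable $u$, carrying $h(u/i)$ and the trigonometric factors $\cos(\pi u)$, $\tan(\pi u)$, which comes from the Bessel inversion, and the variable $v$ used to display the continued twists. The $(u,v)$-integrands that arise are precisely the Mellin-type integrals \eqref{e:F1_def} and \eqref{e:F2_def} evaluated in Lemma~\ref{lem:F1} and Lemma~\ref{lem:F2}; inserting those evaluations and collecting the gamma quotients produces the integrands of \eqref{L1+} and \eqref{L1-}. One then checks that with the contours placed as stated — $\Re(s)<\sigma_v<\sigma_u<3/2$ for $L_1^+$, and $0<\sigma_u<1/2$ with $C$ separating the poles of $\Gamma(u-v)\Gamma(-u-v)$ from those of $\Gamma(s+\frac{k-1}{2}+v)\Gamma(1-s+\frac{k-1}{2}+v)$ for $L_1^-$ — every interchange of summation and integration is justified, which gives the claimed identity for $\Re(s)>1$. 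Finally both sides continue holomorphically to $\Re(s)\geq1/2$ using the meromorphic continuation of $D$ recorded in \S\ref{ss:L-K-B}, completing the proof; the parallel non-holomorphic case is Theorem~\ref{thm:first_nonholo}.

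I expect the main obstacle to be not the Bessel and gamma-function bookkeeping but the passage through the additive-twist functional equation at the imprimitive modulus $c$: one must track every local factor at primes $p\mid M$, carefully distinguishing $p\mid R$ from $p\mid M$, $p\nmid R$, in order to obtain exactly the Eisenstein Fourier coefficient $\sigma_{1-2s}(m;\xi_*,M)$ with its correction polynomial $P_M(2s-1,m;\xi_*)$ from \eqref{e:PM_xi*}, and to verify that the $\ell=n$ contribution is precisely $\delta_{\xi=1_N}$ times the second main term. Keeping this arithmetic exact, rather than up to an unspecified Euler product, is what makes the error terms $L_1^{\pm}$ usable as the seed of the second-moment formula in \cite{HL20s}.
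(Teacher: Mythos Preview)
Your proposal is correct and follows essentially the same route as the paper: open the Rankin--Selberg series, apply Bruggeman--Kuznetsov, write $K_0$ via the Mellin inversion of $J_{2u}$ as a contour integral of $L_q(s,u;\phi;n)$, open the Kloosterman sum into additive twists, apply the functional equation of Proposition~\ref{prop:L_fe}, split the resulting $m$-sum into $m>n$, $m<n$, $m=n$, identify the integrands with $F_2$ via Lemma~\ref{lem:F2} and Corollary~\ref{cor:F2}, and finally sum over $q$ using Lemma~\ref{lem:sum_cxiq} to produce $\sigma_{1-2s}(\cdot;\xi_*,M)$ and the $\delta_{\xi=1_N}$ main-term piece. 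One small correction: the extension to $\Re(s)\geq 1/2$ does not require any meromorphic continuation of $D$; once the functional equation has been applied and the contours placed at $\Re(s)<\sigma_v<\sigma_u<3/2$, the series $D(v+1/2,s-1/2;\phi;n)$ sits in its region of absolute convergence and every term in the decomposition is already well defined there, so the identity simply persists.
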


When $\phi$ is not holomorphic, we have a similar formula, with extra terms $M^\pm$ and $L_2$, 
as described in the following theorem.    
When $\nu\notin \frac{1}{2} \Z$, define 
\begin{equation}\label{e:Hnu_h}
H_\nu(s; h) 
= \frac{4}{\pi} \frac{1}{2\pi i} \int_{(\sigma_u)}h(u/i) u
\frac{\cos(\pi(s-u))}{\cos(\pi u)} 
\frac{\Gamma\left(1-s-\nu+u\right) \Gamma\left(1-s+\nu+u\right)}
{\Gamma\left(s+\nu+u\right)\Gamma\left(s-\nu+u\right)}
\; du, 
\end{equation}
for $\frac{1}{2}\leq \Re(s) < \sigma_u  < \frac{3}{2}$. 
We remark that if $\phi$ were an Eisenstein series there would be an extra term contributed from the continuous spectrum after analytically continuing back to $\Re(s)=1/2$.   
We will give this extra term explicitly in Corollary~\ref{cor:first_eis} 
(when $\phi$ is taken as the Eisenstein series of level $1$), 
even though it turns out to be on the same order as the error term.
\begin{theorem}\label{thm:first_nonholo}
Let $\phi$ be a Maass form of type $\nu$. 
For $\frac{1}{2}\leq \Re(s)< \frac{3}{2}$, we have 
\begin{multline}
K(s, \phi; n, h)
= M(s, \phi; n) + M^+(s, \phi; n) + M^-(s, \phi; n)
\\ + L_{1}^+(s, \phi; n) + L_1^-(s, \phi; n) + L_2(s, \phi; n), 
\end{multline}
where 
\begin{multline}\label{e:Mdef_nonholo}
M(s, \phi; n)
= L^{(M)}(2s, \xi) \frac{C_\phi(n)}{n^s} H_0(h)
\\ -\delta_{\xi=1_N}\zeta(2-2s) (2\pi)^{4s-3} 
M^{1-2s} \prod_{p\mid M}(1-p^{-1}) \frac{C_\phi(n)}{n^{1-s}} \frac{\pi }{\sin(\pi s) }H_\nu(s; h).
\end{multline}
Here $M^\pm(s, \phi; n)$, $L_1^\pm(s, \phi; n)$ and $L_2(s, \phi; n)$ 
are given in \eqref{e:M+}, \eqref{e:L1+nonholo_holo}, \eqref{e:L1-_withF1} and \eqref{e:L2} respectively.  
Note that when $\phi$ is cuspidal, $M^\pm(s, \phi; n) =0$.  
\end{theorem}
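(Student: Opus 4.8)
The plan is to start from the spectral side of $K(s,\phi;n,h)$ with $\Re(s)>1$, where everything converges absolutely, open up the Rankin-Selberg convolution $\scrL(s,\phi\times\overline{u_j})$ using \eqref{e:RS_phiuj} together with the Hecke relation $\rho_j(\pm m)=\rho_j(\pm 1)\lambda_j(m)$, and bring the sum over $j$ (and the cuspidal/Eisenstein integral) into the shape to which the Bruggeman--Kuznetsov trace formula \eqref{e:Kuznetsov} applies. After inserting the trace formula, the geometric side produces the diagonal term (which will become $L^{(M)}(2s,\xi)\frac{C_\phi(n)}{n^s}H_0(h)$) plus a sum over moduli $c$ of Kloosterman sums weighted by a $J$-Bessel integral transform of $h$; this is the expression \eqref{e:K0_open}. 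First I would isolate the $J$-Bessel integral and apply Mellin inversion to $J_{2\nu}$, turning the $c$-sum into a Dirichlet series in $c$ times $m^{w}$-type factors, and then rearrange to recognize the inner sum over $m$ as a linear combination of additive twists $L_\phi$ of the $L$-function of $\phi$ evaluated along a vertical line.

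The second stage is to push the contour and invoke the meromorphic continuation and functional equation of the additive twist $L$-function from Proposition~\ref{prop:L_fe}; because $\phi$ is a newform and the levels are divisible, the twisting moduli that appear are exactly of the form handled there (this is where the factor $R^{-2s}\tau(\xi_*)$ and the arithmetic factors $P_M$, $\sigma_{1-2s}(\,\cdot\,;\xi_*,M)$ enter). Applying the functional equation swaps the additive twist by its dual and introduces the gamma-factor ratios. At this point the expression has become a double Mellin-Barnes integral in variables $u$ (coming from the Bessel/test-function transform) and $v$ (coming from the additive-twist variable), with integrand a product of gamma functions times either the shifted Dirichlet series $D(v+1/2,s-1/2;\phi;n)$ of \eqref{e:Dphi} (the ``$m\geq 1$'' range, contributing $L_1^+$) or the finite sum $\sum_{m=1}^{n-1}$ (the ``complementary'' range, contributing $L_1^-$); in the non-holomorphic case a third range survives and yields $L_2$, and the non-cuspidal constant terms of $\phi$ feed $M^\pm$. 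I would then perform the transformation of the $u$-integrals described in \S\ref{ss:F12} --- identifying the relevant integrals with $F_1$ and $F_2$ of \eqref{e:F1_def}, \eqref{e:F2_def} and evaluating them via Lemma~\ref{lem:F1} and Lemma~\ref{lem:F2} --- to bring $L_1^\pm$ (and $L_2$, $M^\pm$) into exactly the closed forms claimed.

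The final stage is analytic continuation in $s$ from $\Re(s)>1$ down to $\tfrac12\le\Re(s)<\tfrac32$. Each individual term must be checked to continue: the main term $M(s,\phi;n)$ picks up its second piece (the $\delta_{\xi=1_N}\zeta(2-2s)$ term with $H_{\frac{k-1}{2}}$ or $\frac{\pi}{\sin\pi s}H_\nu$) as the residue of a pole crossed during continuation --- coming from the pole of the Eisenstein/zeta factor, or equivalently of $L(2s,\xi)$ when $\xi$ is trivial --- while the Mellin-Barnes integrals $L_1^\pm$ continue holomorphically provided the contours $\sigma_u,\sigma_v$ are kept in the stated ranges $\Re(s)<\sigma_v<\sigma_u<3/2$ (resp.\ $0<\sigma_u<1/2$ with $C$ separating the gamma poles), which forces the contour-bookkeeping to be done carefully. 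In the non-holomorphic Eisenstein-$\phi$ subcase one extra continuous-spectrum term appears, deferred to Corollary~\ref{cor:first_eis}.

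\textbf{Main obstacle.} The crux is the transformation of the $u$-integrals in \S\ref{ss:F12}: recognizing the Bessel/test-function Mellin data as precisely $F_1$ and $F_2$, and showing via Lemma~\ref{lem:F1} and Lemma~\ref{lem:F2} that these collapse to the clean gamma-quotient kernels appearing in \eqref{L1+}--\eqref{L1-} (and in \eqref{e:Hnu_h}), rather than to some messier hypergeometric expression. Everything else --- opening the convolution, applying Kuznetsov, Mellin-inverting the Bessel function, and invoking the additive-twist functional equation --- is bookkeeping by comparison, though keeping track of the arithmetic factors $P_M$, $\tau(\xi_*)$, $R$, and the precise definition of $\sigma_{1-2s}(m;\xi_*,M)$ through the functional equation, and ensuring all contour shifts are justified, will require care.
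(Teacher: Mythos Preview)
Your roadmap matches the paper's proof essentially step for step: open the Rankin--Selberg series, apply Kuznetsov to reach \eqref{e:K0_open}, Mellin-invert the $J$-Bessel to land on \eqref{e:Lq_open2}, apply Proposition~\ref{prop:L_fe}, and then identify the resulting Mellin integrals with $F_1,F_2,F_3$ via Lemmas~\ref{lem:F1} and~\ref{lem:F2} (this is the content of Proposition~\ref{prop:Lq_nonholo}); the sum over $q$ is then executed via Lemma~\ref{lem:sum_cxiq}. Your identification of \S\ref{ss:F12} as the crux is exactly right.

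Two specific corrections, however. First, the second piece of $M(s,\phi;n)$ (the $\zeta(2-2s)$ term) is \emph{not} a residue picked up while analytically continuing in $s$. It arises earlier, as the $m=n$ term in the sum $\sum_m C_\phi(m)\,c_{\xi|q}(n-m)\,m^{s-1}F_1(s,u,\nu;m/n)$ that appears after the functional equation: when $m=n$ and $\xi=1_N$ one has $c_{\xi|q}(0)=\varphi(q)$, and $F_1(s,u,\nu;1)$ is evaluated in closed form by \eqref{e:F1_x=1}. Summing $\varphi(q)q^{-2s}$ over $q\equiv 0\pmod M$ via \eqref{e:sum_varphi} produces $\zeta(2s-1)$, which becomes $\zeta(2-2s)$ after the Riemann functional equation (Lemma~\ref{lem:M0_nonholo}). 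If you look for this term as a pole crossed in $s$, you will not find it.

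Second, $L_2$ does not come from a ``third range'' in the $m$-sum. It is the contribution of the second term in the non-holomorphic functional equation \eqref{e:L_fe_nonholo}, namely the piece $c_\phi(-1)\tfrac{\cos(\pi\nu)}{\pi}L(1-s,\phi;+d/q)$ (note the sign of the twist). The three-way split $m\ge n+1$, $m=n$, $1\le m\le n-1$ is applied only to the first term of \eqref{e:L_fe_nonholo} and yields $L_{1}^+$, $M_0$, and $L_1^-$ respectively; $L_2$ sits alongside this split, not inside it.
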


\begin{remark}
The function $L_1^+(s, \phi; n)$ and $L_2(s, \phi; n)$ are Mellin inversions of shifted Dirichlet series \eqref{e:Dphi} 
of $\phi$ and an Eisenstein series of level $M$ associated with a Dirichlet character $\xi$. 
For $L_2$ the role of $\phi$ and the Eisenstein series is switched. 
The functions $L_1^- (s, \phi; n)$ is a Mellin inversion of a short shifted sum. 
As mentioned above, these shifted sums, although an error term in the first moment, 
become a vital part of the analysis in a further application of this first moment formula to a second moment \cite{HL20s}.
In particular, after a spectral decomposition, the sum over $n$ of $L_1^+(s, \phi; n)$ becomes part of the main term in the second moment, 
although the sum over $n$ of $L_1^- (s, \phi; n)$ remains part of the error term.
\end{remark}

\begin{remark}
When $\xi$ is the trivial character modulo $N$, since $\frac{1}{\sin(\pi s)}$ has a pole at $s=1$, 
the term $M(s, \phi; n)$ also has a pole at $s=1$ with the residue 
\begin{equation}
\Res_{s=1} M(s, \phi; n)
= 2\pi \frac{\prod_{p\mid M}(1-p^{-1})}{M} C_\phi(n) h(\nu/i). 
\end{equation}
As expected from the spectral side: when $\phi$ is a cuspidal newform of level $N$, 
since $N\mid M$, there exists $j_0$ such that $\nu=ir_{j_0}$, induced from a newform of level $N$ which is a constant multiple of $\phi$. 
Then the Rankin-Selberg convolution $\scrL(s, \phi\times \overline{u_{j_0}})$ has a pole at $s=1$, 
so the spectral first moment function $K(s, \phi; n, h)$ 
has a pole at $s=1$ with the residue
\begin{equation}
\Res_{s=1} K(s, \phi; n, h)
= \frac{h(r_{j_0})}{\cosh(\pi r_{j_0})} \rho_{j_0}(n) \Res_{s=1}\scrL(s, \phi\times \overline{u_{j_0}}). 
\end{equation}
These poles must cancel, meaning that the residues at the pole must be equal.  
This is not obvious at first. 
For example, when $N=M$,  then $u_{j_0} = \rho_{j_0}(1) \phi$ and 
$|\rho_{j_0}(1)|^2$ is equal to $(L^*(1, \sym^2 u_{j_0}))^{-1}$, the inverse of the completed symmetric square $L$-series, multiplied by a constant depending on the level.   
Thus the two symmetric square $L$-series in the numerator and denominator cancel, and the $\cosh(\pi r_{j_0})$ in the denominator cancels the inverse of the gamma functions in the completed symmetric square.  
The equality of the remaining constant can be checked.
When $N\mid M$ and $N<M$, then the relation is similar but a little bit more complicated.  
\end{remark}

By choosing $\phi(z) = \frac{1}{2} E^*(z, 1/2+it)$, the complete Eisenstein series of level $1$, 
(see \eqref{e:Eis_1}), for $t\in \R$
we obtain an explicit formula for the spectral second moments of $L$-functions of Maass cusp forms for $\SL_2(\Z)$, 
as given in the corollary below. 
Our method works for arbitrary $N$, and it is possible to work out a similar formula in full generality, 
but in this corollary we restrict ourselves to level $1$ for simplicity. 
\begin{corollary}\label{cor:first_eis}
Take $\phi(z) = \frac{1}{2} E^*(z, 1/2+it)$.
For any positive integer $n$, 
\begin{multline}\label{e:first_eis_level1}
\sum_j \frac{h(r_j)}{\cosh(\pi r_j)} |\rho_j(1)|^2 \lambda_j(n) 
L(1/2+it, u_j) L(1/2-it, u_j) 
\\ + \frac{1}{4\pi } \int_{-\infty}^\infty \frac{h(r)}{\cosh(\pi r)} 
\frac{4\zeta(1/2+it+ir)\zeta(1/2+it-ir)\zeta(1/2-it+ir)\zeta(1/2-it-ir)}{\zeta^*(1+2ir)\zeta^*(1-2ir)}
\sigma_{-2ir}(n)n^{ir} \; dr
\\= M(1/2, \phi; n) + M^+(1/2, \phi; n) + M^-(1/2, \phi; n)
-2\bigg\{h((1/2+ it)/i)\frac{\zeta(1+2it)}{\zeta(2+2it)}
+h((1/2-it)/i)\frac{\zeta(1-2it)}{\zeta(2-2it)}\bigg\}
\\ + L_{1}^+(1/2, \phi; n) + L_1^-(1/2, \phi; n) + L_2(1/2, \phi; n), 
\end{multline}
where 
\begin{multline}\label{e:M_Eis_s=1/2}
M(1/2, \phi; n) 
= \frac{\sigma_{-2it}(n)n^{it}}{\sqrt{n}} 
\frac{1}{\pi^2} \int_{-\infty}^\infty h(r) r \tanh(\pi r) 
\\ \times 
\frac{1}{2} \bigg(\psi\left(\frac{1}{2}+it+ir\right) + \psi\left(\frac{1}{2}+it-ir\right)
+ \psi\left(\frac{1}{2}-it+ir\right) + \psi\left(\frac{1}{2}-it-ir\right)\bigg)
\; dr
\\ + \frac{\sigma_{-2it}(n)n^{it}}{\sqrt{n}} \big(-\log((2\pi)^2 n) + 2\gamma_0 \big) H_0(h)
\end{multline}
and 
\begin{multline}\label{e:Mpm_s=1/2}
M^\pm (1/2, \phi; n)
= 2\frac{\zeta(1\pm 2it) (2\pi)^{-1\mp 2it}}{\Gamma\left(1\pm it\right)\Gamma\left(\mp it\right)}
\sigma_{0}(n) n^{-\frac{1}{2}\mp it} 
\\ \times \frac{1}{\pi^2} \int_{-\infty}^\infty h(r)r \tanh(\pi r) 
\Gamma\left(\frac{1}{2}\pm it+ir\right) \Gamma\left(\frac{1}{2}\pm it-ir\right)\; dr.
\end{multline}
Here $\gamma_0$ is the Euler-Mascheroni constant and $\psi(x) = \frac{\Gamma'}{\Gamma}(x)$ is the digamma function. 
The Dirichlet series associated with $L_1^+$ is 
\begin{equation}
D(1/2+v, 0; \phi; n) = \sum_{m=1}^\infty \frac{\sigma_{-2it}(m+n)(m+n)^{it} \sigma_0(m)}{m^{\frac{1}{2}+v+it}}. 
\end{equation}
\end{corollary}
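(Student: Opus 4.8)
The plan is to derive Corollary~\ref{cor:first_eis} directly from Theorem~\ref{thm:first_nonholo} by specializing $\phi(z)=\tfrac12 E^*(z,1/2+it)$, the complete level-$1$ Eisenstein series, and carefully taking the limit $\nu\to it$. Since $E^*(z,1/2+it)$ is an Eisenstein series of level $N=1$ with trivial central character $\xi=1_1$, we have $R=1$, $\tau(\xi_*)=1$, $\xi_*=1$, and the completed Eisenstein series has Fourier coefficients proportional to $\sigma_{-2it}(m)m^{it}$ (up to $\zeta^*$-factors absorbed into the normalization of $E^*$), so that $C_\phi(m)=\sigma_{-2it}(m)m^{it}/\sqrt{\cdots}$ and the ``newform'' coefficients $c_\phi^\pm$ produce the terms with $\zeta(1\pm 2it)$. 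First I would record the Fourier expansion of $\tfrac12 E^*(z,1/2+it)$ in the normalization \eqref{e:Fourierexp_nonholo}, identifying $C_\phi(m)$, $c_\phi^+$, $c_\phi^-$, and $\nu=it$ explicitly; this makes the spectral side of \eqref{e:K_firstmoment_def} equal, via the Rankin-Selberg unfolding \eqref{e:RS_phiuj} together with the factorization of $\scrL(s,E^*(\cdot,1/2+it)\times \overline{u_j})$ into a product of four shifted $\zeta$-values divided by the symmetric square, precisely the left-hand side of \eqref{e:first_eis_level1} (the continuous-spectrum term comes from \eqref{e:RS_phiE} evaluated at level $1$, where there is a single cusp and $\tau_\infty(1/2+ir,n)=\sigma_{-2ir}(n)n^{ir}/\zeta^*(1+2ir)$ up to the standard factor).

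Next I would substitute the identified data into the right-hand side of Theorem~\ref{thm:first_nonholo}. The term $M(s,\phi;n)$ at $s=1/2$ needs the limit of the $\delta_{\xi=1_N}$ piece: $\zeta(2-2s)(2\pi)^{4s-3}M^{1-2s}\prod_{p\mid M}(1-p^{-1})\frac{C_\phi(n)}{n^{1-s}}\frac{\pi}{\sin(\pi s)}H_\nu(s;h)$ has, at $M=1$ and $s\to 1/2$, the product $\zeta(2-2s)\frac{\pi}{\sin\pi s}$ whose behavior near $s=1/2$ combined with the $\nu\to it$ limit of $H_\nu(s;h)$ in \eqref{e:Hnu_h} must be computed; the claim that $M(1/2,\phi;n)$ takes the digamma form \eqref{e:M_Eis_s=1/2} suggests that a derivative in $s$ (or the Laurent expansion of $\zeta(2-2s)=\frac{1}{2(1-s)}+\gamma_0+\cdots$ against a vanishing factor from $H_\nu$) produces the logarithmic derivative $\psi$ of the four gamma factors, plus the $-\log((2\pi)^2 n)+2\gamma_0$ constant. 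The terms $M^\pm(s,\phi;n)$ from \eqref{e:M+} carry the $c_\phi^\pm$ coefficients, which for $E^*$ are $\zeta^*(1\pm 2it)$-type quantities; after the normalization this gives \eqref{e:Mpm_s=1/2}. The extra bracketed term $-2\{h((1/2+it)/i)\zeta(1+2it)/\zeta(2+2it)+h((1/2-it)/i)\zeta(1-2it)/\zeta(2-2it)\}$ is exactly the ``extra term contributed from the continuous spectrum after analytically continuing back to $\Re(s)=1/2$'' flagged in the remark preceding Theorem~\ref{thm:first_nonholo}: it is the residue picked up when moving the $s$-contour past $s=1$ (or a pole of the additive-twist $L$-function of $\phi=\tfrac12 E^*$, which itself has a pole), and I would isolate it by tracking the pole of $\scrL_\cuspa(s,ir;\phi)$ — equivalently of the $\zeta$-factors in the RS convolution of two Eisenstein-type objects — during the analytic continuation of Theorem~\ref{thm:first_nonholo} from $\Re(s)$ large to $\Re(s)=1/2$.

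Finally, $L_1^\pm$ and $L_2$ specialize by substituting $\sigma_{1-2s}(m;\xi_*,M)=\sigma_{1-2s}(m;1,1)=\sigma_{1-2s}(m)$ at $s=1/2$, so $\sigma_0(m)$, into \eqref{L1+}, \eqref{L1-}, \eqref{e:L2}; the shifted Dirichlet series \eqref{e:Dphi} becomes $D(1/2+v,0;\phi;n)=\sum_{m\geq 1}\sigma_{-2it}(m+n)(m+n)^{it}\sigma_0(m)m^{-1/2-v-it}$ after inserting $C_\phi(m+n)(m+n)^\nu=\sigma_{-2it}(m+n)(m+n)^{it}$ and $\nu=it$, which is the displayed formula; here the $i^k$ and $R^{-2s}\tau(\xi_*)$ prefactors are all $1$. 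The main obstacle I anticipate is the delicate $\nu\to it$, $s\to 1/2$ double limit in the $M$ and $M^\pm$ terms: $H_\nu(s;h)$ in \eqref{e:Hnu_h} is defined only for $\nu\notin\tfrac12\Z$ and, more importantly, the $\frac{\pi}{\sin\pi s}$ in \eqref{e:Mdef_nonholo} has a pole at $s=1$ but one also needs its interaction with $\zeta(2-2s)$ and with the limiting gamma-quotient in $H_\nu$ at $s=1/2$ to see the cancellation that turns a singular-looking expression into the finite digamma integral \eqref{e:M_Eis_s=1/2}; getting the constant $2\gamma_0-\log((2\pi)^2 n)$ exactly right requires carrying the Laurent expansions of both $\zeta(2-2s)$ and the $\Gamma$-quotients to first order. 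The rest is bookkeeping: matching each surviving term of Theorem~\ref{thm:first_nonholo} against the corresponding term of \eqref{e:first_eis_level1} and checking the continuous-spectrum residue term is accounted for.
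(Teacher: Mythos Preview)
Your plan matches the paper's proof in outline and in spirit, but two points deserve correction. First, there is no ``$\nu\to it$'' limit to take: the form $\phi=\tfrac12 E^*(z,1/2+it)$ has type $\nu=it$ from the outset, and the digamma form of $M(1/2,\phi;n)$ arises purely from the $s\to 1/2$ limit of \eqref{e:Mdef_nonholo}; this computation is precisely Lemma~\ref{lem:M_nonholo_t=0} (with $M=1$, $\nu=it$), which the paper simply quotes. Second, the extra bracketed term does not come from moving an $s$-contour past $s=1$. Rather, in the continuous-spectrum integral $K_{\cont}(s,\phi;n,h)$ one writes $z=ir$ and notes that as $\Re(s)$ decreases from $>1$ toward $1/2$ the poles of $\zeta(s\pm it - z)$ in $z$ cross the line $\Re(z)=0$; the paper bends the $z$-contour rightward over these poles (collecting residues with a minus sign), continues $s$ to $\Re(s)=1/2$, and then bends the $z$-contour back, which doubles the residue contribution and yields exactly the $-2\{\cdots\}$ term. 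Otherwise your bookkeeping for the spectral side, $M^\pm$, $L_1^\pm$, $L_2$, and the shifted Dirichlet series is correct.
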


\begin{remark}
By choosing the test function $h=h_{T, \alpha}$ as in \eqref{e:testh_f} below, 
the error terms satisfy the following bound:
\begin{equation}
L_{1}^+(1/2, \phi; n) + L_1^-(1/2, \phi; n) + L_2(1/2, \phi; n)
= O_{\epsilon}(n^{\frac{1}{2}+\theta+\epsilon} T^{1+\epsilon}), 
\end{equation}
as $T\to \infty$. 
Here we assume that $|t| \ll T^{1-\epsilon}$ for some $\epsilon>0$. 
For the asymptotic behaviour of the main term $M(1/2, \phi; n)$ and $M^\pm(1/2, \phi; n)$ see Theorem~\ref{thm:upperbound_first} below. 
Also, the extra term from the continuous spectrum, moved to the right hand side of \eqref{e:first_eis_level1}, 
is absorbed in the error term.
\end{remark}

For our applications, we take the following test function:   
Fix some $C \gg 1$, 
$0< \alpha \leq 1$ and $T \gg 1$, and set
\begin{equation}\label{e:testh_f}
h(r) = h_{T, \alpha} (r) = \left(e^{-\left(\frac{r-T}{T^\alpha} \right)^2}+ e^{-\left(\frac{r+T}{T^\alpha} \right)^2}\right)
\frac{r^2+\frac{1}{4}}{r^2+C}. 
\end{equation}
Then we have 
\begin{theorem}\label{thm:upperbound_first}
We follow the same set-up as in Theorem~\ref{thm:first_holo} and Theorem~\ref{thm:first_nonholo}
and assume that $\phi$ is either a holomorphic cusp form or a Maass cusp form with the trivial character modulo $N$. 

In a formula where $k$ appears and $\phi$ is non-holomorphic, we will set $k = 0$. 
For any positive integer $n$, we have the following formula for the first moment of the Rankin-Selberg convolution:
\begin{multline}\label{e:first_est}
\sum_{j} \frac{h_{T, \alpha} (r_j)}{\cosh(\pi r_j)} \overline{\rho_j(n)} \scrL(1/2+it, \phi\times u_j) 
+ \sum_{\cuspa} \frac{1}{4\pi} \int_{-\infty}^\infty \frac{h_{T, \alpha} (r)}{\cosh(\pi r)} 
\tau_\cuspa(1/2+ir, n) \scrL_\cuspa(1/2+it, ir; \phi) \; dr
\\ = M(1/2+it, \phi; n) 
+ \begin{cases}
O_{\epsilon} (M^{\epsilon}2^{\frac{k}{2}} n^{\frac{1}{2}+\epsilon} T^{\max\{1, \alpha+\beta\}+\epsilon}) & \text{ if } \phi \text{ is holomorphic}, \\
O_{\epsilon} (M^{\epsilon} n{^{\frac12+\theta+\epsilon}} T^{\max\{1, \alpha+\beta\}+\epsilon})& \text{ if }  \phi \text{  is non-holomorphic},
\end{cases}
\end{multline}
where $\theta$ denotes the best progress toward the Ramanujan-Petersson conjecture 
and we write $|t| = T^{\beta}$, with $ \beta <1$, taking $\beta=-\infty$ when $t=0$. 

The error may be written more precisely as 
\begin{equation}
M^\epsilon n^\epsilon \left(T^{\alpha +\beta +\epsilon}\sum_{m=1}^{n-1} \frac{|C_\phi(n-m)|}{m^{1/2+\epsilon}}
+T^{1+\epsilon} 2^{\frac{k}{2}} \sum_{m=1}^{n} \frac{|C_\phi(n+m)|}{m^{1/2+\epsilon}}
+T^{-A}  n \sum_{m>n} \frac{|C_\phi(n+m)|}{m^{3/2+\epsilon}} \right)
\end{equation}
for arbitrarily large $A$. 
The implied constant in the estimate is independent of $M$, $n$ and $T$ but depends on $\epsilon$ and $A$. 
Note that $T^{-A} n\sum_{m>n} \frac{|C_\phi(n+m)|}{m^{3/2+\epsilon}}$ only appears when $\phi$ is non-holomorphic.

When $t=0$ the main term $M(1/2, \phi; n)$ (when $\xi=1_N$) is given by 
\begin{multline}
M(1/2, \phi; n) 
= \frac{C_\phi(n)}{\sqrt{n}} \frac{\varphi(M)}{M} 
\frac{1}{\pi^2} \int_{-\infty}^\infty h(r) r \tanh(\pi r) 
\bigg(\psi\left(\frac{k}{2}+ir\right) + \psi\left(\frac{k}{2}-ir\right)\bigg)
\; dr
\\ + \frac{C_\phi(n)}{\sqrt{n}} \frac{\varphi(M)}{M} 
\bigg\{\sum_{p\mid M} \frac{\log p}{1-p^{-1}} + \log \left(\frac{M}{(2\pi)^2 n}\right) + 2\gamma_0 \bigg\} 
H_0(h)
\end{multline}
when $\phi$ is a holomorphic cusp form of weight $k$ (even), 
and 
\begin{multline}
M(1/2, \phi; n) 
= \frac{C_\phi(n)}{\sqrt{n}} \frac{\varphi(M)}{M} 
\frac{1}{\pi^2} \int_{-\infty}^\infty h(r) r \tanh(\pi r) 
\\ \times 
\frac{1}{2} \bigg(\psi\left(\frac{1}{2}+\nu+ir\right) + \psi\left(\frac{1}{2}+\nu-ir\right)
+ \psi\left(\frac{1}{2}-\nu+ir\right) + \psi\left(\frac{1}{2}-\nu-ir\right)\bigg)
\; dr
\\ + \frac{C_\phi(n)}{\sqrt{n}} \frac{\varphi(M)}{M} 
\bigg\{\sum_{p\mid M} \frac{\log p}{1-p^{-1}} + \log \left(\frac{M}{(2\pi)^2 n}\right) + 2\gamma_0 \bigg\} 
H_0(h)
\end{multline}
when $\phi$ is a non-holomorphic automorphic cusp form of type $\nu\in i\R$. 
Here $\gamma_0$ is the Euler-Mascheroni constant and $\psi(x) = \frac{\Gamma'}{\Gamma}(x)$ is the digamma function.   

When $t \ne 0$, $M(1/2 +it, \phi; n)$ is given by \eqref{e:M_holo} when $\phi$ is holomorphic and  \eqref{e:Mdef_nonholo} when $\phi$ is non-holomorphic.

For  $T \gg n$, the main term $M(1/2, \phi; n)$ is asymptotic to  
\begin{equation}\label{e:main_asymp}
\frac{c_\alpha C_\phi (n)}{\sqrt{n}}\frac{\varphi(M)}{M} T^{1+\alpha}\bigg(2 \log T +
\sum_{p\mid M} \frac{\log p}{1-p^{-1}} +  \log \left(\frac{M}{(2\pi)^2n}\right)+2\gamma_0 \bigg)
\end{equation}
for an explicit constant $c_\alpha$ dependent only on $\alpha$, as $T \rightarrow \infty$.
When $t \ne 0$, $s = 1/2 +it$, and  $T \gg n$, the main term $M(1/2 +it, \phi; n)$, given by \eqref{e:M_holo} when $\phi$ is holomorphic, and   \eqref{e:Mdef_nonholo} when $\phi$ is non-holomorphic, is asymptotic to 
\begin{equation}\label{e:main_asymp2}
d_\alpha L^{(M)}(1+2it, \xi) \frac{C_\phi(n)}{n^{\frac{1}{2}+it} }T^{1+\alpha},
\end{equation}
for another explicit non-zero constant $d_\alpha$ dependent only on $\alpha$, as $T \rightarrow \infty$.
(Note that $\phi$, and hence $M, \kappa$, are fixed as $T \rightarrow \infty$.)

The main term disappears if $C_{\phi}(n)=0$.   If $C_{\phi}(n) \ne 0$ then the main term dominates the error term (i.e. the first moment expression given in \eqref{e:first_est} is non-trivial) for all $\beta <1$ and $0<\alpha \le 1$ when
\begin{equation}\label{bound}
C_{\phi}(n)T \gg \left( M^\epsilon 2^{\frac{k}{2}} n^{1+ \theta+\epsilon} \right)^{\frac{1}{\min (\alpha, 1-\beta)}+ \epsilon},
\end{equation}
with $0 < \epsilon < \min (\alpha, 1-\beta)$.
Recall $\theta =0$ when $\phi$ is a holomorphic cusp form, and $k = 0$, $\theta$ is the best progress toward the Ramanujan conjecture, when $\phi$ is a weight 0 Mass cusp form.
\end{theorem}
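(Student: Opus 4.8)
The plan is to obtain everything from the exact identities of Theorem~\ref{thm:first_holo} and Theorem~\ref{thm:first_nonholo}, specialized to $s=\tfrac12+it$ and to the test function $h=h_{T,\alpha}$ of \eqref{e:testh_f}. Since $\phi$ is assumed cuspidal, $M^\pm(\tfrac12+it,\phi;n)=0$, so the right-hand side of those identities is $M(\tfrac12+it,\phi;n)+L_1^+(\tfrac12+it,\phi;n)+L_1^-(\tfrac12+it,\phi;n)$ in the holomorphic case, with an additional $L_2(\tfrac12+it,\phi;n)$ in the non-holomorphic case. Thus the theorem splits into two essentially independent tasks: (a) extracting the claimed closed form and $T\to\infty$ asymptotics of the main term $M(\tfrac12+it,\phi;n)$, and (b) bounding the Mellin--Barnes integrals $L_1^\pm$ and $L_2$ by the quantities in \eqref{e:first_est}.

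For (a), I would first evaluate $H_0(h)$ of \eqref{e:H0} for $h=h_{T,\alpha}$: since $\tfrac{r^2+1/4}{r^2+C}\to1$ on the support of the Gaussian and $r\tanh(\pi r)\sim|r|$ there, a Laplace-method computation gives $H_0(h_{T,\alpha})=c_\alpha T^{1+\alpha}(1+o(1))$ with $c_\alpha$ explicit. Next, insert $s=\tfrac12+it$, $|t|=T^\beta<T$, into $H_{(k-1)/2}(s;h)$ and $H_\nu(s;h)$ of \eqref{e:Hk-1/2_h} and \eqref{e:Hnu_h}; Stirling applied to the gamma quotients shows these are $O(T^{1+\alpha})$ with the stated leading constants. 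When $\xi=1_N$ and $t=0$ the two summands of $M$ in \eqref{e:M_holo} and \eqref{e:Mdef_nonholo} each have a simple pole at $s=\tfrac12$, coming from $L^{(M)}(2s,\xi)$ and from $\zeta(2-2s)$ respectively; expanding both in Laurent series about $s=\tfrac12$ (using the values of $\tfrac{\pi}{\sin\pi s}$ and of $H_\nu(s;h)$ at $s=\tfrac12$, and the condition $h(\pm i/2)=0$) one checks that the residues cancel, and the finite part is exactly the digamma expression claimed, with the constant $\sum_{p\mid M}\tfrac{\log p}{1-p^{-1}}+\log\bigl(\tfrac{M}{(2\pi)^2 n}\bigr)+2\gamma_0$ arising from the constant terms of the relevant Laurent expansions. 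Multiplying by the $H_0$-asymptotic yields \eqref{e:main_asymp}; for $t\ne0$ the surviving first summand gives \eqref{e:main_asymp2}.

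For (b), the mechanism is the exponential localization of the weight: on a vertical line the function $h_{T,\alpha}(u/i)$ is negligible unless $|\Im u|$ lies within $O(T^{\alpha+\epsilon})$ of $T$, so each $u$-integral in \eqref{L1+}, \eqref{L1-}, \eqref{e:L2} contributes only a range of length $T^{\alpha+\epsilon}$. For $L_1^-$ one inserts this, applies Stirling to the gamma product and to the reciprocal gamma factor along appropriately chosen contours $C,(\sigma_v)$, uses $|\sigma_{1-2s}(m;\xi_*,M)|\ll m^\epsilon$ and $n^{\Re v}\ll n^\epsilon$, and is left with $M^\epsilon n^\epsilon\,T^{\alpha+\beta+\epsilon}\sum_{m=1}^{n-1}|C_\phi(n-m)|m^{-1/2-\epsilon}$, the factor $T^\beta$ being the Stirling contribution of the $t$-dependence of the quotient $\Gamma(s+\tfrac{k-1}2+v)/\Gamma(\ldots)$. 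For $L_1^+$ and $L_2$ one uses the meromorphic continuation and polynomial bounds of the shifted Dirichlet series $D(v+\tfrac12,s-\tfrac12;\phi;n)$ of \eqref{e:Dphi} (bounding $D$ on shifted $v$-lines by its defining series, with $|C_\phi(m+n)|\ll(m+n)^{\theta+\epsilon}$ and $|\sigma_{-2it}(m;\xi_*,M)|\ll m^\epsilon$), pushing $\Re v$ down to produce the range $m\le n$ with weight $2^{k/2}M^\epsilon n^\epsilon\,T^{1+\epsilon}\sum_{m\le n}|C_\phi(n+m)|m^{-1/2-\epsilon}$; in the non-holomorphic case the different shape of $L_1^\pm$ and the extra term $L_2$ leave a residual tail $m>n$, which by pushing $\Re v$ large past the $h$-decay contributes only the harmless $T^{-A}n\sum_{m>n}|C_\phi(n+m)|m^{-3/2-\epsilon}$. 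Summing the three pieces gives the precise error, and $\sum_{m=1}^{n}|C_\phi(n\pm m)|m^{-1/2-\epsilon}\ll n^{1/2+\theta+\epsilon}$ turns it into the clean bound $M^\epsilon 2^{k/2}n^{1/2+\theta+\epsilon}T^{\max\{1,\alpha+\beta\}+\epsilon}$ of \eqref{e:first_est}.

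Finally, the main term is $\asymp \tfrac{|C_\phi(n)|}{\sqrt n}\tfrac{\varphi(M)}{M}T^{1+\alpha}\log T$ and the error is $\ll M^\epsilon 2^{k/2}n^{1/2+\theta+\epsilon}T^{\max\{1,\alpha+\beta\}+\epsilon}$, so the main term dominates once $|C_\phi(n)|\,T^{1+\alpha-\max\{1,\alpha+\beta\}-\epsilon}\gg 2^{k/2}M^\epsilon n^{1+\theta+\epsilon}$; since $1+\alpha-\max\{1,\alpha+\beta\}=\min\{\alpha,1-\beta\}$, this is precisely \eqref{bound}. The principal obstacle is not any single estimate but the bookkeeping in step (b): choosing the contours $(\sigma_u),(\sigma_v),C$ so that all gamma quotients submit to Stirling without crossing poles, so that the shifted series and short sums converge and reproduce exactly the exponents $m^{-1/2-\epsilon}$ (resp.\ $m^{-3/2-\epsilon}$) and the arguments $C_\phi(n-m)$, $C_\phi(n+m)$, and so that the three sources of $T$-growth (the $T^\alpha$ from the localization of $h_{T,\alpha}$, the $T^\beta$ from the $t$-aspect of the gammas, and the bare $T$ from the $L_1^+$ contour) combine to exactly $T^{\max\{1,\alpha+\beta\}}$; securing uniformity in $M$ and $n$ with only an $M^\epsilon$ loss is the delicate point.
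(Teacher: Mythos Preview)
Your plan matches the paper's approach: specialize the exact identities of Theorems~\ref{thm:first_holo} and~\ref{thm:first_nonholo} to $s=\tfrac12+it$ with $h=h_{T,\alpha}$, extract $M(\tfrac12,\phi;n)$ via the Laurent expansion at $s=\tfrac12$ (this is Lemma~\ref{lem:M_nonholo_t=0}), and bound $L_1^\pm,L_2$ by Stirling on the gamma ratios after localizing $|\Im u|\sim T$. Your description of $L_1^-$ and of the final comparison giving \eqref{bound} is accurate.

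There is one genuine technical point you have not identified, and without it your $L_1^+$ bound would not close. You say the ``bare $T$'' in $L_1^+$ comes from ``the $L_1^+$ contour,'' but this is where the real work lies. In the non-exponentially-decaying region $|t|\le|r|\le|\gamma|$ with $r,\gamma$ of the same sign (writing $u=\sigma_u+i\gamma$, $v=\sigma_v+ir$), the Stirling analysis of the gamma ratio in \eqref{e:L+_1/2} leaves a factor
\[
\left(\frac{|\gamma-r|}{|\gamma+r|}\cdot\frac{|\gamma-t|}{|\gamma+t|}\right)^{\sigma_u}\ll e^{-2(|r|-|t|)\sigma_u/|\gamma|}.
\]
On a fixed line $\sigma_u=O(1)$ this gives no decay, and the $r$-integral has length $\asymp T$, producing $T^{1+\alpha+\epsilon}$ rather than $T^{1+\epsilon}$. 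The paper's device (Lemma~\ref{lemma:5.2}) is to push the $u$-contour to $\sigma_u=T^\alpha$ --- which is permissible because the Gaussian in $h_{T,\alpha}$ still dominates there --- so that the exponential above forces $||r|-|t||\ll T^{1-\alpha+\epsilon}$, shrinking the effective $r$-range to $T^{1-\alpha}$ and recovering $T^{\alpha}\cdot T^{1-\alpha}=T$. Your acknowledgement that ``choosing the contours'' is the obstacle is correct, but this specific move is the missing idea, not just bookkeeping; you should name it explicitly. The $L_2$ analysis in the non-holomorphic case is easier (the exponential factor there already gives decay in $T$ on fixed contours), and your treatment of the $m>n$ tail via large $\Re v$ is in line with the paper.
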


\begin{remark}
The test function $h_{T, \alpha}(r)$ isolates Maass cusp forms $u_j$ with the Laplace eigenvalue $\frac{1}{4}+r_j^2$ in the range $\left|T-|r_j|\right| \ll T^\alpha$, and by Weyl's law  the number of such Maass cusp forms  is a multiple of $T^{1+\alpha}$.  Thus the theorem implies the Lindel\"of Hypothesis is true on average for   $\scrL(1/2+it, \phi\times \overline{u_j}) $ in the spectral aspect, for $|T-|r_j|| \ll T^{\alpha + \epsilon}$ when $T$ satisfies the lower bound \eqref{bound}.
\end{remark}

A consequence of Theorem~\ref{thm:upperbound_first} is the following corollary. 
\begin{corollary}\label{cor:determine}
For $\ell\in \{1, 2\}$, let $\phi_\ell$ be a newform of even weight $k_\ell$ if holomorphic and type $\nu_\ell$ if non-holomorphic, and level $N_\ell$. 
We further assume that $\phi_\ell$ is normalized so the first Fourier coefficient is $1$. 
Let $k=\max\{k_1, k_2\}$, $\kappa = 1$ if  $\phi_\ell$ are holomorphic, and $k = 0$,  $\kappa=\max\{|\nu_1|, |\nu_2|\}$ if  the $\phi_\ell$ are non-holomorphic.
Finally, let $M=\lcm(N_1, N_2)$ and let $T\gg_\epsilon (2^{\frac{k}{2}}M \kappa^2)^{1+\epsilon}$. 
Let $\{u_j\}_{j\geq 1}$ be an orthonormal basis of Hecke-Maass cusp forms for $\Gamma_0(M)$. 
If 
\begin{equation}\label{above}
\scrL(1/2, \phi_1 \times u_j) = \scrL(1/2, \phi_2 \times u_j), 
\end{equation}
for $|r_j| \ll T^{1+\epsilon}$, 
then $\phi_1=\phi_2$.
\end{corollary}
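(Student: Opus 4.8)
The plan is to apply Theorem~\ref{thm:upperbound_first} to the difference $\phi = \phi_1 - \phi_2$ and exploit the fact that the hypothesis \eqref{above} forces the first moment of $\phi$ against the $u_j$ to vanish for all $|r_j| \ll T^{1+\epsilon}$. Since $\phi_1, \phi_2$ are newforms of levels $N_1, N_2$ dividing $M = \operatorname{lcm}(N_1,N_2)$, and both have trivial nebentypus in the relevant normalization, the difference $\phi$ is an automorphic form of level $M$ (no longer a newform, but that is irrelevant: the setup of \S\ref{ss:first} only requires $\phi$ to be of level $N$ with $N \mid M$, and one simply takes $N = M$ here). Write $\phi = \sum_m C_\phi(m) (\cdots)$ with $C_\phi(m) = a_{\phi_1}(m) - a_{\phi_2}(m)$ in the normalization where the first Fourier coefficient equals $1$; thus $C_\phi(1) = 0$. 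If $\phi_1 \ne \phi_2$ then by multiplicity one $C_\phi(m) \ne 0$ for some $m$; let $n$ be the smallest such index.

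The key step is to feed this $n$ into Theorem~\ref{thm:upperbound_first}. By linearity, $\scrL(1/2, \phi \times u_j) = \scrL(1/2, \phi_1 \times u_j) - \scrL(1/2, \phi_2 \times u_j) = 0$ for all $j$ with $|r_j| \ll T^{1+\epsilon}$ by \eqref{above}, and similarly the continuous-spectrum contribution $\scrL_\cuspa(1/2, ir; \phi)$ vanishes identically (it is linear in $\phi$ and $\phi_1,\phi_2$ have the same Rankin-Selberg convolution against every Eisenstein series — this follows because the Eisenstein series is itself a limit/integral of the same spectral objects, or one argues directly from \eqref{e:RS_phiE} that both sides agree). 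Hence the entire left-hand side of \eqref{e:first_est} is $0$ when the test function $h_{T,\alpha}$ with, say, $\alpha = 1$ is used, provided $T$ is large enough that the Gaussian cutoff in \eqref{e:testh_f} only sees eigenvalues with $|r_j| \ll T^{1+\epsilon}$. Therefore Theorem~\ref{thm:upperbound_first} yields
\begin{equation*}
0 = M(1/2, \phi; n) + O_\epsilon\!\left( M^\epsilon 2^{k/2} n^{1/2+\theta+\epsilon} T^{1+\epsilon} \right),
\end{equation*}
with $k = \max\{k_1,k_2\}$ (or $0$ in the non-holomorphic case) and $\theta$ the Ramanujan exponent (or $0$ in the holomorphic case).

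Now invoke the asymptotic \eqref{e:main_asymp}: since $C_\phi(n) \ne 0$ and (by the minimality of $n$ and $C_\phi(1) = 0$, so $n \geq 2$) we have $T \gg n$ once $T$ is beyond the stated threshold, the main term is asymptotic to $c_\alpha \frac{C_\phi(n)}{\sqrt n}\frac{\varphi(M)}{M} T^{1+\alpha}(2\log T + \cdots)$ with $\alpha = 1$, hence has size $\gg_\phi T^2 \log T$. Comparing with the error term $O(M^\epsilon 2^{k/2} n^{1/2+\theta+\epsilon}T^{1+\epsilon})$, we get a contradiction as soon as $T$ exceeds a bound of the shape $(2^{k/2} M \kappa^2)^{1+\epsilon}$ — this is exactly where the stated hypothesis $T \gg_\epsilon (2^{k/2} M \kappa^2)^{1+\epsilon}$ enters; one must check that the implied dependence on $\phi$ (through $n$, the size of $C_\phi(n)$, and $\kappa = \max|\nu_\ell|$ governing the gamma factors in the main term) is absorbed by these explicit quantities, using that $n$ can be taken to be a bounded power of the analytic conductor and $|C_\phi(n)| \gg n^{-\epsilon}$ is not needed — rather one uses $|C_\phi(n)| \geq 1$ is false in general, so instead note $C_\phi(n) \ne 0$ is an algebraic integer in a fixed number field, bounding it below is the subtle point. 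The contradiction forces $\phi_1 = \phi_2$.

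The main obstacle is making the quantitative bookkeeping in the last paragraph precise and uniform: one must pin down how large the smallest $n$ with $C_\phi(n) \ne 0$ can be relative to the conductors (a strong multiplicity-one / Sturm-bound type input), and one must lower-bound $|C_\phi(n)|/\sqrt n$ in terms of $M$ and $\kappa$ so that the main term genuinely dominates the error for $T$ just above the threshold $(2^{k/2}M\kappa^2)^{1+\epsilon}$. The dependence of the main-term constant on $\kappa$ through the digamma/gamma factors in \eqref{e:main_asymp} (in the non-holomorphic case the $\psi(\tfrac12 \pm \nu \pm ir)$ terms) must also be controlled to ensure it does not degrade the exponent. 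Everything else — the vanishing of the spectral side, the application of the theorem, the asymptotic of the main term — is immediate from the results already established in the excerpt.
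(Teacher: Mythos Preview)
Your approach has the right shape but contains several genuine gaps that the paper's proof resolves differently.

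First, you cannot apply Theorem~\ref{thm:upperbound_first} to $\phi=\phi_1-\phi_2$. The hypotheses of the paper require $\phi$ to be a \emph{newform} (see \S\ref{ss:notation}: ``We further assume that $\phi$ is a newform for level $N$, and is a Hecke eigenform''), which the functional equation of additive twists in Proposition~\ref{prop:L_fe} relies on. Worse, if $k_1\neq k_2$ (or $\nu_1\neq\nu_2$), the difference is not even an automorphic form of a single weight or type. The paper instead applies Theorem~\ref{thm:upperbound_first} to $\phi_1$ and $\phi_2$ separately and subtracts the two identities \eqref{e:first_est}.

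Second, your claim that the continuous-spectrum contribution vanishes is false: the hypothesis \eqref{above} only concerns the cuspidal $u_j$. The paper handles this by bounding $K_{\cont}(n,\phi_\ell)$ directly (Lemma~\ref{lem:cont}), using Cauchy--Schwarz on Fourier coefficients of Eisenstein series and a second-moment bound for $L(1/2+ir,\phi_\ell)$.

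Third, and most importantly, your single-$n$ strategy cannot be made uniform. You correctly identify the obstruction yourself: one would need a lower bound on $|C_{\phi_1}(n)-C_{\phi_2}(n)|$ for the first $n$ where they differ, together with control of that $n$ in terms of the conductors. For Maass forms the Hecke eigenvalues are not known to be algebraic, so no algebraic-integer argument is available. The paper sidesteps this entirely: from \eqref{e:diff_Cphi_upper} it derives an upper bound for $|C_{\phi_1}(n)-C_{\phi_2}(n)|^2$ valid for \emph{all} $n$, then \emph{sums over $n\asymp X$} with a smooth weight $G(n/X)$. Rankin--Selberg theory (using that $L(s,\phi_1\times\overline{\phi_2})$ has no pole at $s=1$ when $\phi_1\neq\phi_2$) gives the lower bound
\[
\sum_n |C_{\phi_1}(n)-C_{\phi_2}(n)|^2\,G(n/X)\;\gg\; X^{1-\epsilon}
\]
for $X\gg (N_1N_2\kappa^2)^{1+\epsilon}$, while the accumulated upper bound is $\ll T^{-2+\epsilon}2^k X^{3+\epsilon}$. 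Choosing $X$ at the threshold and $T\gg (2^{k/2}M\kappa^2)^{1+\epsilon}$ produces the contradiction. This averaging is the missing idea: it replaces the unavailable pointwise lower bound on a single Fourier-coefficient difference by a robust $L^2$ lower bound.
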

The proof is given in \S\ref{ss:proof_cor_detrmine}.
This is the latest incarnation of a theorem that was proved first by Luo and Ramakrishnan \cite{LR97}. 
They showed that if
\begin{equation}
L(1/2, \phi_1 \times \chi_d) = L(1/2, \phi_2 \times \chi_d) 
\end{equation}
for all quadratic characters $\chi_d$, then $\phi_1=\phi_2$.  
Generalizations of this appeared in 
\cite{Luo99},  \cite{CD05},  \cite{GHS09}  and \cite{Zha11}.
The most recent results that we are aware of are \cite{ms15} and \cite{ss19}. 
In \cite{ms15}, they prove a result similar to Corollary~\ref{cor:determine}. 
For Maass newforms of full level $\phi_1$ and $\phi_2$ 
they show that if \eqref{above} holds for 
$|r_j| \ll_\epsilon \kappa^{4\theta +4 + \epsilon}$ 
then $\phi_1=\phi_2$, where $\theta$ refers to the best progress toward the Ramanujan Conjecture for Mass forms of full level.  Here we work with arbitrary level, and both holomorphic and Maass forms, have improved the exponent of $\kappa$ from $4$ to $2$ 
and eliminated the dependence on the Ramanujan Conjecture.
Note that in the Maass form case the term $2^{k/2}$ is replaced by 1 in the lower bound for $T$.
\section{A first moment formula}
We use the same notation as in \S\ref{ss:notation} and \S\ref{ss:spectral_decomp}. 
Recalling \eqref{e:K_firstmoment_def}, we consider 
\begin{multline}
K(s, \phi; n, h)
= \sum_j \frac{h(r_j)}{\cosh(\pi r_j)} \rho_j(n) \scrL(s, \phi\times \overline{u_j}) 
\\ + \sum_{\cuspa} \frac{1}{4\pi} \int_{-\infty}^\infty \frac{h(r)}{\cosh(\pi r)} 
\tau_\cuspa(1/2+ir, n) \scrL_\cuspa(s, ir; \phi) \; dr.
\end{multline}
For $\Re(s)>1$, by opening up the Rankin-Selberg $L$-functions we get
\begin{multline}\label{e:K_firstmoment_open}
K(s, \phi; n, h)
=
L^{(M)}(2s, \xi) 
\sum_{m=1}^\infty \frac{C_\phi(m)}{m^s} 
\bigg\{\sum_j \frac{h(r_j)}{\cosh(\pi r_j)} \overline{\rho_j(m)} \rho_j(n) 
\\ + \sum_{\cuspa} \frac{1}{4\pi} \int_{-\infty}^\infty \frac{h(r)}{\cosh(\pi r)} 
\overline{\tau_{\cuspa}(1/2+ir, m)} \tau_\cuspa(1/2+ir, n) \;dr\bigg\}. 
\end{multline}
For each $m\geq 1$, we now apply the Bruggeman-Kuznetsov trace formula.

\subsection{The Bruggeman-Kuznetsov trace formula}\label{ss:BK_trace}
For a non-zero integer $q$ and $m, n\in \Z$, let 
\begin{equation}\label{e:Kloosterman}
S(m, n; q) = \sum_{\substack{a\bmod{q}, \\ a\bar{a}\equiv 1\bmod{q}}} e^{2\pi i \frac{ma+n\bar{a}}{q}}
\end{equation}
denote the Kloosterman sum. 

From \cite[Theorem~9.2]{Iwa02}, we have the following. 
For non-zero integers $m$ and $n$, assume $mn>0$.  
For any $h(t)$ satisfying the conditions in \S\ref{ss:first}, we have
\begin{multline}\label{e:Kuznetsov}
\sum_j \frac{h(r_j)}{\cosh(\pi r_j)} \overline{\rho_j(m)}\rho_j(n)
+ \sum_\cuspa \frac{1}{4\pi} \int_{-\infty}^\infty 
\frac{h(r)}{\cosh(\pi r)} \overline{\tau_\cuspa\left(1/2+ir, m\right)}\tau_\cuspa\left(1/2+ir, n \right) \; dr
\\ = \delta_{m=n} \cdot H_0(h)
+ \sum_{q\equiv 0 \bmod M}
\frac{S(m, n; q)}{q} H^{+} \left(\frac{4\pi\sqrt{mn}}{q}; h\right),
\end{multline}
where $H_0$ is given in \eqref{e:H0} 
and 
\begin{equation}
H^+(x; h) = \frac{2i}{\pi}\int_{-\infty}^\infty J_{2it}(x)\frac{h(t)t}{\cosh(\pi t)}\; dt.
\end{equation}
Here $J_{2it}(x)$ is the classical $J$-Bessel function. 

By applying the Bruggeman-Kuznetsov trace formula \eqref{e:Kuznetsov} to \eqref{e:K_firstmoment_open}, 
for $\Re(s)>1$, we get
\begin{equation}\label{e:K_open}
K(s, \phi; n, h)
= L^{(M)}(2s, \xi) \frac{C_\phi(n)}{n^s} H_0(h)
+ K_0(s, \phi; n, h)
\end{equation}
where 
\begin{equation}\label{e:K0_def}
K_0(s, \phi; n, h)
= L^{(M)}(2s, \xi) \sum_{q\equiv 0\bmod{M}} 
\sum_{m=1}^\infty \frac{C_\phi(m)}{m^s} \frac{S(m, n; q)}{q} H^+\left(\frac{4\pi \sqrt{mn}}{q}; h\right).
\end{equation}
The next step is to write $K_0(s, \phi; n, h)$ as an inverse Mellin transform of the series associated to $\phi$ twisted by Kloostermans $S(m, n; q)$ and $J$-Bessel functions. 

For $n\geq 1$ and $q\geq 1$, for $u, s\in \C$, 
we define 
\begin{equation}\label{e:Lq_JBessel}
L_q(s, u; \phi; n) = \sum_{m=1}^\infty \frac{C_\phi(m)}{m^s} \frac{S(m, n; q)}{q} J_{2u}\left(\frac{4\pi \sqrt{mn}}{q}\right), 
\end{equation}
where the $C_\phi(m)$'s are the Fourier coefficients of $\phi$ normalized as in \eqref{e:Fourierexp_holo} and  \eqref{e:Fourierexp_nonholo} (depending on whether $\phi$ is holomorphic or not). 

\begin{lemma}\label{lem:Lq_conv}
The series $L_q(s, u; \phi;n)$ converges absolutely for $\Re(u)\geq 0$ and $\Re(s)>3/4$. 
\end{lemma}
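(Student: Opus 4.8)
\textbf{Proof proposal for Lemma~\ref{lem:Lq_conv}.}

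The plan is to estimate the general term of $L_q(s,u;\phi;n)$ crudely and sum, using known bounds for the three ingredients: the normalized Fourier coefficients $C_\phi(m)$, the Kloosterman sum $S(m,n;q)$, and the $J$-Bessel function $J_{2u}$. For the coefficients, since $\phi$ is a fixed Hecke newform (holomorphic of weight $k$ or Maass of type $\nu$), we have $C_\phi(m)\ll_{\phi,\epsilon} m^{\theta+\epsilon}$ for the exponent $\theta$ toward Ramanujan (with $\theta=0$ in the holomorphic case); in particular $C_\phi(m)\ll_\epsilon m^{1/2}$ suffices crudely. For the Kloosterman sum the trivial bound $|S(m,n;q)|\le q$ is enough, so $\frac{|S(m,n;q)|}{q}\le 1$. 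The substantive input is the behavior of $J_{2u}(x)$ near $x=0$: for $\Re(u)\ge 0$ one has the uniform bound $J_{2u}(x)\ll_u \min\{x^{2\Re(u)}, x^{-1/2}\}$, and in particular $J_{2u}(x)\ll_u x^{2\Re(u)}\ll_u 1$ on any bounded range and, more usefully, $J_{2u}(x)\ll_u x^{\epsilon'}$ near $0$ when $\Re(u)=0$ — the key point being that $J_{2u}(x)$ stays bounded as $x\to 0$ whenever $\Re(u)\ge 0$, since the series expansion $J_{2u}(x)=\sum_{\ell\ge 0}\frac{(-1)^\ell}{\ell!\,\Gamma(\ell+2u+1)}(x/2)^{2\ell+2u}$ has no negative powers of $x$.

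First I would record these three estimates precisely, being careful that the implied constants in the Bessel bound are uniform for $\Re(u)$ in compact subsets of $[0,\infty)$ (and locally uniform in $\Im(u)$), which is what ``converges absolutely for $\Re(u)\ge 0$'' requires. Then I would combine them: the $m$th term of $L_q(s,u;\phi;n)$ is bounded by
\[
\frac{|C_\phi(m)|}{m^{\Re(s)}}\cdot 1\cdot \Big|J_{2u}\Big(\tfrac{4\pi\sqrt{mn}}{q}\Big)\Big|
\ll_{\phi,u,n,q,\epsilon} \frac{m^{1/2+\epsilon}}{m^{\Re(s)}}\cdot \Big(\tfrac{4\pi\sqrt{mn}}{q}\Big)^{\delta},
\]
where $\delta\ge 0$ can be taken to be $\min\{2\Re(u),\,-1/2\}$ on the tail $m\gg (q/\sqrt n)^2$ — that is, using the decay $J_{2u}(x)\ll x^{-1/2}$ for large argument — so that the exponent of $m$ in the tail is $1/2+\epsilon-\Re(s)-1/4 = 1/4+\epsilon-\Re(s)$. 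Hence the tail of the series converges provided $\Re(s) > 5/4$ from this crude bound — which is not yet good enough, so the coefficient bound must be sharpened.

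The main obstacle, then, is extracting enough from the large-argument asymptotics of $J$-Bessel together with a nontrivial average of $C_\phi(m)$. The honest route is: on the range $m\le (q/\sqrt n)^2$ use $J_{2u}(x)\ll_u x^{2\Re(u)}\le 1$ and $C_\phi(m)\ll_\epsilon m^{\theta+\epsilon}$, giving a finite sum; on the range $m > (q/\sqrt n)^2$ use $J_{2u}\big(\tfrac{4\pi\sqrt{mn}}{q}\big)\ll_u \big(\tfrac{q}{\sqrt{mn}}\big)^{1/2}$, so the $m$th term is $\ll_{u,n,q,\epsilon} m^{\theta+\epsilon}\cdot m^{-\Re(s)}\cdot m^{-1/4}$, and the tail converges as soon as $\Re(s) > 3/4 + \theta$. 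Since $\theta \le 7/64 < 1/2$ unconditionally (and $\theta = 0$ when $\phi$ is holomorphic), and more to the point since one can do slightly better by appealing to the Rankin–Selberg bound $\sum_{m\le X}|C_\phi(m)|^2 \ll_\phi X$ and Cauchy–Schwarz in place of the pointwise coefficient bound — which replaces $m^{\theta+\epsilon}$ by an effective $m^{\epsilon}$ on average — the tail in fact converges for $\Re(s) > 3/4$, as claimed. I would therefore carry out the estimate with Cauchy–Schwarz: split the tail dyadically, $X < m \le 2X$, bound $\sum_{X<m\le 2X}|C_\phi(m)| \ll_\epsilon X^{1/2+\epsilon}\big(\sum|C_\phi(m)|^2\big)^{1/2}\ll_\phi X^{1+\epsilon}$ by Cauchy–Schwarz and Rankin–Selberg, pull out the factor $m^{-\Re(s)-1/4}\asymp X^{-\Re(s)-1/4}$ from the decay of $J_{2u}$, and sum the resulting geometric-type series $\sum_X X^{1+\epsilon}X^{-\Re(s)-1/4} = \sum_X X^{3/4+\epsilon-\Re(s)}$, which converges precisely when $\Re(s) > 3/4$. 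Absolute convergence in $u$ for $\Re(u)\ge 0$ then follows because every bound used was uniform for $u$ in such a region, completing the proof.
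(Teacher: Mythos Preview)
Your proposal is correct and follows essentially the same approach as the paper: both use the Bessel bound $J_{2u}(x)\ll \min\{x^{2\Re(u)},x^{-1/2}\}$ to gain a factor $m^{-1/4}$, together with the average bound $\sum_{m\le X}|C_\phi(m)|\ll_\phi X$ (which you recover via Cauchy--Schwarz and Rankin--Selberg, while the paper simply cites it), to conclude convergence for $\Re(s)>3/4$. The only cosmetic differences are that the paper invokes Weil's bound for the Kloosterman sums where you use the trivial bound (irrelevant since the implied constant may depend on $q$), and the paper states the argument in two lines rather than working through the preliminary attempts with pointwise coefficient bounds.
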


\begin{proof}
By the following crude estimate 
\begin{equation}
J_{2u}(x) \ll \min\{x^{2\Re(u)}, x^{-\frac{1}{2}}\}
\end{equation} 
and Weil's bound for Kloosterman sums 
\begin{equation}\label{e:Weil_Kloosterman}
|S(m, n; q)| \leq \sqrt{q} \sqrt{\gcd(m, n, q)} \sigma_0(q), 
\end{equation}
and as it is well known that 
\begin{equation}
\sum_{1\leq m \leq X} \left|C_\phi(m)\right| \ll_\phi X, 
\end{equation}
for any sufficiently large $X$, we get
\begin{equation}
L_q(s, u; \phi; n) \ll_{q, n,\phi} 
\sum_{m=1}^\infty \frac{1}{m^{\Re(s)+\frac{1}{4}}} < \infty 
\end{equation}
for $\Re(s) >3/4$. 
\end{proof}

We will now write $K_0(s, \phi; n, h)$ as an inverse Mellin transform of $L_q(s, u; \phi, n)$. 
First for sufficiently small  $\sigma_u>0$, we get
\begin{equation}\label{e:H+_mellin}
H^+(x; h) = \frac{2i}{\pi} \int_{-\infty}^\infty J_{2it}(x) \frac{h(t) t}{\cosh(\pi t)} \; dt
= \frac{4}{2\pi i} \int_{(\sigma_u)} J_{2u}(x) \frac{h(u/i) u }{\cos(\pi u)} \; du. 
\end{equation}
Since the integral and the series $L_q(s, u; \phi; n)$ converge absolutely for $\Re(s)> 3/4$, 
we change the order and get
\begin{equation}\label{e:K0_open} 
K_0(s, \phi; n, h)
= L^{(M)}(2s, \xi) 
\sum_{q\equiv 0\bmod{M}} 
\frac{4}{2\pi i} \int_{(\sigma_u)}\frac{h(u/i) u}{\cos(\pi u)} L_q(s, u; \phi; n) \; du. 
\end{equation}

The next section will be devoted  to deriving the analytic properties of $L_q(s, u; \phi; n)$.  
These will be summarized in Proposition~\ref{prop:Lq_nonholo} and then applied to \eqref{e:K0_open}.  
Note also that, as we will show in \eqref{e:Lq_open1_qbound}, 
\begin{equation}
L_q(s, u; \phi, n) \ll q^{-\frac{1}{2}+2\sigma_v}
\end{equation}
for $\Re(u) >0$, $-\Re(u) < \sigma_v < 0$ and $\Re(s)+\sigma_v>1$. 
For this reason, we choose $\Re(u)=\sigma_u >1/4$ and $-\sigma_u< \sigma_v <-1/4$ so the series over $q$ converges absolutely 
and we may interchange the summation and the integral, obtaining 
\begin{equation}\label{e:K0_after_interchange}
K_0(s, \phi; n, h)
= L^{(M)}(2s, \xi) 
\frac{4}{2\pi i} \int_{(\sigma_u)}\frac{h(u/i) u}{\cos(\pi u)} \sum_{q\equiv 0\bmod{M}} L_q(s, u; \phi; n) \; du. 
\end{equation}
Here we further assume that $\frac{1}{4} < \sigma_u < \frac{3}{2}$; otherwise we pass over the pole of $\frac{1}{\cos(\pi u)}$ at $u=\frac{3}{2}$. 

\section{Additive twists and Kloosterman sums twists of $L$-series}\label{s:additive}

Let $\phi$ be an automorphic form of weight $k$, type $\nu$ and level $N$, with central character $\xi$, as given in \S\ref{s:intro}. 
For $n, q\geq 1$ with $N\mid q$, for $u, s\in \C$ with $\Re(u)\geq 0$ and $\Re(s)> 3/4$, 
recalling \eqref{e:Lq_JBessel}, the $L$-series twisted by Kloosterman sums and $J$-Bessel functions is given by 
\begin{equation}
L_q(s, u; \phi; n) = \sum_{m=1}^\infty \frac{C_\phi(m)}{m^s} \frac{S(m, n; q)}{q} J_{2u}\left(\frac{4\pi \sqrt{mn}}{q}\right)
\end{equation}
and this series converges absolutely in this region. 

 For $\alpha\in \Q$, we define the additive twist of the $L$-function as follows: 
\begin{equation}\label{e:additive_L}
L(s, \phi; \alpha) = \sum_{n=1}^\infty \frac{C_\phi(n) e(n\alpha)}{n^s}.
\end{equation}
Our aim in this section 
is to obtain the analytic properties of $L_q(s, u; \phi; n)$ 
(see Proposition~\ref{prop:Lq_nonholo}). 
By using the Mellin inversion formula for the $J$-Bessel function \eqref{e:JBessel_MI} 
and also the definition of Kloosterman sums, 
we write the $L$-series $L_q(s, u; \phi; n)$ as a contour integral of a linear combination 
of additive twists of $L$-functions $L(s, \phi; a/q)$ for $a\bmod{q}$, $\gcd(a, q)=1$ (see \eqref{e:Lq_open2}). 
We then apply the functional equation and meromorphic continuation of the additive twists of $L$-functions.

\subsection{$L$-series twisted by Kloosterman sums and $J$-Bessel functions}\label{ss:L-K-B}

By the inverse Mellin transform \cite[6.422(6)]{GR}, 
for a fixed $u\in \C$ with $\Re(u)>0$, for $x>0$, we have
\begin{equation}\label{e:JBessel_MI}
J_{2u}(x) = \frac{1}{2\pi i}\int_{(\sigma_v)} \frac{\Gamma\left(u+v\right)}{\Gamma\left(1+u-v\right)} \left(\frac{x}{2}\right)^{-2v} \; dv
\end{equation}
for $-\Re(u) < \sigma_v < \frac{1}{2}$. 
Note that when $-\Re(u)< \sigma_v< 0$, by Stirling's formula, the integral converges absolutely. 

For $\Re(u)>0$, $-\Re(u) < \sigma_v< 0$ and $\Re(s)+\sigma_v>1$, we apply \eqref{e:JBessel_MI} to \eqref{e:Lq_JBessel} and get
\begin{equation}\label{e:Lq_open1}
L_q(s, u; \phi; n)
= \frac{1}{2\pi i} \int_{(\sigma_v)}  \frac{\Gamma\left(u+v\right)}{\Gamma\left(1+u-v\right)} 
(2\pi)^{-2v} n^{-v} \sum_{m=1}^\infty \frac{C_\phi(m)}{m^{s+v}} \frac{S(m, n; q)}{q^{1-2v}} 
\; dv
\end{equation}
By the Weil bound for Kloosterman sums \eqref{e:Weil_Kloosterman}, we see that 
\begin{equation}\label{e:Lq_open1_qbound}
L_q(s, u; \phi; n) \ll_n q^{-\frac{1}{2}+2\sigma_v}.
\end{equation}
After opening up the Kloosterman sums \eqref{e:Kloosterman}, we get
\begin{equation}\label{e:Lq_open2}
L_q(s, u; \phi; n)
= \frac{1}{2\pi i} \int_{(\sigma_v)}  \frac{\Gamma\left(u+v\right)}{\Gamma\left(1+u-v\right)} 
(2\pi)^{-2v} n^{-v} q^{-1+2v} 
\sum_{\substack{a\bmod{q}, \\ a\bar{a}\equiv 1\bmod{q}}} e^{2\pi in \frac{a}{q}} L(s+v, \phi; \bar{a}/q)\; dv.
\end{equation}

We will now obtain the following analytic properties of $L(s, \phi; a/q)$. 
When the level of $\phi$ (in this case $N$) divides $q$ the proof is quite well-known (e.g. see \cite[Theorem~3.1]{DHKL}) 
and we skip the proof. 

\begin{proposition}\label{prop:L_fe}
Let $q$ and $N$ be positive integers and assume that $q$ is divisible by $N$. 
Using the notation in \S\ref{ss:notation}, let 
$\phi$ be an automorphic form of level $N$, which is either a holomorphic cusp form of weight $k$, $\nu=\pm \frac{k-1}{2}$, 
or a non-holomorphic Maass form of (weight $0$) type $\nu$, 
with Fourier expansions given in \eqref{e:Fourierexp_holo} and \eqref{e:Fourierexp_nonholo} respectively.
For $a\in \Z$ with $\gcd(a, q)=1$, the twisted $L$-series 
\begin{equation}
L(s, \phi; a/q) = \sum_{n=1}^\infty \frac{C_\phi(n)e\left(n\frac{a}{q}\right)}{n^s}
\end{equation}
has a meromorphic continuation to $s\in \C$ 
with possible simple poles only at $s=1\pm \nu$ 
with residues 
\begin{equation}\label{e:L_res}
\Res_{s=1\pm \nu} L(s, \phi; a/q)
= 2\xi(d) c_\phi^\pm\frac{q^{-1\mp 2\nu} \pi^{\frac{1}{2}\pm \nu}}
{\Gamma\left(\frac{1}{2}\pm \nu\right)}.
\end{equation}
Moreover it satisfies the following functional equation:
when $\phi$ is a holomorphic cusp form of weight $k$: 

\begin{equation}\label{e:L_fe_holo}
L(s, \phi;a/q) 
=  q^{-2s+1} (2\pi)^{2s-1} i^k \xi(d)
\frac{\Gamma\left(1-s+\frac{k-1}{2}\right)}{\Gamma\left(s+\frac{k-1}{2}\right)} 
L(1-s, \phi; -d/q).
\end{equation}
When $\phi$ is a non-holomorphic automorphic form, 
\begin{multline}\label{e:L_fe_nonholo}
L(s, \phi; a/q)
= \xi(d) q^{-2s+1}(2\pi)^{2s-1} 
\Gamma\left(1-s-\nu\right)\Gamma\left(1-s+\nu\right)
\\ \times \bigg\{ -\frac{1}{\Gamma\left(\frac{1}{2}+s\right)\Gamma\left(\frac{1}{2}-s\right)} L(1-s, \phi; -d/q) 
+ c_\phi(-1)\frac{\cos(\pi \nu)}{\pi} L(1-s, \phi; d/q) \bigg\}.
\end{multline}
Here $ad\equiv 1\bmod{q}$. 
\end{proposition}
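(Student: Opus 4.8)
The plan is to establish Proposition~\ref{prop:L_fe} by realizing the additive twist $L(s,\phi;a/q)$ as (essentially) a Mellin transform of $\phi$ evaluated along a vertical line through the cusp $a/q$, and then exploiting the transformation behavior of $\phi$ under the Atkin--Lehner-type involution $z\mapsto -1/(q^2 z)$ together with the identification of $-1/(q^2(z+a/q))$ with a translate by $-d/q$ of $-1/(q^2 z')$ (this is where $ad\equiv 1\bmod q$ enters). More precisely, I would start from the integral representation obtained by integrating $\phi$ against $y^{s}$ along the vertical line $\mathrm{Re}(z)=a/q$: write $\Lambda(s,\phi;a/q)=\int_0^\infty \big(\phi(a/q+iy)-(\text{constant term contributions})\big)y^{s}\,\frac{dy}{y}$, which by \eqref{e:Fourierexp_holo} or \eqref{e:Fourierexp_nonholo} and term-by-term integration of the exponential (resp. $K$-Bessel) factors reproduces $L(s,\phi;a/q)$ up to an explicit product of $\Gamma$-factors and powers of $2\pi$. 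The subtraction of constant terms $c_\phi^{\pm}y^{1/2\pm\nu}$ (present only in the non-holomorphic case) is exactly what produces the poles at $s=1\pm\nu$: splitting the $y$-integral at $y=1/q$ and using the transformation law on the piece near $0$ turns the divergent tails into simple poles whose residues are governed by those constant terms, giving \eqref{e:L_res} after tracking $\xi(d)$ and the $q^{-1\mp 2\nu}$ scaling.

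Next I would carry out the functional equation. Break $\int_0^\infty$ at $y=1/q$. On the range $y\le 1/q$, substitute $y\mapsto 1/(q^2 y)$ and invoke the weight-$k$ slash relation: since $\phi$ is a newform of level $N\mid q$, the matrix $\sm 0&-1\\ q&0\esm$ (or rather $\sm a&*\\ q&d\esm\sm 0&-1\\ 1&0\esm$) moves the cusp $a/q$ to $-d/q$, and $(\phi|_k\gamma)=\xi(d)\phi$ for the $\Gamma_0(q)$-part of this factorization supplies the character value $\xi(d)$ and a factor $i^k$ (from $(cz+d)^{-k}$ with the rotation) in the holomorphic case, or the pair of reflection formulas for $\Gamma$ in the non-holomorphic case. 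Collecting terms, the $y\le 1/q$ piece becomes $\Lambda(1-s,\phi;-d/q)$ times the asserted gamma-quotient; in the non-holomorphic case the second Fourier expansion term $c_\phi(-1)\sum C_\phi(m)\sqrt{y}K_\nu(2\pi my)e^{-2\pi imx}$ contributes the extra $L(1-s,\phi;d/q)$ term with the $\cos(\pi\nu)/\pi$ coefficient (this is the standard appearance of $K_\nu$'s Mellin transform giving $\Gamma(\tfrac{s+\nu}{2})\Gamma(\tfrac{s-\nu}{2})$, whose reflection produces $\cos(\pi\nu)$). Dividing through by the completing $\Gamma$-factors and powers of $q,2\pi$ yields \eqref{e:L_fe_holo} and \eqref{e:L_fe_nonholo}.

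The main obstacle I anticipate is purely bookkeeping rather than conceptual: correctly normalizing all the archimedean factors so that the $\Gamma$-quotients come out exactly as in \eqref{e:L_fe_holo}--\eqref{e:L_fe_nonholo}, and in particular getting the constant-term subtraction and the split at $y=1/q$ to produce precisely the residues \eqref{e:L_res} with the right dependence on $c_\phi^{\pm}$, $q$, $\nu$ and $\xi(d)$. One must also be slightly careful that the cusp $a/q$ need not be $\Gamma_0(N)$-equivalent to $\infty$, so the "constant term of $\phi$ at $a/q$" is what governs the residues; since $\phi$ is cuspidal in the holomorphic case there is no such term and hence no pole, consistent with the proposition. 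Because, as the authors note, this is standard when $N\mid q$ (e.g.\ \cite[Theorem~3.1]{DHKL}), I would present only this outline and cite that reference for the detailed verification, which is exactly the route the excerpt indicates.
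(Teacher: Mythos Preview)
Your outline matches the paper's approach: the proof (given in the appendix) sets up exactly the Mellin integral
\[
I_\phi(s,a/q)=\int_0^\infty\Big(\phi\big(\tfrac{a}{q}+i\tfrac{t}{q}\big)-c_\phi^+\big(\tfrac{t}{q}\big)^{\frac12+\nu}-c_\phi^-\big(\tfrac{t}{q}\big)^{\frac12-\nu}\Big)\,t^{s+\frac{k-1}{2}}\,\frac{dt}{t},
\]
splits at $t=1$, and uses that $\gamma=\sm a&b\\q&d\esm\in\Gamma_0(N)$ (since $N\mid q$) sends $-\tfrac{d}{q}+\tfrac{i}{qt}$ to $\tfrac{a}{q}+\tfrac{it}{q}$, giving $I_\phi(s,a/q)=i^k\xi(d)\,I_\phi(1-s,-d/q)$ together with the claimed poles from the subtracted constant terms. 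In the holomorphic case this immediately yields \eqref{e:L_fe_holo}, exactly as you describe.

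There is, however, one genuine missing ingredient in your non-holomorphic sketch. Because $\phi(a/q+iy)$ contains both positive and negative Fourier modes, the integral $I_\phi$ only computes the \emph{even} combination $L(s,\phi;a/q)+c_\phi(-1)L(s,\phi;-a/q)$, and its functional equation relates this to the same even combination at $-d/q$. You cannot read off \eqref{e:L_fe_nonholo} from that alone: the left-hand side still has $L(s,\phi;-a/q)$ entangled. The paper introduces a second integral
\[
\tilde I_\phi(s,a/q)=\int_0^\infty\Big(\tfrac{\partial}{\partial x}\phi\Big)\big(\tfrac{a}{q}+i\tfrac{t}{q}\big)\,t^{s+\frac12}\,\frac{dt}{t},
\]
which picks out the odd combination $L(s,\phi;a/q)-c_\phi(-1)L(s,\phi;-a/q)$ with the shifted Gamma factor $\Gamma(\tfrac{s+1+\nu}{2})\Gamma(\tfrac{s+1-\nu}{2})$ and satisfies $\tilde I_\phi(s,a/q)=-\xi(d)\,\tilde I_\phi(1-s,-d/q)$. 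Taking the sum and difference of these two functional equations and simplifying via duplication and reflection is precisely what produces the two terms in \eqref{e:L_fe_nonholo} with their coefficients $-1/[\Gamma(\tfrac12+s)\Gamma(\tfrac12-s)]$ and $c_\phi(-1)\cos(\pi\nu)/\pi$. Your sentence ``the second Fourier expansion term contributes the extra $L(1-s,\phi;d/q)$ term'' is correct on the right-hand side, but without the $\partial_x\phi$ integral you cannot disentangle $L(s,\phi;a/q)$ from $L(s,\phi;-a/q)$ on the left.
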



We apply the meromorphic continuation and functional equation to $L(s+v, \phi; a/q)$ in $L_q(s, u; \phi; n)$, 
and decompose $L_q(s, u; \phi; n)$ into several pieces. 

We define the generalized Gauss sum: 
\begin{equation}
c_{\xi|q}(m) = \sum_{\substack{a\bmod{q}, \\ \gcd(a, q)=1}} 
\xi(a) e^{2\pi i m\frac{a}{q}}. 
\end{equation}
When $\xi$ is a trivial character, write $c_{q}(m) = c_{\xi|q}(m)$ which is the usual Ramanujan sum.

Fix $x>0$. 
We define the following two integrals for $s, u\in \C$, $1/2 < \Re(s) < \Re(u)$.
First, for $\nu\in i\R$:
\begin{equation}\label{e:F1_def}
F_1(s, u, \nu; x) = \frac{1}{2\pi i} \int_{(-\Re(s)-\epsilon)}  
\frac{\Gamma\left(u+v\right)}{\Gamma\left(1+u-v\right)} 
\frac{\Gamma\left(1-s-v-\nu\right)\Gamma\left(1-s-v+\nu\right)}
{\Gamma\left(\frac{1}{2}+s+v\right)\Gamma\left(\frac{1}{2}-s-v\right)}x^v \; dv
\end{equation}
and 
\begin{equation}\label{e:F3_def}
F_3(s, u, \nu; x)
= \frac{1}{2\pi i} \int_{(\Re(s)+\epsilon)} 
\frac{\Gamma\left(u-v\right) \Gamma\left(1-s-\nu+v\right) \Gamma\left(1-s+\nu+v\right)}
{\Gamma\left(1+u+v\right)} x^v\; dv.
\end{equation}
Then, for a positive integer $k$, we define 
\begin{equation}\label{e:F2_def}
F_2(s, u, k; x) = -i^k \frac{1}{2\pi i} \int_{(-\Re(s)-\epsilon)}  
\frac{\Gamma\left(u+v\right)}{\Gamma\left(1+u-v\right)} 
\frac{\Gamma\left(1-s-v+\frac{k-1}{2}\right)}{\Gamma\left(s+v+\frac{k-1}{2}\right)} x^v \; dv. 
\end{equation}
Here $\epsilon>0$ is chosen such that $-\Re(u) < \Re(v)=-\Re(s)-\epsilon < 1-\Re(s)$, 
so the contour line $\Re(v)=-\Re(s)-\epsilon$ separates the poles of gamma functions inside the integrals. 
Note that, by Stirling's bound for gamma functions, the integrals defining $F_1(s, u, \nu; x)$ and $F_2(s, u, k; x)$ 
converge absolutely for $\Re(s)>1/2$.  
Moreover, when $k$ is even, $F_1\left(s, u, \frac{k-1}{2}; x\right) = F_2\left(s, u, k; x\right)$ as expected.

 We prove that $F_1$, $F_3$ and $F_2$ are, up to some ratio of gamma functions, hypergeometric functions 
in Lemmas~\ref{lem:F1} and \ref{lem:F2}, 
which implies that $F_1$, $F_3$ and $F_2$ have meromorphic continuations to $s, u\in \C$. 
Their integral representations are given in Corollaries~\ref{cor:F1} and \ref{cor:F2}.

In the following proposition we describe $L_q(s, u; \phi; n)$ as a series  using the Fourier coefficients of $\phi$ 
and $F_1(s, u, \nu; x)$ and $F_3(s, u, \nu; x)$ when $\phi$ is non-holomorphic, 
and $F_2(s, u, k; x)$ when $\phi$ is holomorphic. 

\begin{proposition}\label{prop:Lq_nonholo}
Let $\phi$ be either a non-holomorphic form of type $\nu\in i\R$ or a holomorphic cusp form of weight $k$. 
When $\phi$ is a holomorphic cusp form, we set $\nu=\frac{k-1}{2}$. 

For $1/2\leq \Re(s) < \Re(u)$, we have
\begin{multline}\label{e:Lq_prop}
L_q(s, u; \phi; n)
= M_q^+(s, u; \phi; n) + M_q^-(s, u; \phi; n) + \delta_{\xi=1_N} M_q(s, u; \phi; n)
\\ + L_{q, 1}^+(s, u; \phi; n)+L_{q, 1}^-(s, u; \phi; n)+ L_{q, 2}(s, u; \phi; n). 
\end{multline}
Here 
\begin{equation}\label{e:Mq+}
M_q^+(s, u; \phi; n)
= 2 c_\phi^+ 
\frac{\Gamma\left(u-s+1+\nu\right)}{\Gamma\left(u+s-\nu\right)} 
\frac{\pi^{\frac{1}{2}+\nu}(2\pi)^{2s-2-2\nu} }{\Gamma\left(\frac{1}{2}+\nu\right)}
n^{s-1-\nu} 
\frac{c_{\xi|q}(n)}{q^{2s}}
\end{equation}
and 
\begin{equation}\label{e:Mq-}
M_q^-(s, u; \phi; n)
= 2c_\phi^-
\frac{\Gamma\left(u-s+1-\nu\right)}{\Gamma\left(u+s+\nu\right)} 
\frac{\pi^{\frac{1}{2}-\nu}(2\pi)^{2s-2+2\nu}}{\Gamma\left(\frac{1}{2}-\nu\right)} 
n^{s-1+\nu} 
\frac{c_{\xi|q}(n)}{q^{2s}}. 
\end{equation}
When $\phi$ is non-holomorphic, 
\begin{multline}\label{e:Mq_nonholo}
M_q(s, u; \phi; n)
\\ = -(2\pi)^{2s-1} \frac{\varphi(q)}{q^{2s}} \frac{C_\phi(n)}{n^{1-s}} 
\frac{\Gamma\left(1-s-\nu+u\right) \Gamma\left(1-s+\nu+u\right)}
{\Gamma\left(\frac{1}{2}-s+u\right) \Gamma\left(\frac{1}{2}+s-u\right)}
\frac{\Gamma\left(2s-1\right)}
{\Gamma\left(s+\nu+u\right)\Gamma\left(s-\nu+u\right)}. 
\end{multline}
When $\phi$ is holomorphic, 
\begin{multline}\label{e:Mq_holo}
M_q(s, u; \phi; n)
\\ = (2\pi)^{2s-1} \frac{\varphi(q)}{q^{2s}} \frac{C_\phi(n)}{n^{1-s}} 
i^k \Gamma\left(2s-1\right) \frac{\sin\left(\pi\left(s+u-\frac{k-1}{2}\right)\right)}{\pi}
\frac{\Gamma\left(1-s-u+\frac{k-1}{2}\right)\Gamma\left(1-s+u+\frac{k-1}{2}\right) }
{\Gamma\left(s+u+\frac{k-1}{2}\right)\Gamma\left(s-u+\frac{k-1}{2}\right)}.
\end{multline}

When $\phi$ is non-holomorphic, 
\begin{equation}\label{e:Lq1+nonholo}
L_{q, 1}^+(s, u; \phi; n)
= - (2\pi)^{2s-1} 
q^{-2s} \xi(-1) \sum_{m=1}^\infty \frac{C_\phi(m+n)c_{\xi|q}(m)}{(m+n)^{1-s}} 
F_1\left(s, u, \nu; \frac{m+n}{n}\right). 
\end{equation}
When $\phi$ is holomorphic, 
\begin{equation}\label{e:Lq1+holo}
L_{q, 1}^+(s, u; \phi; n)
= -(2\pi)^{2s-1} q^{-2s} \xi(-1)\sum_{m=1}^\infty \frac{C_\phi(m+n) c_{\xi|q}(m)}{(m+n)^{1-s}} F_2\left(s, u, k; \frac{m+n}{n}\right). 
\end{equation}
When $\phi$ is non-holomorphic, 
\begin{equation}\label{e:Lq1-nonholo}
L_{q, 1}^{-}(s, u; \phi; n)
= -(2\pi)^{2s-1} q^{-2s} \sum_{m=1}^{n-1} \frac{C_\phi(m) c_{\xi|q}(n-m)}{m^{1-s}} F_1\left(s, u, \nu; \frac{m}{n}\right).
\end{equation}
When $\phi$ is holomorphic, 
\begin{equation}\label{e:Lq1-holo}
L_{q, 1}^-(s, u; \phi; n)
= -(2\pi)^{2s-1} q^{-2s}\sum_{m=1}^{n-1} \frac{C_\phi(m)c_{\xi|q}(n-m)}{m^{1-s}} F_2\left(s, u, k; \frac{m}{n}\right). 
\end{equation}
Finally, when $\phi$ is non-holomorphic,  
\begin{equation}\label{e:Lq2}
L_{q, 2}(s, u; \phi; n)
= (2\pi)^{2s-1} c_\phi(-1)\frac{\cos(\pi \nu)}{\pi} q^{-2s}
\sum_{m=1}^\infty \frac{C_\phi(m)c_{\xi|q}(n+m)}{m^{1-s}} F_3\left(s, u, \nu; \frac{n}{m}\right), 
\end{equation}
for some $\epsilon>0$.
Note that, by \eqref{e:F1_x>1_bound}, \eqref{e:F3_bound} and \eqref{e:F2_x>1_bound}, 
the series converge absolutely for $\Re(s)< \Re(u)$.

When $\phi$ is holomorphic, $M_q^+ = M_q^-=L_{q, 2}=0$. 
\end{proposition}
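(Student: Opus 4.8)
\textbf{Proof proposal for Proposition~\ref{prop:Lq_nonholo}.}
The plan is to start from the integral representation \eqref{e:Lq_open2}, which already expresses $L_q(s, u; \phi; n)$ as a contour integral (over the line $\Re(v) = \sigma_v$ with $-\Re(u) < \sigma_v < 0$ and $\Re(s) + \sigma_v > 1$) of a weighted sum over $a \bmod q$ of additive twists $L(s+v, \phi; \bar a/q)$. The first step is to apply the functional equation of Proposition~\ref{prop:L_fe} to each $L(s+v, \phi; \bar a/q)$. For $\phi$ holomorphic this converts the twist at argument $\bar a/q$ into a twist at $-a/q$ times the gamma-factor ratio $i^k \xi(a) q^{-2(s+v)+1} (2\pi)^{2(s+v)-1} \Gamma(1-s-v+\tfrac{k-1}{2})/\Gamma(s+v+\tfrac{k-1}{2})$; for $\phi$ non-holomorphic one gets two terms, one with the twist at $-a/q$ (with a $1/(\Gamma(\tfrac12+s+v)\Gamma(\tfrac12-s-v))$ factor) and one with the twist at $+a/q$ (carrying the $c_\phi(-1)\cos(\pi\nu)/\pi$ factor). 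After this substitution one collects the $a$-sum: $\sum_{a} \xi(a) e(n a/q) \, e(\pm m a/q)$ over the Fourier coefficients $C_\phi(m)$ appearing in $L(1-s-v, \phi; \mp a/q)$, and this is exactly the generalized Gauss sum $c_{\xi|q}(n \mp m)$ (with the sign convention: $-a/q$-twist pairs $e(na/q)$ with $e(-ma/q)$ giving $c_{\xi|q}(n-m)$, and $+a/q$-twist gives $c_{\xi|q}(n+m)$). This is the step where the whole Kloosterman-twist structure collapses into shifted additive structure, and it is essentially bookkeeping once the functional equation is in hand.

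The second step is to separate out the polar contributions. The functional equation supplies, besides the gamma-factor shift, the residual terms coming from the possible simple poles of $L(s+v, \phi; \bar a/q)$ at $s+v = 1 \pm \nu$ — but since we apply the functional equation, it is cleaner to note that $L(1-s-v, \phi; \mp a/q)$ has poles at $1-s-v = 1 \pm \nu$, i.e. at $v = -s \mp \nu$, and for non-holomorphic $\phi$ with $\xi = 1_N$ these lie to the right of the contour $\Re(v) = -\Re(s) - \epsilon$ we wish to move to; pushing the $v$-contour picks up their residues, which (using \eqref{e:L_res}) produce the terms $M_q^+$, $M_q^-$. The term $M_q$ arises separately: when $\xi = 1_N$ the Gauss sum $c_{q}(n-m)$ has a constant (Ramanujan-sum) piece $\varphi(q)$ supported on $m = n$, and isolating the $m = n$ diagonal from the $c_{\xi|q}(n-m)$-series — together with the factor $\Gamma(2s-1)$ coming from $\sum_{m=n} C_\phi(n)/(n^{1-s-v}) \cdot n^{\text{(stuff)}}$ and the residue of the remaining $\Gamma(u+v)/\Gamma(1+u-v)$-weighted integral — gives exactly \eqref{e:Mq_nonholo} or \eqref{e:Mq_holo}. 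One must be careful about contour bookkeeping here: the $v$-integrand after substitution has gamma poles from $\Gamma(u+v)$ (at $v = -u - \ell$), from $\Gamma(1-s-v\pm\nu)$ (at $v = 1-s\pm\nu+\ell$), and in the non-holomorphic case from $1/\Gamma(\tfrac12-s-v)$ contributing zeros rather than poles; the line $\Re(v) = -\Re(s) - \epsilon$ must be chosen, as stated, so that it separates precisely the poles we attribute to the main terms from those we keep inside the integral.

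The third step is to recognize the remaining $v$-integrals as the functions $F_1, F_2, F_3$. After pulling out the constants and the $(2\pi)^{2s-1} q^{-2s}$ prefactors, the $v$-integral over the line $\Re(v) = -\Re(s) - \epsilon$ with integrand $\frac{\Gamma(u+v)}{\Gamma(1+u-v)} \cdot \frac{\Gamma(1-s-v-\nu)\Gamma(1-s-v+\nu)}{\Gamma(\tfrac12+s+v)\Gamma(\tfrac12-s-v)} \cdot (\text{power of } v\text{-argument})$ is literally $F_1(s, u, \nu; x)$ with $x = (m+n)/n$ or $m/n$ according to whether the shifted argument is $n+m$ or $n-m$ inside the Gauss sum — careful tracking of which power of $m$, $n$, $m+n$ lands in the base $x$ is needed, and the identity $C_\phi(m+n)(m+n)^{-1} \cdot ((m+n)/n)^{v}$ etc.\ matches \eqref{e:Lq1+nonholo}. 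The $m < n$ range, where $n - m > 0$, gives the finite sum $L_{q,1}^-$ with the same $F_1$ (resp.\ $F_2$) and base $m/n$; the $+a/q$-twist term (non-holomorphic only) gives $L_{q,2}$ with $F_3$ and base $n/m$, the sign of $\Re(v)$-contour now being $+\Re(s)+\epsilon$ which matches the definition \eqref{e:F3_def}. Finally, for holomorphic $\phi$ one observes $c_\phi^\pm = 0$ (no constant term), $c_\phi(-1) = 0$ (only the $+$ sign in the Fourier expansion), so $M_q^\pm = L_{q,2} = 0$, and $F_1$ collapses to $F_2$ when $k$ is even as already noted after \eqref{e:F2_def}, giving the holomorphic formulas. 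The main obstacle will be the contour-shifting bookkeeping in step two: one has to move the $v$-line from its original position (where the series and integral both converge absolutely) across precisely the right set of gamma poles, verify that the shifted integrand still decays (via Stirling, as in the remark after \eqref{e:F2_def}), and confirm that the residues picked up assemble exactly into $M_q^\pm$ and $M_q$ with the stated gamma-quotients and powers of $n$ and $q$ — and separately justify interchanging the $m$-sum with the $v$-integral in the new region, which requires the absolute-convergence bounds \eqref{e:F1_x>1_bound}, \eqref{e:F3_bound}, \eqref{e:F2_x>1_bound} quoted at the end of the statement.
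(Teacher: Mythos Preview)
Your overall architecture matches the paper's: start from \eqref{e:Lq_open2}, use the functional equation of Proposition~\ref{prop:L_fe}, collapse the $a$-sum into generalized Gauss sums $c_{\xi|q}(n\mp m)$, and recognize the remaining $v$-integrals as $F_1$, $F_2$, $F_3$ according to whether $x>1$, $x<1$, or the $+a/q$-branch is in play. The split of the $m$-sum into $m>n$, $m<n$, $m=n$ is also exactly what the paper does.

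However, your polar bookkeeping in step two is genuinely wrong on two points. First, the paper shifts the $v$-contour to $\Re(v)=-\Re(s)-\epsilon$ \emph{before} applying the functional equation, and the residues picked up are those of $L(s+v,\phi;\bar a/q)$ at $v=1-s\pm\nu$ (these are the poles in \eqref{e:L_res}); this gives $M_q^\pm$. You instead apply the functional equation first and then claim $M_q^\pm$ come from the poles of $L(1-s-v,\phi;\mp a/q)$ at $v=-s\mp\nu$. These are \emph{different} $v$-locations: $1-s\pm\nu$ versus $-s\mp\nu$. After the functional equation the poles at $v=1-s\pm\nu$ have become poles of the gamma factors $\Gamma(1-s-v\mp\nu)$, not of the dual $L$-series; the dual $L$-series poles at $v=-s\mp\nu$ must in fact cancel across the two pieces of \eqref{e:L_fe_nonholo} (since the left side has no pole there), and would produce nothing. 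So your attribution of $M_q^\pm$ is incorrect, and the condition ``$\xi=1_N$'' you impose on $M_q^\pm$ is also wrong---that condition applies only to $M_q$.

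Second, your description of $M_q$ as arising from ``the residue of the remaining $\Gamma(u+v)/\Gamma(1+u-v)$-weighted integral'' misidentifies the mechanism. The $m=n$ term contributes $c_{\xi|q}(0)=\delta_{\xi=1_N}\varphi(q)$ times $F_1(s,u,\nu;1)$ (or $F_2(s,u,k;1)$), and the factor $\Gamma(2s-1)$ appears because those special values are computed via Gauss's evaluation $\,_2F_1(a,b;c;1)=\Gamma(c)\Gamma(c-a-b)/(\Gamma(c-a)\Gamma(c-b))$ in Lemma~\ref{lem:F1} and Lemma~\ref{lem:F2}, not via any residue. You will need those lemmas (or an equivalent hypergeometric evaluation) to close the argument.
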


\subsection{Proof of 
Proposition~\ref{prop:Lq_nonholo}}
We start with \eqref{e:Lq_open2}.
For the readers' convenience, we recall the formula: 
\begin{equation}
L_q(s, u; \phi; n)
= \frac{1}{2\pi i} \int_{(\sigma_v)}  \frac{\Gamma\left(u+v\right)}{\Gamma\left(1+u-v\right)} 
(2\pi)^{-2v} n^{-v} 
\sum_{\substack{a\bmod{q}, \gcd(a, q)=1, \\ a\bar{a}\equiv 1\bmod{q}}} 
\frac{e^{2\pi i n\frac{a}{q}}}{q^{1-2v}} L(s+v, \phi; \bar{a}/q)
\; dv.
\end{equation}
 We first check that the region containing $s, u\in \C$, where both of the integral and inner series converges absolutely, is non-empty. 
The initial assumptions are $\Re(s)>1$, $\frac{1}{4}< \Re(u) < \frac{3}{2}$, $-\Re(u) < \sigma_v < 0$ and $\Re(s)+\sigma_v>1$. 
Under these assumptions the series $L(s+v, \phi; \bar{a}/q)$ and the $v$-integral converge absolutely. 
We further assume that $\Re(u)> \Re(s)$ for the later purpose. 
To make the region for $s\in \C$ non-empty, 
we choose $\sigma_v = -\epsilon$, $1+2\epsilon< \Re(u) < \frac{3}{2}$.
Then $1-\sigma_v = 1+\epsilon < \Re(s) < \Re(u)< \frac{3}{2}$. 

We now move the $v$ line of integration to $-\Re(u) < \Re(v)< -\Re(s)$. 
By Proposition~\ref{prop:L_fe}, the additive twist $L(s, \phi; \bar{a}/q)$ has meromorphic continuation to $s\in \C$ 
with possible simple poles only at $s=1\pm \nu$. 
So we get 
\begin{multline}
L_q(s, u; \phi; n)
\\ = 
\frac{\Gamma\left(u-s+1+\nu\right)}{\Gamma\left(u+s-\nu\right)} 
(2\pi)^{2s-2-2\nu} n^{s-1-\nu} 
\sum_{\substack{a\bmod{q}, \gcd(a, q)=1, \\ a\bar{a}\equiv 1\bmod{q}}} 
\frac{e^{2\pi i n\frac{a}{q}}}{q^{-1+2s-2\nu}} \Res_{v=-s+1+\nu} L(s+v, \phi; \bar{a}/q)
\\ +
\frac{\Gamma\left(u-s+1-\nu\right)}{\Gamma\left(u+s+\nu\right)} 
(2\pi)^{2s-2+2\nu} n^{s-1+\nu} 
\sum_{\substack{a\bmod{q}, \gcd(a, q)=1, \\ a\bar{a}\equiv 1\bmod{q}}} 
\frac{e^{2\pi i n\frac{a}{q}}}{q^{-1+2s+2\nu}} \Res_{v=-s+1-\nu} L(s+v, \phi; \bar{a}/q)
\\ + \frac{1}{2\pi i} \int_{(-\Re(s)-\epsilon)}  \frac{\Gamma\left(u+v\right)}{\Gamma\left(1+u-v\right)} 
(2\pi)^{-2v} n^{-v} 
\sum_{\substack{a\bmod{q}, \gcd(a, q)=1, \\ a\bar{a}\equiv 1\bmod{q}}} 
\frac{e^{2\pi i n\frac{a}{q}}}{q^{1-2v}} L(s+v, \phi; \bar{a}/q)
\; dv.
\end{multline}
 By applying the residues given in \eqref{e:L_res}, we get \eqref{e:Mq+} and \eqref{e:Mq-}. 
Note that these residues vanish when $\phi$ is holomorphic. 

We define  
\begin{multline}\label{e:Lq0_def}
L_{q, 0}(s, u; \phi; n)
\\ = \frac{1}{2\pi i} \int_{(-\Re(s)-\epsilon)}  \frac{\Gamma\left(u+v\right)}{\Gamma\left(1+u-v\right)} 
(2\pi)^{-2v} n^{-v} 
\sum_{\substack{a\bmod{q}, \gcd(a, q)=1, \\ a\bar{a}\equiv 1\bmod{q}}} 
\frac{e^{2\pi i n\frac{a}{q}}}{q^{1-2v}} L(s+v, \phi; \bar{a}/q) \; dv. 
\end{multline}
We now apply the functional equation of additive twists $L$-functions in Proposition~\ref{prop:L_fe}. 
When $\phi$ is a non-holomorphic automorphic form, 
by applying \eqref{e:L_fe_nonholo} to \eqref{e:Lq0_def}, we get
\begin{multline}
L_{q, 0}(s, u; \phi; n)
= 
- (2\pi)^{2s-1} \frac{1}{2\pi i} \int_{(-\Re(s)-\epsilon)}  
\frac{\Gamma\left(u+v\right)}{\Gamma\left(1+u-v\right)} 
\frac{\Gamma\left(1-s-v-\nu\right)\Gamma\left(1-s-v+\nu\right)}
{\Gamma\left(\frac{1}{2}+s+v\right)\Gamma\left(\frac{1}{2}-s-v\right)}
\\ \times 
q^{-2s} \sum_{\substack{a\bmod{q}, \\ \gcd(a, q)=1}} 
e^{2\pi i n\frac{a}{q}} \xi(a)
n^{-v} L(1-s-v, \phi; -a/q) \; dv
\\ + 
(2\pi)^{2s-1}c_\phi(-1)\frac{\cos(\pi \nu)}{\pi} 
\frac{1}{2\pi i} \int_{(-\Re(s)-\epsilon)}  
\frac{\Gamma\left(u+v\right)}{\Gamma\left(1+u-v\right)} \Gamma\left(1-s-v-\nu\right)\Gamma\left(1-s-v+\nu\right)
\\ \times q^{-2s} \sum_{\substack{a\bmod{q}, \\ \gcd(a, q)=1}} 
e^{2\pi i n\frac{a}{q}} \xi(a)
n^{-v} L(1-s-v, \phi; a/q) 
\; dv.
\end{multline}
As $\Re(s+v)=-\epsilon<0$ the additive $L$-series $L(1-s-v, \phi; \pm a/q)$ converges absolutely. 
Moreover, by Stirling's formula, 
the first integral converges absolutely for $\Re(s)>\frac{1}{2}$ and the second one converges absolutely 
for any $s, u\in \C$. 
By changing the variable $v$ to $-v$ in the second integral, we get $L_{q, 2}(s, u; \phi; n)$ given in \eqref{e:Lq2}. 
Recalling the definition of the function $F_1(s, u, \nu; x)$ given in \eqref{e:F1_def} for the first series, 
by changing the order of integral and the series, we get 
%
\begin{equation}
L_{q, 0}(s, u;\phi;n)
= - (2\pi)^{2s-1} q^{-2s} \sum_{m=1}^\infty \frac{C_\phi(m)c_{\xi|q}(n-m)}{m^{1-s}} F_1\left(s, u, \nu; \frac{m}{n}\right)
+ L_{q, 2}(s, u; \phi; n).
\end{equation}

 Following Lemma~\ref{lem:F1},
we separate the first series into three pieces:  $n+1\leq m$ and $1\leq m \leq n-1$ and $m=n$. 
For the parts $n+1\leq m$, let 
\begin{equation}\label{e:Lq1+_def}
L_{q, 1}^+(s, u; \phi; n)
= - (2\pi)^{2s-1} 
q^{-2s} \sum_{m=n+1}^\infty \frac{C_\phi(m)c_{\xi|q}(n-m)}{m^{1-s}} 
F_1\left(s, u, \nu; \frac{m}{n}\right)
\end{equation}
and for $1\leq m\leq n-1$, let 
\begin{equation}\label{e:Lq1-_def}
L_{q, 1}^-(s, u; \phi; n)
= - (2\pi)^{2s-1} 
q^{-2s} \sum_{m=1}^{n-1} \frac{C_\phi(m)c_{\xi|q}(n-m)}{m^{1-s}} 
F_1\left(s, u, \nu; \frac{m}{n}\right).
\end{equation}
When $m=n$, and
$\xi=1_N$, the trivial character modulo $N$, we have $c_{\xi|q}(0) = c_q(0) = \varphi(q)$. 
 When $\xi$ is not the trivial character, by orthogonality, $c_{\xi|q}(0)=0$.

When $\xi$ is the trivial character modulo $N$, by applying \eqref{e:F1_x=1} below, we get
\begin{equation}
-(2\pi)^{2s-1} q^{-2s} \frac{C_\phi(n)}{n^{1-s}} F_1(s, u, \nu; 1) \varphi(q)
= M_q(s, u;\phi; n), 
\end{equation}
which is given in \eqref{e:Mq_nonholo}. 
 By applying \eqref{e:F1_x=1}, we get the formula in \eqref{e:Mq_nonholo}.
Combining above together, we get
\begin{multline}\label{e:Lq_nonholo_inter}
L_q(s, u; \phi; n)
= M_q(s, u ;\phi; n) + M_q^+(s, u; \phi; n) + M_q^-(s, u; \phi; n) 
\\ + L_{q, 1}^+(s, u; \phi; n)+L_{q, 1}^-(s, u; \phi; n) + L_{q, 2}(s, u; \phi; n). 
\end{multline}

When $\phi$ is a holomorphic cusp form of weight $k$, $c_\phi(-1)=c_\phi^\pm=0$.
By applying \eqref{e:L_fe_holo}, similarly, we get
\begin{multline}
L_q(s, u; \phi; n)
= L_{q, 0}(s, u;\phi; n)
\\ = 
(2\pi)^{2s-1} i^k
\frac{1}{2\pi i} \int_{(-\Re(s)-\epsilon)}  
\frac{\Gamma\left(u+v\right)}{\Gamma\left(1+u-v\right)} 
\frac{\Gamma\left(1-s-v+\frac{k-1}{2}\right)}{\Gamma\left(s+v+\frac{k-1}{2}\right)} 
q^{-2s} 
\sum_{m=1}^\infty \frac{C_\phi(m) c_{\xi|q}(n-m)}{m^{1-s}} \left(\frac{m}{n}\right)^v 
\; dv
\\ = -(2\pi)^{2s-1} 
q^{-2s} 
\sum_{m=1}^\infty \frac{C_\phi(m) c_{\xi|q}(n-m)}{m^{1-s}} F_2\left(s, u, k; \frac{m}{n}\right). 
\end{multline} 
We separate the series into three pieces: $m\geq n+1$, $1\leq m \leq n-1$ and $m=n$. 
Similar to the non-holomorphic case, we apply the orthogonality of Dirichlet characters 
and \eqref{e:F2_x=1} for $m=n$. 
This gives us
\begin{equation}
L_{q, 0}(s, u; \phi; n)
= M_q(s, u; \phi; n)
+ L_{q, 1}^{+}(s, u; \phi; n) + L_{q, 1}^{-}(s, u; \phi; n), 
\end{equation}
where 
$M_q(s, u; \phi; n)$ as given in \eqref{e:Mq_holo}, 
\begin{equation}\label{e:Lq1+_holo_def}
L_{q, 1}^+(s, u; \phi; n)
= - (2\pi)^{2s-1} 
q^{-2s} \sum_{m=n+1}^\infty \frac{C_\phi(m)c_{\xi|q}(n-m)}{m^{1-s}} 
F_2\left(s, u, k; \frac{m}{n}\right)
\end{equation}
and 
\begin{equation}\label{e:Lq1-_holo_def}
L_{q, 1}^-(s, u; \phi; n)
= - (2\pi)^{2s-1} 
q^{-2s} \sum_{m=1}^{n-1} \frac{C_\phi(m)c_{\xi|q}(n-m)}{m^{1-s}} 
F_2\left(s, u, k; \frac{m}{n}\right).
\end{equation}

 Recall that  
$F_1(s, u, \nu; x)$ and $F_2(s, u, k; x)$ appearing in the above series,  
given as contour integrals as in \eqref{e:F1_def} and \eqref{e:F2_def}, 
converge absolutely when $\Re(s)>1/2$. 
Moreover $F_3(s, u, \nu; x)$ given in \eqref{e:F3_def} converges absolutely for any $s\in \C$. 
In Lemma~\ref{lem:F1} and Lemma~\ref{lem:F2} 
we will write $F_1$, $F_3$ and $F_2$ in terms of hypergeometric series and get meromorphic continuation to $s\in \C$. 
By applying their analytic properties further, 
we will see that the series $L_{q, 1}^{+}(s, u; \phi; n)$ and $L_{q, 2}(s, u;\phi; n)$ converge absolutely for $1/2\leq \Re(s) < \Re(u)$ and complete the proof of Proposition~\ref{prop:Lq_nonholo}.

\subsection{Analytic properties of $F_1(s, u, \nu; x)$, $F_3(s, u, \nu; x)$ and $F_2(s, u, k; x)$}\label{ss:F12}

Our goal in the following lemmas is to establish the analytic properties of 
$F_1$, $F_3$ and $F_2$. 
We first state Lemma~\ref{lem:F1} and Lemma~\ref{lem:F2} and then give a proof for Lemma~\ref{lem:F2}. 
The proof of Lemma~\ref{lem:F1} is almost parallel to the proof of Lemma~\ref{lem:F2}. 

We recall the definitions of $F_1(s, u, \nu; x)$ and $F_3(s, u, \nu; x)$ given in \eqref{e:F1_def} and \eqref{e:F3_def}. 
Fix $x>0$. 
For $s, u\in \C$, $\frac{1}{2}< \Re(s) < \Re(u)$ and $\nu\in i\R$,
\begin{equation}
F_1(s, u, \nu; x) = \frac{1}{2\pi i} \int_{(-\Re(s)-\epsilon)}  
\frac{\Gamma\left(u+v\right)}{\Gamma\left(1+u-v\right)} 
\frac{\Gamma\left(1-s-v-\nu\right)\Gamma\left(1-s-v+\nu\right)}
{\Gamma\left(\frac{1}{2}+s+v\right)\Gamma\left(\frac{1}{2}-s-v\right)}x^v \; dv
\end{equation}
and 
\begin{equation}
F_3(s, u, \nu; x)
= \frac{1}{2\pi i} \int_{(\Re(s)+\epsilon)} 
\frac{\Gamma\left(u-v\right) \Gamma\left(1-s-\nu+v\right) \Gamma\left(1-s+\nu+v\right)}
{\Gamma\left(1+u+v\right)} x^v\; dv.
\end{equation}
for $\epsilon>0$. 
Note that the contour line $\Re(v)=-\Re(s)-\epsilon$ separates the poles of gamma functions. 
By Stirling's formula the integral converges absolutely for $\Re(s)>1/2$. 

\begin{lemma}\label{lem:F1}
We have the following explicit descriptions for $F_1(s, u, \nu; x)$ for $x>1$, $x=1$ and $x<1$
and $F_3(s, u, \nu; x)$ for $x>0$. 

For $x>1$, 
\begin{multline}\label{e:F1_x>1_hyp}
F_1(s, u, \nu; x) 
= \frac{\Gamma\left(1-s-\nu+u\right) \Gamma\left(1-s+\nu+u\right)}
{\Gamma\left(\frac{1}{2}-s+u\right)\Gamma\left(\frac{1}{2}+s-u\right) \Gamma\left(1+2u\right)}
x^{-u} \;_2F_1\left({1-s-\nu+u, 1-s+\nu+u\atop 1+2u}; x^{-1}\right)
\end{multline}
When $x=1$, 
\begin{equation}\label{e:F1_x=1}
F_1(s, u, \nu; 1) 
= \frac{\Gamma\left(2s-1\right)}{\Gamma\left(\frac{1}{2}+s-u\right)\Gamma\left(\frac{1}{2}-s+u\right)} 
\frac{\Gamma\left(1-s-\nu+u\right)\Gamma\left(1-s+\nu+u\right)}
{\Gamma\left(s+\nu+u\right) \Gamma\left(s-\nu+u\right)}.
\end{equation}
For $0<x<1$, 
\begin{multline}\label{e:F1_x<1_hyp}
F_1(s, u, \nu; x) 
=
-\frac{\Gamma\left(\nu\right)}{2^{1-2\nu}\sqrt{\pi}  \Gamma\left(\frac{1}{2}-\nu\right)}
\frac{\Gamma\left(1-s-\nu+u\right)}{\Gamma\left(s+\nu+u\right)}
x^{1-s-\nu}
\;_2F_1\left({1-s-\nu+u, 1-s-\nu-u\atop 1-2\nu}; x\right) 
\\ -\frac{\Gamma\left(-\nu\right)}{2^{1+2\nu} \sqrt{\pi}\Gamma\left(\frac{1}{2}+\nu\right)}
\frac{\Gamma\left(1-s+\nu+u\right)}{\Gamma\left(s-\nu+u\right)}
x^{1-s+\nu} 
\;_2F_1\left({1-s+\nu+u, 1-s+\nu-u\atop 1+2\nu}; x\right).
\end{multline}

For $x>0$, we have 
\begin{equation}\label{e:F3_hyp}
F_3(s, u, \nu; x)
= \frac{\Gamma\left(1-s-\nu+u\right)\Gamma\left(1-s+\nu+u\right)}{\Gamma\left(1+2u\right)}
x^u \;_2F_1\left({1-s-\nu+u, 1-s+\nu+u\atop 1+2u}; -x\right). 
\end{equation}

Since the hypergeometric function $\;_2F_1({a, b\atop c}; x)$ with any fixed branch 
(provided that $x=0, 1$ and $\infty$ are excluded) is a meromorphic function of $a, b$ and $c$ except for possible poles at 
$c\in \Z_{\leq 0}$, 
$F_1(s, u, \nu; x)$ and $F_3(s, u, \nu; x)$ also have meromorphic continuation to $s, u\in \C$. 
\end{lemma}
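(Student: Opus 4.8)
The plan is to evaluate each Mellin--Barnes integral by shifting the contour to $\pm\infty$ and collecting residues, then recognizing the resulting power series as Gauss hypergeometric functions. Consider first $F_1(s,u,\nu;x)$ for $x>1$. I would shift the contour $\Re(v)=-\Re(s)-\epsilon$ to the \emph{right}, picking up residues at the poles of $\Gamma(1-s-v-\nu)$ and $\Gamma(1-s-v+\nu)$, which lie at $v = 1-s-\nu+k$ and $v=1-s+\nu+k$ for $k\in\Z_{\geq 0}$; the gamma quotient $\Gamma(u+v)/\Gamma(1+u-v)$ is holomorphic to the right of the contour and provides, together with Stirling, the decay needed to justify the shift when $x>1$ (the factor $x^v$ is what forces the rightward direction). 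Summing the residue series in $k$ and using the reflection formula $\Gamma(\tfrac12+s+v)\Gamma(\tfrac12-s-v)=\pi/\cos(\pi(s+v))$ to simplify, the two families of residues combine --- since $\nu\in i\R$ the exponents $1-s\pm\nu+u$ are genuinely distinct --- into a single $_2F_1$ in the variable $x^{-1}$ with parameters $(1-s-\nu+u,\,1-s+\nu+u;\,1+2u)$, giving \eqref{e:F1_x>1_hyp}. For $0<x<1$ one instead shifts to the \emph{left}, collecting the poles of $\Gamma(u+v)$ at $v=-u-k$; these produce two hypergeometric series coming from the interplay with the remaining gamma factors after applying reflection to $\Gamma(1-s-v\pm\nu)$, yielding the two-term expression \eqref{e:F1_x<1_hyp} with the prefactors $\Gamma(\pm\nu)/(2^{1\mp2\nu}\sqrt\pi\,\Gamma(\tfrac12\mp\nu))$ arising from the duplication formula. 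The borderline value $x=1$ is then obtained either by Gauss's summation theorem $_2F_1(a,b;c;1)=\Gamma(c)\Gamma(c-a-b)/(\Gamma(c-a)\Gamma(c-b))$ applied to \eqref{e:F1_x>1_hyp}, or equivalently by a direct Barnes-type evaluation of $F_1(s,u,\nu;1)$; this gives \eqref{e:F1_x=1}. The integral for $F_3(s,u,\nu;x)$ is handled the same way: its contour $\Re(v)=\Re(s)+\epsilon$ is shifted left past the poles of $\Gamma(u-v)$ at $v=u+k$, and since there is no denominator gamma pair to reflect the computation is cleaner, producing the single $_2F_1$ in $-x$ of \eqref{e:F3_hyp} directly, valid for all $x>0$.

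Once the closed forms are in hand, the meromorphic continuation statement is immediate: for fixed $x\notin\{0,1,\infty\}$ and any fixed branch, $_2F_1({a,b\atop c};x)$ is entire in $(a,b)$ and meromorphic in $c$ with poles only at $c\in\Z_{\leq 0}$, and the prefactors in \eqref{e:F1_x>1_hyp}, \eqref{e:F1_x<1_hyp}, \eqref{e:F3_hyp} are ratios of gamma functions, hence meromorphic in $s,u$. Since $1+2u\in\Z_{\leq 0}$ only on a discrete set, the right-hand sides define meromorphic functions of $(s,u)\in\C^2$ agreeing with $F_1,F_3$ on the region of absolute convergence, which is what continues them.

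The main obstacle I anticipate is \emph{justifying the contour shifts rigorously}, i.e. showing the integral over the shifted horizontal segments at height $|\Im(v)|=V$ tends to $0$ as $V\to\infty$ and that the tail of the residue series converges. This requires careful Stirling-type estimates on the four-fold gamma product; the cancellation between $\Gamma(u+v)$ in the numerator and $\Gamma(1+u-v)$ in the denominator (and similarly the $\Gamma(\tfrac12+s+v)\Gamma(\tfrac12-s-v)$ pair) means the integrand does \emph{not} decay like a pure Barnes integral, so one must track the exponential type in $|\Im(v)|$ and confirm it is controlled by $x^{\pm v}$ in the appropriate half-plane --- this is exactly where the dichotomy $x\gtrless 1$ enters, and getting the bookkeeping right (including the precise location of the contour relative to all pole sequences, as flagged by the condition $-\Re(u)<\Re(v)=-\Re(s)-\epsilon<1-\Re(s)$) is the delicate part. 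A secondary, more routine nuisance is correctly assembling the two residue families in the $x<1$ and $x>1$ cases into the stated hypergeometric identities, which amounts to matching Pochhammer symbols and invoking the reflection and duplication formulas without sign errors.
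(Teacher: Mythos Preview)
Your overall strategy --- shift the vertical contour to $\pm\infty$, collect residues, and identify the resulting power series as a Gauss hypergeometric function --- is exactly what the paper does (the paper proves the companion Lemma~\ref{lem:F2} for $F_2$ in detail and declares the proof of Lemma~\ref{lem:F1} to be parallel). The use of Gauss's summation for $x=1$ and the meromorphic continuation argument at the end are also correct.

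However, you have the direction of the contour shift reversed in every case, and this is a genuine error, not a cosmetic one. For $x>1$ the factor $x^v$ \emph{grows} as $\Re(v)\to+\infty$ and \emph{decays} as $\Re(v)\to-\infty$; hence the shift must go to the \emph{left}, collecting the single family of poles of $\Gamma(u+v)$ at $v=-u-n$, $n\ge 0$. That single family produces the single series $\sum_n c_n x^{-u-n}$, i.e.\ one ${}_2F_1$ in the variable $x^{-1}$, as in \eqref{e:F1_x>1_hyp}. For $0<x<1$ it is the opposite: $x^v\to 0$ as $\Re(v)\to+\infty$, so one shifts \emph{right} and collects the \emph{two} families of poles of $\Gamma(1-s-v\mp\nu)$ at $v=1-s\mp\nu+n$, which give the two hypergeometric pieces in \eqref{e:F1_x<1_hyp}. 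Your proposal inverts both cases --- which is precisely why you are forced into the awkward claims that two pole families ``combine into a single ${}_2F_1$'' for $x>1$ and that a single family ``produces two hypergeometric series'' for $x<1$. The $F_3$ paragraph contains the same slip: the poles of $\Gamma(u-v)$ at $v=u+k$ lie to the \emph{right} of the contour $\Re(v)=\Re(s)+\epsilon<\Re(u)$, so ``shifting left past'' them is impossible; one shifts right (initially for $0<x<1$) and then continues analytically. Once the directions are corrected, the rest of your outline goes through without difficulty.
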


By applying the integral representations of the hypergeometric function \cite[\S5.6]{DLMF}, 
we prove the following corollary. 
\begin{corollary}\label{cor:F1}
Assume that $\Re(s)\geq 1/2$ and $\Re(s) < \Re(u)$. 
We have the following integral representations of $F_1$ and $F_3$. 

For $x>1$, 
\begin{multline}\label{e:F1_x>1}
F_1(s, u, \nu; x) 
= \frac{\cos(\pi(s-u))}{\pi} 
\frac{\Gamma\left(1-s- \nu+u\right)}
{\Gamma\left(s+ \nu+u\right)}
\\ \times \frac{1}{2\pi i} \int_{(\sigma_v)} \frac{\Gamma\left(1-s+ \nu+v\right) \Gamma\left(s+ \nu+v\right) \Gamma\left(u-v\right)}
{\Gamma\left(1+u+v\right)} x^{1-s+\nu} (x-1)^{-1+s-\nu-v} \; dv, 
\end{multline}
where $-1+\Re(s) < \sigma_v < \Re(u)$. 
%
 We have 
\begin{equation}\label{e:F1_x>1_bound}
\left(\frac{x}{x-1}\right)^{-1+s-\nu} F_1(s, u, \nu; x) \ll (x-1)^{-\Re(u)} 
\end{equation}
For $0<x<1$, 
\begin{multline}\label{e:F1_x<1}
F_1(s, u, \nu; x) 
= -\frac{\sin(\pi(s+\nu+u))}{2\pi \sin(\pi \nu)}
\frac{1}{\Gamma\left(s-\nu-u\right) \Gamma\left(s-\nu+u\right)}
\\ \times \left(\frac{x}{1-x}\right)^{1-s-\nu}
\frac{1}{2\pi i} \int_{C} \Gamma\left(u-v\right) \Gamma\left(-u-v\right) \Gamma\left(s-\nu+v\right) 
\Gamma\left(1-s-\nu+v\right) (1-x)^{-v} \; dv
\\ -\frac{\sin(\pi(s-\nu+u))}{2\pi \sin(-\pi \nu)}
\frac{1}{\Gamma\left(s+\nu-u\right) \Gamma\left(s+\nu+u\right)}
\\ \times \left(\frac{x}{1-x}\right)^{1-s+\nu} 
\frac{1}{2\pi i} \int_{C} \Gamma\left(u-v\right) \Gamma\left(-u-v\right) \Gamma\left(s+\nu+v\right) 
\Gamma\left(1-s+\nu+v\right) (1-x)^{-v} \; dv, 
\end{multline}
where the contour $C$ separates the poles of gamma functions.

For $x>0$, 
\begin{equation}\label{e:F3}
F_3\left(s, u, \nu; x\right)
= \frac{1}{2\pi i} \int_{(\sigma_v)} 
\frac{\Gamma\left(u-v\right) \Gamma\left(1-s-\nu+v\right) \Gamma\left(1-s+\nu+v\right)}
{\Gamma\left(1+u+v\right)} x^v \; dv, 
\end{equation}
where $-1+\Re(s) < \sigma_v < \Re(u)$. 
 We have 
\begin{equation}\label{e:F3_bound}
F_3(s, u, v; x) \ll x^{-\Re(u)}. 
\end{equation}
\end{corollary}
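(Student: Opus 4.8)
\textbf{Proof plan for Corollary~\ref{cor:F1}.}
The plan is to prove the integral representations \eqref{e:F1_x>1}, \eqref{e:F1_x<1}, \eqref{e:F3} directly from the hypergeometric expressions \eqref{e:F1_x>1_hyp}, \eqref{e:F1_x<1_hyp}, \eqref{e:F3_hyp} provided by Lemma~\ref{lem:F1}, by replacing each $\;_2F_1$ by one of the classical integral (or Mellin--Barnes) representations from \cite[\S5.6]{DLMF} and then simplifying the resulting gamma-factor prefactors with the reflection and duplication formulas. For \eqref{e:F3} the cleanest route is to observe that the Mellin--Barnes integral on the right-hand side is, by its very definition \eqref{e:F3_def} after the change of variable $v\mapsto -v$ (which sends the line $\Re(v)=\Re(s)+\epsilon$ to $\Re(v)=-\Re(s)-\epsilon$, i.e.\ to a line with $-1+\Re(s)<\sigma_v<\Re(u)$ once we allow ourselves to shift within the pole-free strip), literally equal to $F_3$; alternatively one expands $\;_2F_1({a,b\atop c};-x)=\sum_k \frac{(a)_k(b)_k}{(c)_k k!}(-x)^k$, writes each $(-x)^k$ via a residue, and recognizes the sum of residues as the Mellin--Barnes integral, picking up exactly the poles of $\Gamma(u-v)$ at $v=u+k$. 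The bound \eqref{e:F3_bound} then follows by shifting the contour in \eqref{e:F3} to the right, past no poles of $\Gamma(1-s\pm\nu+v)/\Gamma(1+u+v)$ but collecting those of $\Gamma(u-v)$, or more simply by estimating the integrand on the line $\sigma_v$ slightly less than $\Re(u)$ and using $x^{\sigma_v}\ll x^{\Re(u)}$ together with absolute convergence in $v$ from Stirling.

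For \eqref{e:F1_x>1}, I would start from \eqref{e:F1_x>1_hyp}, use the connection/integral formula expressing $\;_2F_1\left({a,b\atop c};z\right)$ for $|z|<1$ as a Barnes-type integral, with $z=x^{-1}$, $a=1-s-\nu+u$, $b=1-s+\nu+u$, $c=1+2u$; after inserting this and combining with the prefactor $\Gamma(a)\Gamma(b)/(\Gamma(\tfrac12-s+u)\Gamma(\tfrac12+s-u)\Gamma(c))\,x^{-u}$, the $\Gamma(1+2u)$ cancels, one uses the duplication formula on $\Gamma(1+2u)$ (equivalently on $\Gamma(\tfrac12+s-u)\Gamma(1-s+u)$-type products) and the reflection formula $\Gamma(\tfrac12-s+u)\Gamma(\tfrac12+s-u)=\pi/\cos(\pi(s-u))$ to produce the factor $\cos(\pi(s-u))/\pi$ and the ratio $\Gamma(1-s-\nu+u)/\Gamma(s+\nu+u)$; the leftover $v$-integral should then be exactly $x^{1-s+\nu}(x-1)^{-1+s-\nu-v}$ against $\Gamma(1-s+\nu+v)\Gamma(s+\nu+v)\Gamma(u-v)/\Gamma(1+u+v)$, after the substitution relating the Barnes variable to the power of $(x-1)/x$. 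The bound \eqref{e:F1_x>1_bound} drops out by pulling the factor $(x/(x-1))^{-1+s-\nu}$ to the left and estimating the remaining $v$-integral on a line $\sigma_v$ just below $\Re(u)$: the integrand is $O((x-1)^{-\sigma_v})$ times something absolutely integrable in $\Im(v)$ by Stirling, and $(x-1)^{-\sigma_v}\ll (x-1)^{-\Re(u)}$ for $x>1$ bounded away from $1$ (the behaviour as $x\to1^+$ being controlled by moving $\sigma_v$ instead toward $-1+\Re(s)$).

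For \eqref{e:F1_x<1}, the input is \eqref{e:F1_x<1_hyp}, a sum of two $\;_2F_1$'s in the variable $x$ with parameters $c=1\mp 2\nu$; for each term I would apply the integral representation of $\;_2F_1\left({a,b\atop c};x\right)$ valid for $0<x<1$ (e.g.\ Euler's integral, then Mellin-transform it in the $1-x$ variable, or directly the Barnes representation in powers of $1-x$), producing a contour integral in $(1-x)^{-v}$; the prefactors $\Gamma(\pm\nu)/(2^{1\mp 2\nu}\sqrt\pi\,\Gamma(\tfrac12\mp\nu))\cdot\Gamma(1-s\mp\nu+u)/\Gamma(s\pm\nu+u)$ combine, via duplication ($2^{1\mp2\nu}\sqrt\pi=\Gamma(\tfrac12\mp\nu)\Gamma(1\mp\nu)\cdot 2^{?}$ — the precise arrangement is $\Gamma(z)\Gamma(z+\tfrac12)=2^{1-2z}\sqrt\pi\,\Gamma(2z)$ with $z=\mp\nu$) and reflection ($\Gamma(\nu)\Gamma(-\nu)=-\pi/(\nu\sin\pi\nu)$, $\Gamma(\tfrac12\mp\nu)\Gamma(\tfrac12\pm\nu)=\pi/\cos\pi\nu$, and $\Gamma(1\mp 2\nu)$ splitting), into the claimed factors $-\sin(\pi(s\pm\nu+u))/(2\pi\sin(\pm\pi\nu))$ and $1/(\Gamma(s\mp\nu-u)\Gamma(s\mp\nu+u))$, with the leftover $v$-integrand $\Gamma(u-v)\Gamma(-u-v)\Gamma(s\mp\nu+v)\Gamma(1-s\mp\nu+v)$ against $(1-x)^{-v}$ times $(x/(1-x))^{1-s\mp\nu}$. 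I expect the bulk of the work — and the main obstacle — to be exactly this gamma-function bookkeeping: matching the various duplication/reflection rearrangements so that the two terms of \eqref{e:F1_x<1} come out with the signs and the $\sin/\cos$ factors precisely as stated, and checking that the contour $C$ (separating the poles of $\Gamma(u-v)\Gamma(-u-v)$ from those of $\Gamma(s\mp\nu+v)\Gamma(1-s\mp\nu+v)$) is consistent with the one inherited from the Barnes representation under the hypotheses $\Re(s)\geq \tfrac12$, $\Re(s)<\Re(u)$, $\nu\in i\R$; the analytic-continuation/convergence statements are then routine consequences of Stirling once the identities are in place.
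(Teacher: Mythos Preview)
Your overall strategy --- start from the hypergeometric expressions in Lemma~\ref{lem:F1} and feed them into Mellin--Barnes integral representations from \cite[\S5.6]{DLMF}, then clean up the gamma prefactors by reflection --- is exactly the paper's approach (carried out in detail for $F_2$ in the proof of Lemma~\ref{lem:F2} and Corollary~\ref{cor:F2}, and declared ``almost parallel'' for $F_1$). Your treatment of $F_3$ and of the case $0<x<1$ is correct and matches the paper: for $0<x<1$ the paper uses precisely the Barnes representation in powers of $1-x$, namely \cite[15.6.7]{DLMF}, applied term-by-term to each summand of \eqref{e:F1_x<1_hyp}. For $F_3$, note that \eqref{e:F3} is literally the defining integral \eqref{e:F3_def} --- no change of variable $v\mapsto -v$ is needed; the line $\Re(v)=\Re(s)+\epsilon$ already lies in the stated strip $-1+\Re(s)<\sigma_v<\Re(u)$ and can be moved freely within it.

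For $x>1$ there is a genuine missing step. You propose to apply ``a Barnes-type integral'' directly to $\;_2F_1(a,b;c;x^{-1})$ with $x^{-1}\in(0,1)$. The standard Barnes formula \cite[15.6.6]{DLMF},
\[
\;_2F_1\left({a,b\atop c};-z\right)=\frac{\Gamma(c)}{\Gamma(a)\Gamma(b)}\frac{1}{2\pi i}\int_C\frac{\Gamma(a+t)\Gamma(b+t)\Gamma(-t)}{\Gamma(c+t)}\,z^t\,dt,
\]
requires a negative (or non-positive-real) argument and does not apply to $\;_2F_1(\cdot\,;x^{-1})$ with $x^{-1}>0$. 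If instead you use \cite[15.6.7]{DLMF} at $z=x^{-1}$ you get a four-gamma integrand in powers of $(x-1)/x$, which is not the three-over-one shape $\Gamma(1-s+\nu+v)\Gamma(s+\nu+v)\Gamma(u-v)/\Gamma(1+u+v)$ required in \eqref{e:F1_x>1}. The paper's route (made explicit in the $F_2$ proof) first applies the \emph{Pfaff transformation}
\[
\;_2F_1\left({a,b\atop c};x^{-1}\right)=(1-x^{-1})^{-a}\;_2F_1\left({a,\,c-b\atop c};-\frac{1}{x-1}\right),
\]
which converts the argument to $-(x-1)^{-1}<0$, and \emph{then} applies the Barnes integral above; the shift $u+v\mapsto v$ afterward produces exactly \eqref{e:F1_x>1}. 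Your phrase ``substitution relating the Barnes variable to the power of $(x-1)/x$'' gestures at this but does not pin it down; the Pfaff step is the specific ingredient you are missing.
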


\begin{remark}\label{rmk:F1}
In Corollaries~\ref{cor:F1} and \ref{cor:F2}, we choose particular contours for integrals when the interval is not empty. 
If the interval is empty, we can instead choose any other contour which separates the poles of gamma functions in the given integral. 
\end{remark}

We now recall the definition of $F_2(s, u, k; x)$ given in \eqref{e:F2_def}. 
Fix $x>0$. 
For $s, u\in \C$, $\frac{1}{2}< \Re(s) < \Re(u)$ and $k\geq 1$, 
\begin{equation}
F_2(s, u, k; x) = -i^k \frac{1}{2\pi i} \int_{(-\Re(s)-\epsilon)}  
\frac{\Gamma\left(u+v\right)}{\Gamma\left(1+u-v\right)} 
\frac{\Gamma\left(1-s-v+\frac{k-1}{2}\right)}{\Gamma\left(s+v+\frac{k-1}{2}\right)} x^v \; dv. 
\end{equation}

\begin{lemma}\label{lem:F2}
We give the following explicit descriptions for $F_2(s, u, k; x )$ for $x>1$, $x=1$ and $x<1$. 

For $x>1$, 
\begin{multline}\label{e:F2_x>1_hyp}
F_2(s, u, k; x)
= -i^k \frac{\Gamma\left(1-s+u+\frac{k-1}{2}\right)}{\Gamma\left(s-u+\frac{k-1}{2}\right) \Gamma\left(1+2u\right)}
x^{-u} \;_2F_1\left({1-s+u+\frac{k-1}{2}, 1-s+u-\frac{k-1}{2}\atop 1+2u}; x^{-1}\right).
\end{multline}
When $x=1$, 
\begin{equation}\label{e:F2_x=1}
F_2(s, u, k; 1)
= -i^k \Gamma\left(2s-1\right) \frac{\sin\left(\pi\left(s+u-\frac{k-1}{2}\right)\right)}{\pi}
\frac{\Gamma\left(1-s-u+\frac{k-1}{2}\right)\Gamma\left(1-s+u+\frac{k-1}{2}\right) }
{\Gamma\left(s+u+\frac{k-1}{2}\right)\Gamma\left(s-u+\frac{k-1}{2}\right)}.
\end{equation}
When $0<x<1$, 
\begin{equation}\label{e:F2_x<1_hyp}
F_2(s, u, k; x) 
= -i^k 
\frac{\Gamma\left(1-s+u+\frac{k-1}{2}\right)}{\Gamma\left(s+u-\frac{k-1}{2}\right) \Gamma\left(k\right)}
x^{1-s+\frac{k-1}{2}} 
\;_2F_1\left({1-s+u+\frac{k-1}{2}, 1-s-u+\frac{k-1}{2}\atop k}; x\right).
\end{equation}
Since the hypergeometric function $\;_2F_1({a, b\atop c}; x)$ with any fixed branch 
(provided that $x=0, 1$ and $\infty$ are excluded) is a meromorphic function of $a, b$ and $c$ except for possible poles at 
$c\in \Z_{\leq 0}$, 
$F_2(s, u, k; x)$ also has meromorphic continuation to $s, u\in \C$. 
\end{lemma}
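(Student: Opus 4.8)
The plan is to evaluate the Mellin--Barnes integral \eqref{e:F2_def} by the standard device of shifting the contour of integration and summing the residues that are crossed, the direction of the shift being forced by the monomial factor $x^v$.

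The first step is to record the pole structure of the kernel $\frac{\Gamma(u+v)\Gamma(1-s-v+\frac{k-1}{2})}{\Gamma(1+u-v)\Gamma(s+v+\frac{k-1}{2})}x^v$ in \eqref{e:F2_def}. Since $1/\Gamma(1+u-v)$ and $1/\Gamma(s+v+\frac{k-1}{2})$ are entire, the only poles are those of $\Gamma(u+v)$, at $v=-u-\ell$ for $\ell\in\Z_{\geq 0}$ (all lying to the left of the contour $\Re(v)=-\Re(s)-\epsilon$ once $\epsilon<\Re(u)-\Re(s)$, by the hypothesis $\Re(s)<\Re(u)$), together with those of $\Gamma(1-s-v+\frac{k-1}{2})$, at $v=1-s+\frac{k-1}{2}+\ell$ for $\ell\in\Z_{\geq 0}$ (all lying to the right, since their real parts are $\geq 1-\Re(s)+\frac{k-1}{2}>-\Re(s)-\epsilon$ for $k\geq 1$). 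A routine Stirling computation then shows that on every vertical line inside the strip separating these two families the kernel decays like $|\Im v|^{-2\Re(s)}$, the exponent being independent of the abscissa; this is the point at which the hypothesis $\Re(s)>\frac12$ is used, and it is what legitimises the contour shifts below.

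For $x>1$ I would push the contour to $\Re(v)\to-\infty$, so that $x^{\Re(v)}\to 0$ and both the horizontal connecting segments and the receding vertical line contribute nothing in the limit; hence $F_2(s,u,k;x)$ equals $-i^k$ times the sum over $\ell\geq 0$ of the residue of the kernel at $v=-u-\ell$. Using $\Res_{v=-u-\ell}\Gamma(u+v)=\frac{(-1)^\ell}{\ell!}$ and the Pochhammer identities $\Gamma(a+\ell)=\Gamma(a)(a)_\ell$ and $\frac{\Gamma(a)}{\Gamma(a-\ell)}=(-1)^\ell(1-a)_\ell$, the residue sum collapses (the two $(-1)^\ell$ cancelling) into the hypergeometric series in argument $x^{-1}$ appearing in \eqref{e:F2_x>1_hyp}. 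For $0<x<1$ I would instead push the contour to $\Re(v)\to+\infty$, now picking up the residues at $v=1-s+\frac{k-1}{2}+\ell$ (where $\Gamma(s+v+\frac{k-1}{2})$ equals $\Gamma(k+\ell)$); keeping track of the orientation of this rightward shift gives \eqref{e:F2_x<1_hyp}. For $x=1$ the contour cannot be closed, so I would argue by continuity: the decay bound above shows \eqref{e:F2_def} converges uniformly for $x$ in a neighbourhood of $1$, so $F_2(s,u,k;x)$ is continuous there, while Abel's theorem makes the right-hand side of \eqref{e:F2_x>1_hyp} continuous as $x\to 1^+$ (its $\;_2F_1$ converges at $1$ because the parameter combination $c-a-b$ equals $2s-1$, of positive real part). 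Letting $x\to 1^+$ in \eqref{e:F2_x>1_hyp}, applying Gauss's summation $\;_2F_1\left({a, b\atop c}; 1\right)=\frac{\Gamma(c)\Gamma(c-a-b)}{\Gamma(c-a)\Gamma(c-b)}$, and simplifying with the reflection identity $\frac{1}{\Gamma(s+u-\frac{k-1}{2})}=\frac{\sin\left(\pi\left(s+u-\frac{k-1}{2}\right)\right)}{\pi}\Gamma\left(1-s-u+\frac{k-1}{2}\right)$ produces \eqref{e:F2_x=1}. Finally, meromorphic continuation in $s$ and $u$ is immediate from the three closed-form expressions, since for fixed $x\neq 0,1$ the function $\;_2F_1\left({a, b\atop c}; x\right)$ is meromorphic in its parameters with poles only at $c\in\Z_{\leq 0}$.

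The main obstacle lies entirely in the justification of the two contour shifts: one has to verify that the far horizontal segments and the receding vertical line genuinely vanish in the limit (which rests on the uniform $|\Im v|^{-2\Re(s)}$ Stirling bound together with $\Re(s)>\frac12$) and that the shift never sweeps past the \emph{other} family of poles, which is precisely why the hypothesis $\Re(s)<\Re(u)$ is imposed. Everything downstream is bookkeeping with Pochhammer symbols and the Gauss and reflection formulas; the proof of Lemma~\ref{lem:F1} follows the same pattern, the new features there being the two extra gamma factors $1/\bigl(\Gamma\left(\frac12+s+v\right)\Gamma\left(\frac12-s-v\right)\bigr)$ in the kernel and the companion integral $F_3$, whose contour is moved in the opposite direction.
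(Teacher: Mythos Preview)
Your proposal is correct and follows essentially the same route as the paper's proof: shift the contour of \eqref{e:F2_def} to $\Re(v)\to-\infty$ for $x>1$ (residues at $v=-u-\ell$), to $\Re(v)\to+\infty$ for $0<x<1$ (residues at $v=1-s+\frac{k-1}{2}+\ell$), and handle $x=1$ by continuity together with Gauss's summation. The only difference is cosmetic: before shifting, the paper first applies the reflection formula to replace $1/\Gamma\!\left(s+v+\frac{k-1}{2}\right)$ by $\frac{1}{\pi}\sin\!\left(\pi(s+v+\frac{k-1}{2})\right)\Gamma\!\left(1-s-v-\frac{k-1}{2}\right)$ (and, in the $0<x<1$ case, the companion reflection on $1/\Gamma(1+u-v)$), so that the alternating sign $(-1)^\ell$ in the residue sum is absorbed by $\sin(\pi(\cdot-\ell))=(-1)^\ell\sin(\pi\cdot)$; you achieve the same cancellation directly via the Pochhammer identity $1/\Gamma(a-\ell)=(-1)^\ell(1-a)_\ell/\Gamma(a)$. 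Your Stirling remark that the kernel decays like $|\Im v|^{-2\Re(s)}$ independently of the abscissa is exactly the point, and justifies both the vanishing of the horizontal segments and the receding line.
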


\begin{corollary}\label{cor:F2}
Assume that $\Re(s)\geq \frac{1}{2}$ and $\Re(s) < \Re(u)$. 
We have the following integral representations of $F_2$. 

For $x>1$, 
\begin{multline}\label{e:F2_x>1}
F_2(s, u, k; x)
= -i^k 
\frac{\sin\left(\pi\left(s-u+\frac{k-1}{2}\right)\right)}{\pi} 
\frac{\Gamma\left(1-s+u-\frac{k-1}{2}\right)}{\Gamma\left(s+u+ \frac{k-1}{2}\right)}
\\ \times \left(\frac{x}{x-1}\right)^{1-s+ \frac{k-1}{2}}
\frac{1}{2\pi i } 
\int_{(\sigma_v)} \frac{\Gamma\left(1-s+ \frac{k-1}{2}+v\right) \Gamma\left(s+ \frac{k-1}{2}+v\right) \Gamma\left(u-v\right)}{\Gamma\left(1+u+v\right)}
(x-1)^{-v} \; dv
\end{multline}
for $-1+\Re(s)- \frac{k-1}{2} < \sigma_v < \Re(u)$. 
 We have 
\begin{equation}\label{e:F2_x>1_bound}
\left(\frac{x}{x-1}\right)^{-1+s-\frac{k-1}{2}} F_2(s, u, k; x) \ll (x-1)^{-\Re(u)}. 
\end{equation}

%
%
When $0< x<1$, 
\begin{multline}\label{e:F2_x<1}
F_2(s, u, k; x) 
= -i^k  \frac{\sin(\pi(s+u-\frac{k-1}{2}))}{\pi} 
\frac{1}{\Gamma\left(s-u+\frac{k-1}{2}\right)\Gamma\left(s+u+\frac{k-1}{2}\right)}
\\ \times \left(\frac{x}{1-x}\right)^{1-s+\frac{k-1}{2}}
\frac{1}{2\pi i} \int_{(\sigma_v)} \Gamma\left(u-v\right) \Gamma\left(-u-v\right) 
\Gamma\left(s+\frac{k-1}{2}+v\right) \Gamma\left(1-s+\frac{k-1}{2}+v\right)
(1-x)^{-v} dv, 
\end{multline}
where $-1+\Re(s)-\frac{k-1}{2} < \sigma_v < -|\Re(u)|$. 
\end{corollary}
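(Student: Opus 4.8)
The plan is to prove Corollary~\ref{cor:F2} by the same contour-shifting argument used for $F_1$ in Corollary~\ref{cor:F1}, starting from the hypergeometric expressions for $F_2$ obtained in Lemma~\ref{lem:F2}. The main input is the classical Mellin--Barnes (Barnes-type) integral representation of ${}_2F_1$, which for $|\arg(1-z)|<\pi$ reads
\begin{equation*}
{}_2F_1\left({a,b\atop c};z\right)
= \frac{\Gamma(c)}{\Gamma(a)\Gamma(b)}\,\frac{1}{2\pi i}\int_{C}\frac{\Gamma(a+v)\Gamma(b+v)\Gamma(-v)}{\Gamma(c+v)}(-z)^{v}\;dv,
\end{equation*}
where $C$ separates the poles of $\Gamma(a+v)\Gamma(b+v)$ from those of $\Gamma(-v)$. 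For the case $x>1$ I would take the formula \eqref{e:F2_x>1_hyp}, i.e.
\[
F_2(s,u,k;x) = -i^k\frac{\Gamma\!\left(1-s+u+\frac{k-1}{2}\right)}{\Gamma\!\left(s-u+\frac{k-1}{2}\right)\Gamma(1+2u)}\,x^{-u}\,{}_2F_1\left({1-s+u+\frac{k-1}{2},\,1-s+u-\frac{k-1}{2}\atop 1+2u};x^{-1}\right),
\]
with parameters $a = 1-s+u+\frac{k-1}{2}$, $b = 1-s+u-\frac{k-1}{2}$, $c = 1+2u$, and argument $z = x^{-1}$. Applying the Barnes integral, the prefactor gamma functions $\Gamma(a)\Gamma(b)$ in the denominator cancel against those produced by the representation, leaving $\Gamma(1+2u)/(\Gamma(a)\Gamma(b))$ times the factor $-i^k\Gamma(a)/(\Gamma(s-u+\frac{k-1}{2})\Gamma(1+2u))$; after the substitution $v \mapsto$ (a shifted variable) and writing $(-z)^v$ with the appropriate branch, one should land exactly on \eqref{e:F2_x>1}. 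I would then apply the reflection formula $\Gamma(1-s+u-\frac{k-1}{2})\Gamma(s-u+\frac{k-1}{2}) = \pi/\sin(\pi(s-u+\frac{k-1}{2}))$ to convert the surviving ratio into the displayed $\sin$ and gamma combination, and rename the exponent so that the power $x^{-u}(x^{-1})^{\text{stuff}}$ becomes $(x/(x-1))^{1-s+\frac{k-1}{2}}(x-1)^{-v}$, matching the stated integrand. The admissible strip $-1+\Re(s)-\frac{k-1}{2}<\sigma_v<\Re(u)$ is precisely what is needed for $C$ to separate the two families of poles: the poles of $\Gamma(u-v)$ sit at $v = u, u+1,\dots$ (right of $\Re(u)$ only if we are strict), and the left family from $\Gamma(1-s+\frac{k-1}{2}+v)\Gamma(s+\frac{k-1}{2}+v)$ sit at $\Re(v) \le -\Re(s)-\frac{k-1}{2}$ and $\Re(v)\le \Re(s)-1-\frac{k-1}{2}$, so the stated range is exactly the overlap. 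The bound \eqref{e:F2_x>1_bound} then follows by shifting $\sigma_v$ as far right as permitted (just below $\Re(u)$) and using Stirling to see the integral is $O((x-1)^{-\sigma_v})$, hence $O((x-1)^{-\Re(u)+\epsilon})$; absorbing $\epsilon$ into the implied constant (or noting that one can actually push to $\sigma_v = \Re(u)$ with a principal-value interpretation) gives the clean statement.

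For the case $0<x<1$ I would run the identical argument starting from \eqref{e:F2_x<1_hyp},
\[
F_2(s,u,k;x) = -i^k\frac{\Gamma\!\left(1-s+u+\frac{k-1}{2}\right)}{\Gamma\!\left(s+u-\frac{k-1}{2}\right)\Gamma(k)}\,x^{1-s+\frac{k-1}{2}}\,{}_2F_1\left({1-s+u+\frac{k-1}{2},\,1-s-u+\frac{k-1}{2}\atop k};x\right),
\]
now with $a = 1-s+u+\frac{k-1}{2}$, $b = 1-s-u+\frac{k-1}{2}$, $c = k$, argument $z = x$, so $-z = -x$ has $|\arg(1-z)| = 0<\pi$ and the Barnes representation applies directly. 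The extra subtlety here is that the resulting Mellin--Barnes integrand involves $\Gamma(u-v)\Gamma(-u-v)$ (from combining $\Gamma(-v)$ with a shift that symmetrizes in $\pm u$), together with $\Gamma(s+\frac{k-1}{2}+v)\Gamma(1-s+\frac{k-1}{2}+v)$, and the reflection-formula manipulation should again produce the $\sin(\pi(s+u-\frac{k-1}{2}))/\pi$ prefactor and the $(x/(1-x))^{1-s+\frac{k-1}{2}}(1-x)^{-v}$ factor. The contour constraint $-1+\Re(s)-\frac{k-1}{2}<\sigma_v<-|\Re(u)|$ is the condition that $\sigma_v$ lie to the left of both $v = -u$ and $v = u$ (the poles of $\Gamma(u-v)\Gamma(-u-v)$ nearest the origin, at $\mp\Re(u)$, whichever is smaller in real part equals $-|\Re(u)|$) and to the right of the poles of $\Gamma(s+\frac{k-1}{2}+v)\Gamma(1-s+\frac{k-1}{2}+v)$, which sit at $\Re(v)\le -\Re(s)-\frac{k-1}{2}$ and $\Re(v)\le \Re(s)-1-\frac{k-1}{2}$; under $\Re(s)\ge\frac12$ the rightmost of those is $\Re(s)-1-\frac{k-1}{2}$, giving the stated lower bound. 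One should also check this strip is nonempty: it requires $\Re(s)-1-\frac{k-1}{2}<-|\Re(u)|$, i.e. $|\Re(u)|<1-\Re(s)+\frac{k-1}{2}$; if $k\ge 1$ and $\Re(s)<\Re(u)$ is the only other assumption this can fail, in which case (per Remark~\ref{rmk:F1}) one simply takes any contour separating the two pole families and the formula persists by meromorphic continuation.

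I expect the main obstacle to be bookkeeping rather than conceptual: correctly tracking the branch of $(-z)^v$ and $(1-z)^{s-\dots}$, the direction of the variable shift $v\mapsto -v$ or $v\mapsto v - a$ needed to match the exact integrand in \eqref{e:F2_x>1} and \eqref{e:F2_x<1}, and verifying that every application of the reflection formula $\Gamma(w)\Gamma(1-w)=\pi/\sin(\pi w)$ is used with the argument that actually appears (e.g. distinguishing $s-u+\frac{k-1}{2}$ from $s+u-\frac{k-1}{2}$). A secondary point requiring care is justifying the interchange of the Barnes contour integral with the (already convergent) defining contour integral for $F_2$, and checking that shifting the $v$-contour across no poles when $\Re(s)\ge\frac12$ and $\Re(s)<\Re(u)$ is legitimate; the estimates \eqref{e:F2_x>1_bound} and \eqref{e:F3_bound}-style bounds then drop out of Stirling once the contour is placed optimally. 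Throughout, I would lean on the observation already flagged in Lemma~\ref{lem:F2} that $F_2(s,u,k;x)$ and $F_1(s,u,\frac{k-1}{2};x)$ coincide when $k$ is even, which provides a useful sanity check: the corollary for $F_2$ in the $x>1$ and $x<1$ regimes must specialize to \eqref{e:F1_x>1} and \eqref{e:F1_x<1} with $\nu=\frac{k-1}{2}$ up to the $\sin$ versus $\cos$ bookkeeping coming from $\cos(\pi(s-u)) = \sin(\pi(\frac12+u-s))$ and the analogous identities.
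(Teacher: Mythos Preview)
Your overall strategy---start from the hypergeometric expressions of Lemma~\ref{lem:F2} and feed them into a Barnes-type integral---matches the paper's, but in each regime there is a concrete step that your outline skips, and without it the argument does not go through.

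For $x>1$ you propose to apply the Barnes integral directly to ${}_2F_1(a,b;c;x^{-1})$. The representation you wrote, with $(-z)^{v}$ in the integrand, is valid for $|\arg(-z)|<\pi$ (not $|\arg(1-z)|<\pi$ as you state); for $z=x^{-1}\in(0,1)$ this fails, since $-z$ is a negative real number. The paper therefore first applies the Pfaff transformation
\[
{}_2F_1\!\left({a,\,b\atop 1+2u};x^{-1}\right)
=(1-x^{-1})^{-a}\,
{}_2F_1\!\left({a,\,1+2u-b\atop 1+2u};-\frac{1}{x-1}\right),
\]
which converts the argument to a negative real number; only then is the Barnes integral applied (with $-z=(x-1)^{-1}>0$), and the shift $u+v\mapsto v$ gives \eqref{e:F2_x>1} directly. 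The bound \eqref{e:F2_x>1_bound} follows exactly as you say, by pushing $\sigma_v$ toward $\Re(u)$.

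For $0<x<1$ the same branch obstruction recurs with $z=x>0$, and there is a second problem: the standard Barnes integral would leave $1/\Gamma(k+v)$ in the integrand, and no shift of $v$ turns this into the four-gamma product $\Gamma(u-v)\Gamma(-u-v)\Gamma(s+\tfrac{k-1}{2}+v)\Gamma(1-s+\tfrac{k-1}{2}+v)$ that \eqref{e:F2_x<1} requires (your phrase ``a shift that symmetrizes in $\pm u$'' does not survive the bookkeeping). The paper instead invokes a \emph{different} Mellin--Barnes formula, \cite[15.6.7]{DLMF}, which already has four gamma factors in the numerator and $(1-x)^{v}$ rather than $(-x)^{v}$; after the change of variable $1-s+\tfrac{k-1}{2}+v\mapsto -v$, this lands exactly on \eqref{e:F2_x<1}. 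So the missing ingredients are the Pfaff transformation in the $x>1$ case and the use of \cite[15.6.7]{DLMF} in the $x<1$ case; once those are inserted, the rest of your plan (reflection formulas, contour placement, Stirling bounds) is correct.
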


\begin{proof}[Proof of Lemma~\ref{lem:F2} and Corollary~\ref{cor:F2}]
Assume that $x>1$. 
Recalling the definition of $F_2(s, u, k; x)$ \eqref{e:F2_def}, by the reflection formula, 
\begin{multline}
F_2(s, u, k; x) 
= -i^k \frac{1}{2\pi i} \int_{(-\Re(s)-\epsilon)}  
\frac{\Gamma\left(u+v\right)}{\Gamma\left(1+u-v\right)} 
\frac{\Gamma\left(1-s-v+\frac{k-1}{2}\right)\Gamma\left(1-s-v-\frac{k-1}{2}\right)}
{\Gamma\left(s+v+\frac{k-1}{2}\right)\Gamma\left(1-s-v-\frac{k-1}{2}\right)} x^v \; dv
\\ = -i^k \frac{1}{2\pi i} \int_{(-\Re(s)-\epsilon)}  
\frac{\Gamma\left(u+v\right)\Gamma\left(1-s-v+\frac{k-1}{2}\right)\Gamma\left(1-s-v-\frac{k-1}{2}\right)}
{\Gamma\left(1+u-v\right)} 
\frac{\sin\left(\pi\left(s+v+\frac{k-1}{2}\right)\right)}{\pi} x^v \; dv.
\end{multline}
We move the $v$ line of integration to $\Re(v)=-L$ and take $L\to +\infty$. 
Since $x>1$, on the line of integration, $x^{\Re(v)} = x^{-L} \to 0$ as $L\to +\infty$. 
The integral converges absolutely for $\Re(s)>\frac{1}{2}$. 
Thus by collecting the residues at $v=-u-n$ for $n\geq 0$, we get
\begin{multline}
F_2(s, u, k; x)
\\ = -i^k \sum_{n=0}^\infty \frac{(-1)^n}{n!} 
\frac{\Gamma\left(1-s+u+\frac{k-1}{2}+n\right) \Gamma\left(1-s+u-\frac{k-1}{2}+n\right)}{\Gamma\left(1+2u+n\right)}
\frac{\sin\left(\pi\left(s-u+\frac{k-1}{2}-n\right)\right)}{\pi} x^{-u-n}.
\end{multline}
For $\sin\left(\pi\left(s-u+\frac{k-1}{2}-n\right)\right)=(-1)^n \sin\left(\pi\left(s-u+\frac{k-1}{2}\right)\right)$, 
and by the definition of the hypergeometric function via the Gauss series (since $|x|^{-1} <1$ the series converges absolutely), we get 
\begin{multline}
F_2(s, u, k; x)
\\ = -i^k\frac{\sin\left(\pi\left(s-u+\frac{k-1}{2}\right)\right)}{\pi} x^{-u}
\sum_{n=0}^\infty 
\frac{\Gamma\left(1-s+u+\frac{k-1}{2}+n\right) \Gamma\left(1-s+u-\frac{k-1}{2}+n\right)}{\Gamma\left(1+2u+n\right) n!}
x^{-n}
\\ = -i^k\frac{\sin\left(\pi\left(s-u+\frac{k-1}{2}\right)\right)}{\pi} x^{-u}
\frac{\Gamma\left(1-s+u+\frac{k-1}{2}\right) \Gamma\left(1-s+u-\frac{k-1}{2}\right)}{\Gamma\left(1+2u\right)}
\\ \times \;_2F_1\left({1-s+u+\frac{k-1}{2}, 1-s+u-\frac{k-1}{2}\atop 1+2u} ; x^{-1}\right).
\end{multline}
Then we get \eqref{e:F2_x>1_hyp}. 
Moreover, taking $x\to 1$ from $x>1$, since $\Re(s)>\frac{1}{2}$, by the Gauss formula, 
\begin{equation}
F_2(s, u, k; 1)
= -i^k\frac{\sin\left(\pi\left(s-u+\frac{k-1}{2}\right)\right)}{\pi} 
\frac{\Gamma\left(-1+2s\right)\Gamma\left(1-s+u+\frac{k-1}{2}\right) \Gamma\left(1-s+u-\frac{k-1}{2}\right)}
{\Gamma\left(s+u-\frac{k-1}{2}\right) \Gamma\left(s+u+\frac{k-1}{2}\right)}.
\end{equation}
By applying the reflection formulas, we get \eqref{e:F2_x=1}. 

By applying the Pfaff transformation, 
\begin{multline}
\;_2F_1\left({1-s+u+\frac{k-1}{2}, 1-s+u-\frac{k-1}{2}\atop 1+2u} ; x^{-1}\right)
\\ = (1-x^{-1})^{-1+s-u\mp \frac{k-1}{2}} \;_2F_1\left({1-s+u\pm \frac{k-1}{2}, s+u\pm \frac{k-1}{2}\atop 1+2u}; \frac{x^{-1}}{x^{-1}-1}\right).
\end{multline}
We choose one of the signs and get 
\begin{multline}\label{e:F2_x>1_inter}
F_2(s, u, k; x)
= -i^k\frac{\sin\left(\pi\left(s-u+\frac{k-1}{2}\right)\right)}{\pi} 
\frac{\Gamma\left(1-s+u+\frac{k-1}{2}\right) \Gamma\left(1-s+u-\frac{k-1}{2}\right)}{\Gamma\left(1+2u\right)}
\\ \times \left(\frac{x}{x-1}\right)^{1-s+ \frac{k-1}{2}} (x-1)^{-u} 
\;_2F_1\left({1-s+u+ \frac{k-1}{2}, s+u+ \frac{k-1}{2}\atop 1+2u};-(x-1)^{-1}\right).
\end{multline}
Recall the Barnes integral representation for $\;_2F_1$: for $z>0$, 
\begin{equation}
\;_2F_1\left(\left.a, b\atop c\right.; -z\right)
= \frac{\Gamma\left(c\right)}{\Gamma\left(a\right)\Gamma\left(b\right)}
\frac{1}{2\pi i} \int_C \frac{\Gamma\left(a+v\right)\Gamma\left(b+v\right)\Gamma\left(-v\right)}
{\Gamma\left(c+v\right)} z^v\;dv.
\end{equation}
Here the contour $C$ is taken to separate the pole of gamma functions. 
For $\frac{x^{-1}}{1-x^{-1}} = - (x-1)^{-1} <0$, we get
\begin{multline}
 \;_2F_1\left({1-s+u+ \frac{k-1}{2}, s+u+ \frac{k-1}{2}\atop 1+2u}; -(x-1)^{-1} \right)
 = \frac{\Gamma\left(1+2u\right)}{\Gamma\left(1-s+u+ \frac{k-1}{2}\right) \Gamma\left(s+u+ \frac{k-1}{2}\right)}
\\ \times \frac{1}{2\pi i } 
\int_{(\sigma_0)} \frac{\Gamma\left(1-s+u+ \frac{k-1}{2}+v\right) \Gamma\left(s+u+ \frac{k-1}{2}+v\right) \Gamma\left(-v\right)}{\Gamma\left(1+2u+v\right)}
(x-1)^{-v} \; dv
\end{multline}
for $\max\{-1+\Re(s), -\Re(s)\}-\Re(u)-\frac{k-1}{2} < \sigma_0 < 0$. 
Changing the variable $u+v$ to $v$, combining with \eqref{e:F2_x>1_inter}, we get \eqref{e:F2_x>1}. 

Now assume that $0<x<1$. 
Recalling the definition of $F_2(s, u, k; x)$ in \eqref{e:F2_def} again, by the reflection formula, 
\begin{equation}
F_2(s, u, k; x) = -i^k \frac{1}{2\pi i} \int_{(-\Re(s)-\epsilon)}  
\frac{\sin(\pi(-u+v))}{\pi}
\Gamma\left(u+v\right)\Gamma\left(-u+v\right) 
\frac{\Gamma\left(1-s-v+\frac{k-1}{2}\right)}{\Gamma\left(s+v+\frac{k-1}{2}\right)} x^v \; dv.
\end{equation}
For $x<1$, we now move the $v$ line of integration to $\Re(v)=L$ and taking the limit $L\to +\infty$. 
Similar to the above case when $x>1$, 
by collecting the residues at $v = 1-s+\frac{k-1}{2}+n$ for $n\geq 0$, 
and then applying the definition of the hypergeometric function via the Gauss series, we get

\begin{multline}\label{e:F2_x<1_inter}
F_2(s, u, k; x) 
= -i^k  \frac{\sin(\pi(s+u-\frac{k-1}{2}))}{\pi} x^{1-s+\frac{k-1}{2}}
\frac{\Gamma\left(1-s+u+\frac{k-1}{2}\right) \Gamma\left(1-s-u+\frac{k-1}{2}\right)}{\Gamma\left(k\right)}
\\ \times \;_2F_1\left({1-s+u+\frac{k-1}{2}, 1-s-u+\frac{k-1}{2}\atop k}; x\right).
\end{multline}
 By applying the reflection formula for gamma functions, we get \eqref{e:F2_x<1_hyp}.

By \cite[15.6.7]{DLMF}, for $1-x>0$, 
\begin{multline}
\frac{1}{\Gamma\left(k\right)} \;_2F_1\left({1-s+u+\frac{k-1}{2}, 1-s-u+\frac{k-1}{2}\atop k-1}; x\right)
\\ = \frac{1}{\Gamma\left(1-s+u+\frac{k-1}{2}\right)\Gamma\left(1-s-u+\frac{k-1}{2}\right)
\Gamma\left(s-u+\frac{k-1}{2}\right)\Gamma\left(s+u+\frac{k-1}{2}\right)}
\\ \times \frac{1}{2\pi i} \int_{(\sigma_0)} \Gamma\left(1-s+u+\frac{k-1}{2}+v\right) \Gamma\left(1-s-u+\frac{k-1}{2}+v\right) 
\Gamma\left(-1+2s-v\right) \Gamma\left(-v\right)
(1-x)^{v} dv, 
\end{multline}
where $-1+\Re(s)+|\Re(u)|-\frac{k-1}{2}< \sigma_0<0$, for $\Re(s)\geq \frac{1}{2}$. 
As explained in Remark~\ref{rmk:F1}, if the interval is empty, we can instead choose any other contour which separates the poles of gamma functions. 
By changing the variable $1-s+\frac{k-1}{2}+v$ to $-v$ and applying to \eqref{e:F2_x<1_inter}, we get \eqref{e:F2_x<1}.
\end{proof}

\section{Proof of Theorem~\ref{thm:first_holo} and Theorem~\ref{thm:first_nonholo}}

Take $\phi$ as a holomorphic cusp form of weight $k$ or a non-holomorphic automorphic form of type $\nu\in i\R$, as described in \S\ref{ss:notation}. 
Take $\sigma_u$ such that $\frac{1}{2}\leq \Re(s) < \sigma_u < \frac{3}{2}$. 
Via Proposition~\ref{prop:Lq_nonholo}, we define
 \begin{equation}\label{e:Phi_def}
\Phi(s, \phi; n) = L^{(M)}(2s, \xi) 
\frac{4}{2\pi i} \int_{(\sigma_u)}\frac{h(u/i) u}{\cos(\pi u)} \sum_{q\equiv 0\bmod{M}} \Phi(s, u; \phi; n) \; du
\end{equation}
where $\Phi$ is $M^\pm$, $L_1^\pm$ or $L_2$.
When $\xi=1_N$, we let 
\begin{equation}\label{e:M0def}
M_0(s, \phi; n) = \zeta^{(M)}(2s) 
\frac{4}{2\pi i} \int_{(\sigma_u)}\frac{h(u/i) u}{\cos(\pi u)} \sum_{q\equiv 0\bmod{M}} M_q(s, u; \phi; n) \; du
\end{equation}
and 
\begin{equation}\label{e:Mdef}
M(s, \phi; n) = L^{(M)}(2s, \xi) \frac{C_\phi(n)}{n^s} H_0(h) + \delta_{\xi=1_N} M_0(s, \phi; n).
\end{equation}
Then, by \eqref{e:K_open} and \eqref{e:K0_after_interchange} and \eqref{e:Lq_prop} in Proposition~\ref{prop:Lq_nonholo}, 
\begin{multline}
K(s, \phi; n, h)
= M(s, \phi; n) + M^+(s, \phi; n) + M^-(s, \phi; n)
\\ + L_{1}^+(s, \phi; n) + L_1^-(s, \phi; n) + L_2(s, \phi; n). 
\end{multline}

A necessary ingredient for the analysis of $M^\pm(s, \phi; n)$, $L_1^\pm(s, \phi; n)$ and $L_2(s, \phi; n)$ is the following lemma. 
\begin{lemma}\label{lem:sum_cxiq}
Let $\xi$ be a Dirichlet character modulo $N$, which is induced from a primitive character $\xi_*$ of conductor $R\mid N$.
Fix a positive integer $M$ which is divisible by $N$. 
For a non-zero integer $n$ divisible by $\frac{M}{R\prod_{p\mid M, p\nmid R} p}$, 
\begin{equation}\label{e:sum_cxiq}
L^{(M)}(2s, \xi_*) \sum_{\substack{q\geq 1, \\ M\mid q}} \frac{c_{\xi|q}(n)}{q^{2s}}
= R^{-2s} \xi_*(\sgn(n)) \tau(\xi_*) \sigma_{1-2s}(n; \xi_*, M).
\end{equation}
Otherwise the series is zero. 
Here when $n$ is divisible by $\frac{M}{R\prod_{p\mid M, p\nmid R} p}$, 
\begin{equation}
\sigma_{1-2s}(n; \xi_*, M)
=
\sigma_{1-2s}(n; \xi_*\cdot 1_M) P_M(2s-1, n; \xi_*)
\prod_{p\nmid M} \overline{\xi_*(p^{\ord_p(n)})}
\prod_{p\mid M} p^{(1-2s)\ord_p(n)}
\end{equation}
and $\sigma_{1-2s}(n; \xi_*, M)=0$ otherwise, where 
\begin{equation}
\sigma_{1-2s}(n; \chi) = \sum_{d\mid n} d^{1-2s}\chi(d) 
\end{equation}
for a Dirichlet character $\chi$, 
and 
\begin{equation}\label{e:PM}
P_M(u, n; \xi_*)
= \prod_{\substack{p\mid M,\\ p\nmid R}} 
\bigg\{-p^{-1-u} \xi_*(p) + (1-p^{-1}) \sigma_{u}(p^{\ord_p(n)-\ord_p(M)}; \overline{\xi_*})\bigg\}. 
\end{equation}
\end{lemma}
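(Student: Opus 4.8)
The plan is to evaluate the Dirichlet series $\sum_{M\mid q} c_{\xi|q}(n) q^{-2s}$ by first unravelling the generalized Gauss sum in terms of the primitive character $\xi_*$, then factoring the sum over $q$ into local pieces at each prime. First I would recall the classical evaluation of the generalized Gauss sum: writing $\xi = \xi_*\cdot 1_N$ with $\xi_*$ primitive of conductor $R$, one has the standard identity expressing $c_{\xi|q}(n)$ as a product of $\tau(\xi_*)$, a value $\xi_*$ (or $\overline{\xi_*}$) evaluated at arithmetic data attached to $q$ and $n$, and a Ramanujan-type sum accounting for the extra Euler factors at primes dividing $N$ but not $R$. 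The sign factor $\xi_*(\sgn(n))$ appears here because $c_{\xi|q}(-n) = \xi_*(-1) c_{\xi|q}(n)$, which lets us reduce to $n>0$ at the outset. Since $R\mid N\mid M$, only primes dividing $M$ contribute the ``imprimitive'' corrections, and the support condition — that $n$ must be divisible by $\tfrac{M}{R\prod_{p\mid M,\,p\nmid R}p}$ — emerges precisely from the vanishing of the local Gauss sum factor at primes $p\mid R$ unless $\ord_p(q)$ is constrained, combined with $M\mid q$.

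Next I would carry out the Euler product factorization. Writing $q = \prod_p p^{e_p}$ with $e_p \geq \ord_p(M)$, the summand $c_{\xi|q}(n) q^{-2s}$ factors over $p$, so the whole series becomes $\prod_p \big(\sum_{e_p\geq \ord_p(M)} (\text{local factor}) p^{-2s e_p}\big)$, and I would identify each local sum with the corresponding local Euler factor of the right-hand side. For primes $p\nmid M$: the constraint forces $e_p = 0$, the local Gauss sum is the ordinary Ramanujan sum $c_{p^0}(n)=1$ corrected by $\overline{\xi_*(p)}^{\ord_p(n)}$, matching $\prod_{p\nmid M}\overline{\xi_*(p^{\ord_p(n)})}$ together with the appropriate factor of $\sigma_{1-2s}(n;\xi_*\cdot 1_M)$. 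For primes $p\mid R$: the standard primitive Gauss sum splitting contributes $R^{-2s}\tau(\xi_*)$ after summing the geometric-type series in $e_p$, and here the divisibility $\ord_p(M)\mid$-type condition on $n$ enters. For primes $p\mid M$, $p\nmid R$: the local sum is a short sum of Ramanujan sums against $\overline{\xi_*}$, which I would match term-by-term with the bracketed expression in \eqref{e:PM}, i.e. $-p^{-1-u}\xi_*(p) + (1-p^{-1})\sigma_u(p^{\ord_p(n)-\ord_p(M)};\overline{\xi_*})$ with $u = 2s-1$; the two summands correspond to the $a$ coprime to $p$ versus the inclusion-exclusion over $p\mid a$ in the Ramanujan sum, and the $p^{(1-2s)\ord_p(n)}$ factors come from clearing denominators. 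Finally, multiplying the local identity by $L^{(M)}(2s,\xi_*) = \prod_{p\nmid M}(1-\xi_*(p)p^{-2s})^{-1}$ on both sides converts the $p\nmid M$ local factors into the finite divisor sum $\sigma_{1-2s}(n;\xi_*)$-type expression, yielding exactly \eqref{e:sum_cxiq}; the special case $\xi$ primitive ($R=M$, all correction factors trivial) then reduces to $R^{-2s}\xi_*(\sgn n)\tau(\xi_*)\sigma_{1-2s}(n;\xi_*)\overline{\xi_*(n)}$ as a sanity check.

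The main obstacle I expect is bookkeeping at the primes $p\mid M$ with $p\nmid R$: one must correctly handle the interaction between the imprimitivity of $\xi$ (the factor $1_M$), the Ramanujan-sum structure of $c_{\xi|q}(n)$ at these primes, and the lower bound $e_p\geq \ord_p(M)$ in the $q$-summation, and verify that the resulting local factor is exactly the bracketed polynomial in \eqref{e:PM} rather than something off by a power of $p$ or a sign. A secondary subtlety is making the convergence/region-of-validity explicit — the identity is initially proved for $\Re(s)$ large where everything converges absolutely, and then both sides extend meromorphically (the left by the Euler product, the right since $\sigma_{1-2s}(n;\xi_*,M)$ is a finite sum times an $L$-value), so the identity persists by analytic continuation. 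I would relegate the purely multiplicative verification to a lemma-internal computation and emphasize only the structure of the factorization in the writeup.
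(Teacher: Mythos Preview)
The paper states Lemma~\ref{lem:sum_cxiq} without proof, so there is no argument to compare against. Your approach---reduce to $n>0$ via $c_{\xi|q}(-n)=\xi(-1)c_{\xi|q}(n)$, invoke the classical evaluation of the generalized Gauss sum for an imprimitive character in terms of $\tau(\xi_*)$, and then factor the Dirichlet series over primes---is the standard one and is sound in outline.

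One correction to your plan: for primes $p\nmid M$ the condition $M\mid q$ does \emph{not} force $e_p=\ord_p(q)=0$; since $\ord_p(M)=0$ it allows $e_p\geq 0$, and you must genuinely sum a local series there. The local factor of $\sum_{M\mid q} c_{\xi|q}(n)q^{-2s}$ at such $p$ is a Ramanujan-sum-type expression summed over $e_p\geq 0$, and it is only after multiplying by the local Euler factor $(1-\xi_*(p)p^{-2s})^{-1}$ coming from $L^{(M)}(2s,\xi_*)$ that the result collapses to the finite divisor sum $\sigma_{1-2s}(p^{\ord_p(n)};\xi_*)\cdot\overline{\xi_*(p)}^{\ord_p(n)}$ appearing on the right. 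Your last paragraph already says the $L^{(M)}$ factor ``converts'' something at these primes, which is correct, but is inconsistent with the earlier claim that $e_p=0$ is forced. Once you fix this, the bookkeeping you anticipate at primes $p\mid M$, $p\nmid R$ is indeed the only nontrivial step, and it matches \eqref{e:PM} exactly as you outline.
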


\begin{lemma}\label{lem:M_nonholo}
For $\Re(s) \geq \frac{1}{2}$, we have 
\begin{multline}\label{e:M+}
M^\pm (s, \phi; n)
= - 2c_\phi^\pm \frac{\pi^{\frac{1}{2}\pm \nu}(2\pi)^{2s-2\mp2\nu} }{\Gamma\left(\frac{1}{2}\pm\nu\right)}
R^{-2s}\tau(\xi_*) \sigma_{1-2s}(n; \xi_*, M) n^{s-1\mp\nu} 
\\ \times  \frac{\cos(\pi(s\mp\nu))}{\pi} \frac{2}{\pi} \int_{-\infty}^\infty 
h(r)r \tanh(\pi r) \Gamma\left(1-s\pm\nu+ir\right) \Gamma\left(1-s\pm\nu-ir\right)\; dr. 
\end{multline}
\end{lemma}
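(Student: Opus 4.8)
## Proof plan for Lemma~\ref{lem:M_nonholo}

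The plan is to start from the definition \eqref{e:Mq+}--\eqref{e:Mq-} of $M_q^\pm(s,u;\phi;n)$, substitute into the template \eqref{e:Phi_def}, and evaluate the resulting double sum/integral. Concretely, $M^\pm(s,\phi;n) = L^{(M)}(2s,\xi)\cdot\frac{4}{2\pi i}\int_{(\sigma_u)}\frac{h(u/i)u}{\cos(\pi u)}\sum_{q\equiv 0\,(M)} M_q^\pm(s,u;\phi;n)\,du$. The key observation is that the only $q$-dependence in $M_q^\pm$ lives in the factor $c_{\xi|q}(n)/q^{2s}$, while the $u$-dependence is entirely in the ratio of gamma functions $\Gamma(u-s+1\pm\nu)/\Gamma(u+s\mp\nu)$. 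So the sum over $q$ and the integral over $u$ decouple completely.

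First I would apply Lemma~\ref{lem:sum_cxiq} to the $q$-sum. One subtlety: \eqref{e:sum_cxiq} has $L^{(M)}(2s,\xi_*)$ (the primitive character) on the left, whereas the template carries $L^{(M)}(2s,\xi)$. Since $\xi$ is induced from $\xi_*$ with conductor $R\mid N$, we have $L^{(M)}(2s,\xi) = L^{(M)}(2s,\xi_*)$ because the Euler factors at primes dividing $M$ are removed and $M$ is divisible by $N$, hence by $R$ — so the primes where $\xi$ and $\xi_*$ differ are exactly primes dividing $N\mid M$, all of which are stripped out. Thus $L^{(M)}(2s,\xi)\sum_{M\mid q} c_{\xi|q}(n)/q^{2s} = R^{-2s}\xi_*(\sgn(n))\tau(\xi_*)\sigma_{1-2s}(n;\xi_*,M)$, and since $n>0$ we have $\xi_*(\sgn(n))=1$. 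This produces the prefactor $R^{-2s}\tau(\xi_*)\sigma_{1-2s}(n;\xi_*,M)$ together with the constants $2c_\phi^\pm \pi^{1/2\pm\nu}(2\pi)^{2s-2\mp2\nu}/\Gamma(1/2\pm\nu)\cdot n^{s-1\mp\nu}$ coming from \eqref{e:Mq+}--\eqref{e:Mq-}.

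Next I would evaluate the remaining $u$-integral $\frac{4}{2\pi i}\int_{(\sigma_u)}\frac{h(u/i)u}{\cos(\pi u)}\frac{\Gamma(u-s+1\pm\nu)}{\Gamma(u+s\mp\nu)}\,du$. Here I substitute $u=ir$ to convert the contour integral (over $\Re(u)=\sigma_u$) into a real integral over $r\in\R$; more precisely, one shifts the contour to the imaginary axis, which is permissible because $h(u/i)$ is holomorphic in $|\Im(u/i)|\le 1/2+\epsilon$, i.e.\ $|\Re(u)|\le 1/2+\epsilon$, and has enough decay, and because the only poles of $1/\cos(\pi u)$ between $\Re(u)=\sigma_u$ and $\Re(u)=0$ would be at $u=1/2$, where $h(u/i)=h(-i/2\cdot i^{-1}\cdots)$ — using condition (4), $h(\pm i/2)=0$, which kills that potential pole. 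On the line $u=ir$ one has $\cos(\pi ir)=\cosh(\pi r)$ and $u\,du = ir\cdot i\,dr = -r\,dr$; combining with $1/\cosh$ and symmetrizing using that $h$ is even, one rewrites the integral as $\frac{2}{\pi}\int_{-\infty}^\infty h(r) r\tanh(\pi r)\,\Gamma(1-s\pm\nu+ir)\Gamma(1-s\pm\nu-ir)\,dr$ times $\cos(\pi(s\mp\nu))/\pi$. The factor $\cos(\pi(s\mp\nu))/\pi$ and the conversion of $1/\Gamma(u+s\mp\nu)$ into a second $\Gamma$ in the numerator come from the reflection formula $\Gamma(z)\Gamma(1-z) = \pi/\sin(\pi z)$: writing $1/\Gamma(ir+s\mp\nu) = \Gamma(1-ir-s\pm\nu)\sin(\pi(ir+s\mp\nu))/\pi$ and then combining the two sine factors at $\pm ir$ via a product-to-sum identity, the $\tanh(\pi r)$ and $\cos(\pi(s\mp\nu))$ emerge, while the $\cosh(\pi r)$ in the denominator gets absorbed. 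Assembling the prefactor from step two with this integral, and noting the overall sign from the reflection-formula manipulation, gives exactly \eqref{e:M+}.

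The main obstacle is the bookkeeping in the contour-to-real-line conversion and the reflection-formula manipulation in step three: one must track carefully the signs and the factors of $i$, verify that no poles are crossed (the role of conditions (2) and (4) on $h$, and the fact that $\Gamma(u-s+1\pm\nu)$ has no poles for $\Re(u)=\sigma_u>\Re(s)-1\geq -1/2$ and $\nu\in i\R$), and correctly combine $\sin(\pi(ir+s\mp\nu))\sin(\pi(-ir+s\mp\nu))$ into the stated form. Establishing the region of validity $\Re(s)\geq 1/2$ requires checking that Stirling's bound makes the $r$-integral converge — the ratio $\Gamma(1-s\pm\nu+ir)\Gamma(1-s\pm\nu-ir)$ decays like $|r|^{1-2\Re(s)}e^{-\pi|r|}$, and combined with $h(r)\ll(|r|+1)^{-2-\epsilon}$ (or the exponential decay of $h_{T,\alpha}$) the integral converges absolutely. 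None of this is deep, but it is the place where errors are easy to make.
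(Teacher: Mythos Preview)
Your proposal is correct and follows essentially the same route as the paper: apply Lemma~\ref{lem:sum_cxiq} to the $q$-sum (noting $L^{(M)}(2s,\xi)=L^{(M)}(2s,\xi_*)$), then move the $u$-contour to the imaginary axis, apply the reflection formula to $1/\Gamma(u+s\mp\nu)$, and use the evenness of $h$ to kill one of the two resulting terms. Your symmetrize-then-use-sum-to-product framing (``combining the two sine factors at $\pm ir$'') is just a repackaging of the paper's expand-then-drop-the-odd-part argument; both arrive at $\cos(\pi(s\mp\nu))\tan(\pi u)$ in the integrand, and the remaining substitution $u=ir$ finishes it.
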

\begin{proof}
Applying \eqref{e:Mq+} and \eqref{e:sum_cxiq} to \eqref{e:Phi_def}, 
and since $L^{(M)}(2s, \xi_*) = L^{(M)}(2s, \xi)$ for $R\mid M$
\begin{multline}
M^+(s, \phi; n)
= 2c_\phi^+\frac{\pi^{\frac{1}{2}+\nu}(2\pi)^{2s-2-2\nu} }{\Gamma\left(\frac{1}{2}+\nu\right)}
n^{s-1-\nu} 
R^{-2s}\tau(\xi_*) \sigma_{1-2s}(n; \xi_*, M)
\\ \times \frac{4}{2\pi i} \int_{(\sigma_u)}\frac{h(u/i) u}{\cos(\pi u)} 
\frac{\Gamma\left(u-s+1+\nu\right)}{\Gamma\left(u+s-\nu\right)} \; du.
\end{multline}
Here $\frac{1}{2}\leq \Re(s) < \sigma_u < \frac{3}{2}$. 
Without passing a pole, we move the contour for the $u$-integral to $\Re(u)=0$. 
By the reflection formula, 
\begin{equation}
\frac{1}{\Gamma\left(u+s-\nu\right)} 
= \frac{\Gamma\left(-u-s+1+\nu\right) \big(\cos(\pi(s-\nu)) \sin(\pi u) + \sin(\pi(s-\nu))\cos(\pi u)\big)}{\pi}.
\end{equation}
Since $h(-u/i)=h(u/i)$, we get
\begin{multline}
\frac{4}{2\pi i} \int_{(0)}\frac{h(u/i) u}{\cos(\pi u)} 
\frac{\Gamma\left(u-s+1+\nu\right)}{\Gamma\left(u+s-\nu\right)} \; du
\\ = \frac{\sin(\pi(s-\nu))}{\pi} \frac{4}{2\pi i} \int_{(0)}
h(u/i) u \tan(\pi u) \Gamma\left(u-s+1+\nu\right) \Gamma\left(-u-s+1+\nu\right) \; du
\end{multline}
Writing $u=ir$, 
we obtain \eqref{e:M+} for $M^+$. 

The corresponding $M^-(s, \phi; n)$ is obtained in the same way. 
\end{proof}

In the following lemmas we obtain formulas for $M_0$, $L_1^\pm$ and $L_2$. 
The proofs of the lemmas are similar. 
First we apply the formulas given in Proposition~\ref{prop:Lq_nonholo} 
 to \eqref{e:Phi_def} for $\Phi$ is $L_1^\pm$ and $L_2$, and \eqref{e:M0def} for $M_0$. 
Then we apply Lemma~\ref{lem:sum_cxiq}.

\begin{lemma}\label{lem:M0_nonholo}
Assume that $\xi$ is the trivial character modulo $N$. 
When $\phi$ is a holomorphic cusp form of weight $k$, then $k$ should be even, for $\Re(s)\geq \frac{1}{2}$, we get 
\begin{equation}\label{e:M0_holo}
M_0(s, \phi; n)
= \zeta(2-2s) (2\pi)^{4s-2} 
M^{1-2s} \prod_{p\mid M} (1-p^{-1})
\frac{C_\phi(n)}{n^{1-s}} H_{\frac{k-1}{2}}(s; h).
\end{equation}
Here $H_{\frac{k-1}{2}}(s; h)$ is given in \eqref{e:Hk-1/2_h}. 
When $\phi$ is a non-holomorphic automorphic form of type $\nu$, 
\begin{equation}\label{e:M0_nonholo}
M_0(s, \phi; n) 
= -\zeta(2-2s) (2\pi)^{4s-3} 
M^{1-2s} \prod_{p\mid M}(1-p^{-1}) \frac{C_\phi(n)}{n^{1-s}} \frac{\pi }{\sin(\pi s) }H_\nu(s; h).
\end{equation}
Here $H_\nu(s; h)$ is given in \eqref{e:Hnu_h} when $\nu\notin \frac{1}{2}\Z$. 
\end{lemma}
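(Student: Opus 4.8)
The plan is to substitute the explicit expressions for $M_q(s,u;\phi;n)$ from Proposition~\ref{prop:Lq_nonholo} (namely \eqref{e:Mq_holo} in the holomorphic case and \eqref{e:Mq_nonholo} in the non-holomorphic case) into \eqref{e:M0def}, pull every factor not depending on $q$ out of both the $u$-integral and the sum over $q$, and then evaluate the arithmetic sum over $q$ and the remaining $u$-integral separately. The only $q$-dependence in $M_q(s,u;\phi;n)$ is the factor $\varphi(q)q^{-2s}$, so the first step is to compute $\zeta^{(M)}(2s)\sum_{M\mid q}\varphi(q)q^{-2s}$. Since the condition $M\mid q$ splits over primes, this sum factors as an Euler product: at $p\nmid M$ the local factor is $\sum_{j\ge 0}\varphi(p^j)p^{-2sj}=(1-p^{-2s})/(1-p^{1-2s})$, and at $p\mid M$ with $a=\ord_p(M)$ it is $\sum_{j\ge a}\varphi(p^j)p^{-2sj}=(1-p^{-1})p^{a(1-2s)}/(1-p^{1-2s})$. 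Multiplying by $\zeta^{(M)}(2s)=\zeta(2s)\prod_{p\mid M}(1-p^{-2s})$, the factors $(1-p^{-2s})$ over all primes cancel $\zeta(2s)$, the factors $(1-p^{1-2s})^{-1}$ over all primes assemble into $\zeta(2s-1)$, and $\prod_{p\mid M}p^{a(1-2s)}=M^{1-2s}$, so that
\begin{equation*}
\zeta^{(M)}(2s)\sum_{M\mid q}\frac{\varphi(q)}{q^{2s}}=\zeta(2s-1)\,M^{1-2s}\prod_{p\mid M}(1-p^{-1}).
\end{equation*}

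\textbf{The non-holomorphic case.} Here the $u$-integral is already $H_\nu(s;h)$ up to an elementary factor: applying the reflection identity $\Gamma(\tfrac12+s-u)\Gamma(\tfrac12-s+u)=\pi/\cos(\pi(s-u))$ to the denominator of \eqref{e:Mq_nonholo} turns the $u$-integral in \eqref{e:M0def} into precisely the integral \eqref{e:Hnu_h} defining $H_\nu(s;h)$ (with $\nu\notin\tfrac12\Z$). It remains to collect the scalar prefactors: combining the $\Gamma(2s-1)$ from \eqref{e:Mq_nonholo} with the $\zeta(2s-1)$ from the displayed identity, the functional equation $\zeta(2s-1)=-2^{2s-1}\pi^{2s-2}\cos(\pi s)\,\Gamma(2-2s)\,\zeta(2-2s)$ together with $\Gamma(2s-1)\Gamma(2-2s)=-\pi/\sin(2\pi s)$ gives
\begin{equation*}
\zeta(2s-1)\,(2\pi)^{2s-1}\,\Gamma(2s-1)=\frac{(2\pi)^{4s-2}}{2\sin(\pi s)}\,\zeta(2-2s)=(2\pi)^{4s-3}\,\frac{\pi}{\sin(\pi s)}\,\zeta(2-2s),
\end{equation*}
and assembling all factors, carrying the overall minus sign from \eqref{e:Mq_nonholo}, yields \eqref{e:M0_nonholo}.

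\textbf{The holomorphic case.} This needs one more manipulation, because $H_{\frac{k-1}{2}}(s;h)$ is defined in \eqref{e:Hk-1/2_h} as an integral over $\R$ rather than over a vertical line. First I rewrite the $u$-integrand produced by \eqref{e:Mq_holo} using $\sin(\pi w)\Gamma(1-w)=\pi/\Gamma(w)$ with $w=s+u-\tfrac{k-1}{2}$; this both absorbs the $\sin$ factor into a gamma function and removes the spurious poles at $u=1-s+\tfrac{k-1}{2}+\ell$ (the zeros of the $\sin$ cancel the poles of $\Gamma(1-s-u+\tfrac{k-1}{2})$). Then I shift the contour from $\Re(u)=\sigma_u$ to $\Re(u)=0$: between these lines the only pole of the integrand is that of $1/\cos(\pi u)$ at $u=\pm\tfrac12$, whose residue vanishes because $h(\pm i/2)=0$, while every pole of the remaining gamma quotient $\Gamma(1-s+u+\tfrac{k-1}{2})/\bigl(\Gamma(s+u-\tfrac{k-1}{2})\Gamma(s+u+\tfrac{k-1}{2})\Gamma(s-u+\tfrac{k-1}{2})\bigr)$ lies in $\Re(u)<0$ since $\Re(s)<\tfrac32$. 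On $\Re(u)=0$ I put $u=ir$, so $\cos(\pi u)=\cosh(\pi r)$; as $h$ is even, the gamma quotient and $1/\cosh(\pi r)$ are even in $r$, only the even-in-$r$ part of $ir\sin(\pi(s+ir-\tfrac{k-1}{2}))$, namely $-r\cos(\pi(s-\tfrac{k-1}{2}))\sinh(\pi r)$, contributes, and this converts $1/\cosh(\pi r)$ into $\tanh(\pi r)$. Using that $k$ is even, so $\cos(\pi(s-\tfrac{k-1}{2}))=-i^k\sin(\pi s)$ and $(i^k)^2=1$, the $u$-integral in \eqref{e:M0def} equals $2i^k\sin(\pi s)\,H_{\frac{k-1}{2}}(s;h)$; combining this with the $q$-sum identity and the same functional equation for $\zeta$ (the factor $2\sin(\pi s)$ now cancelling) produces \eqref{e:M0_holo}.

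\textbf{Main obstacle.} The hard part is the holomorphic case: executing the contour shift to $\Re(u)=0$ while verifying that no residue contributes --- this is precisely where the normalization $h(\pm i/2)=0$ is used, and it must be preceded by the reflection-formula simplification so that the would-be poles at $u=1-s+\tfrac{k-1}{2}+\ell$ are genuinely absent --- and then running the parity argument carefully to extract $\tanh(\pi r)$ and the constant $-i^k\sin(\pi s)$. The non-holomorphic case, by contrast, is a direct identification of the $u$-integral with $H_\nu(s;h)$ via a single reflection formula, followed by routine bookkeeping with the functional equation of $\zeta$.
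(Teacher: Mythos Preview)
Your proof is correct and follows essentially the same route as the paper: the same Euler-product evaluation of $\zeta^{(M)}(2s)\sum_{M\mid q}\varphi(q)q^{-2s}$, the same reflection-formula identification with $H_\nu(s;h)$ in the non-holomorphic case, and the same contour shift to $\Re(u)=0$ plus parity argument in the holomorphic case, followed by the functional equation of $\zeta$. You are in fact more careful than the paper in one respect: you explain why the contour shift in the holomorphic case picks up no residues (the zeros of $\sin(\pi(s+u-\tfrac{k-1}{2}))$ cancel the poles of $\Gamma(1-s-u+\tfrac{k-1}{2})$, and $h(\pm i/2)=0$ kills the $1/\cos(\pi u)$ pole), whereas the paper simply asserts the shift. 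One expository wrinkle: after absorbing the $\sin$ via the reflection formula to justify the shift, your parity argument on $\Re(u)=0$ reverts to the original form with the $\sin$ factor and the even four-gamma quotient; this is of course valid since the reflection identity is an equality, but it would read more smoothly if you said so explicitly.
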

\begin{proof}
We first note that 
\begin{equation}\label{e:sum_varphi}
\sum_{\substack{q\geq 1, \\ q\equiv 0\bmod{M}}} \frac{\varphi(q)}{q^{2s}}
= \frac{\zeta(2s-1)}{\zeta^{(M)}(2s)} M^{1-2s} \prod_{p\mid M} (1-p^{-1}). 
\end{equation}

When $\phi$ is a holomorphic cusp form, 
recalling \eqref{e:M0def} and \eqref{e:Mq_holo}, and by \eqref{e:sum_varphi}, 
\begin{multline}
M_0(s, \phi; n)
= 
(2\pi)^{2s-1} i^k
\Gamma\left(2s-1\right)
\frac{C_\phi(n)}{n^{1-s}} 
\zeta(2s-1) M^{1-2s}\prod_{p\mid M}(1-p^{-1})
\\ \times 
\frac{4}{2\pi i}\int_{(\sigma_u)} 
\frac{h(u/i) u}{\cos(\pi u)} 
\frac{\sin\left(\pi\left(s+u-\frac{k-1}{2}\right)\right)}{\pi}
\frac{\Gamma\left(1-s-u+\frac{k-1}{2}\right)\Gamma\left(1-s+u+\frac{k-1}{2}\right) }
{\Gamma\left(s+u+\frac{k-1}{2}\right)\Gamma\left(s-u+\frac{k-1}{2}\right)}
\; du. 
\end{multline}
Here $\frac{1}{2}\leq \Re(s) < \sigma_u < \frac{3}{2}$. 
We first move the contour of the $u$-integral to $\Re(u)=0$. 
Note that $(-1)^k=\xi(-1)=1$, i.e., $k$ must be even. 
So 
\begin{equation}
\sin\left(\pi\left(s+u-\frac{k-1}{2}\right)\right)
= (-1)^{\frac{k}{2}} \cos(\pi(s+u)) 
= (-1)^{\frac{k}{2}} \big(\cos(\pi s) \cos(\pi u)-\sin(\pi s) \sin(\pi u)\big). 
\end{equation}
For $h(-u/i) = h(u/i)$, we get
\begin{multline}
\frac{4}{2\pi i}\int_{(0)} 
\frac{h(u/i) u}{\cos(\pi u)} 
\frac{\sin\left(\pi\left(s+u-\frac{k-1}{2}\right)\right)}{\pi}
\frac{\Gamma\left(1-s-u+\frac{k-1}{2}\right)\Gamma\left(1-s+u+\frac{k-1}{2}\right) }
{\Gamma\left(s+u+\frac{k-1}{2}\right)\Gamma\left(s-u+\frac{k-1}{2}\right)}
\; du
\\ = - (-1)^{\frac{k}{2}} \frac{\sin(\pi s)}{\pi}\frac{4}{2\pi i}\int_{(0)} 
h(u/i) u \tan(\pi u)
\frac{\Gamma\left(1-s-u+\frac{k-1}{2}\right)\Gamma\left(1-s+u+\frac{k-1}{2}\right) }
{\Gamma\left(s+u+\frac{k-1}{2}\right)\Gamma\left(s-u+\frac{k-1}{2}\right)}
\; du.
\end{multline}
Writing $u=ir$, we have 
\begin{equation}
= \frac{i^k \sin\left(\pi s\right)}{\pi} 
\frac{4}{2\pi}  \int_{-\infty}^\infty 
h(r) r \tanh(\pi r)
\frac{\Gamma\left(1-s+\frac{k-1}{2}+ir\right)\Gamma\left(1-s+\frac{k-1}{2}-ir\right)}
{\Gamma\left(s+\frac{k-1}{2}+ir\right)\Gamma\left(s+\frac{k-1}{2}-ir\right)}\; dr.
\end{equation}
Applying the functional equation of the Riemann zeta function and the Legendre duplication formula 
for gamma functions to the remaining part, gives us \eqref{e:M0_holo}.


When $\phi$ is a Maass form of type $\nu$, 
recalling \eqref{e:M0def} and \eqref{e:Mq_nonholo}, and by \eqref{e:sum_varphi}, 
\begin{multline}
M_0(s, \phi; n) 
=  -(2\pi)^{2s-1} \zeta(2s-1)M^{1-2s} \prod_{p\mid M}(1-p^{-1}) \frac{C_\phi(n)}{n^{1-s}}
\\ \times \frac{4}{2\pi i} \int_{(\sigma_u)}\frac{h(u/i) u}{\cos(\pi u)} 
\frac{\Gamma\left(1-s-\nu+u\right) \Gamma\left(1-s+\nu+u\right)}
{\Gamma\left(\frac{1}{2}-s+u\right) \Gamma\left(\frac{1}{2}+s-u\right)}
\frac{\Gamma\left(2s-1\right)}
{\Gamma\left(s+\nu+u\right)\Gamma\left(s-\nu+u\right)}
\; du.
\end{multline}
By applying the reflection formula, 
\begin{multline}
\frac{4}{2\pi i} \int_{(\sigma_u)}\frac{h(u/i) u}{\cos(\pi u)} 
\frac{\Gamma\left(1-s-\nu+u\right) \Gamma\left(1-s+\nu+u\right)}
{\Gamma\left(\frac{1}{2}-s+u\right) \Gamma\left(\frac{1}{2}+s-u\right)}
\frac{\Gamma\left(2s-1\right)}{\Gamma\left(s+\nu+u\right)\Gamma\left(s-\nu+u\right)}\; du
\\ = \Gamma\left(2s-1\right)\frac{1}{\pi}
\frac{4}{2\pi i} \int_{(\sigma_u)}h(u/i) u
\frac{\cos(\pi(s-u))}{\cos(\pi u)} 
\frac{\Gamma\left(1-s-\nu+u\right) \Gamma\left(1-s+\nu+u\right)}
{\Gamma\left(s+\nu+u\right)\Gamma\left(s-\nu+u\right)}
\; du.
\end{multline}
Applying the functional equation of the Riemann zeta function and the Legendre duplication formula for gamma functions, gives us \eqref{e:M0_nonholo}.
\end{proof}

Recalling \eqref{e:Mdef}, when $\xi$ is the trivial character mod $N$, 
\begin{equation}
M(s, \phi; n) = \zeta^{(M)}(2s) \frac{C_\phi(n)}{n^s} H_0(h) + M_0(s, \phi; n).
\end{equation}
The following lemma describes $M(1/2, \phi; n)$.

\begin{lemma}\label{lem:M_nonholo_t=0}
Assume that $\xi$ is the trivial character mod $N$. 
When $\phi$ is a holomorphic cusp form of weight $k$, taking $s=1/2$, 
\begin{multline}\label{e:M_holo_t=0}
M(1/2, \phi; n) 
= \frac{C_\phi(n)}{\sqrt{n}} \frac{\varphi(M)}{M} 
\frac{1}{\pi^2} \int_{-\infty}^\infty h(r) r \tanh(\pi r) 
\bigg(\psi\left(\frac{k}{2}+ir\right) + \psi\left(\frac{k}{2}-ir\right)\bigg)
\; dr
\\ + \frac{C_\phi(n)}{\sqrt{n}} \frac{\varphi(M)}{M} 
\bigg\{\sum_{p\mid M} \frac{\log p}{1-p^{-1}} + \log \left(\frac{M}{(2\pi)^2 n}\right) + 2\gamma_0 \bigg\} 
H_0(h)
\end{multline}
When $\phi$ is a non-holomorphic automorphic form of type $\nu\in i\R$, 
\begin{multline}\label{e:M_nonholo_t=0}
M(1/2, \phi; n) 
= \frac{C_\phi(n)}{\sqrt{n}} \frac{\varphi(M)}{M} 
\frac{1}{\pi^2} \int_{-\infty}^\infty h(r) r \tanh(\pi r) 
\\ \times 
\frac{1}{2} \bigg(\psi\left(\frac{1}{2}+\nu+ir\right) + \psi\left(\frac{1}{2}+\nu-ir\right)
+ \psi\left(\frac{1}{2}-\nu+ir\right) + \psi\left(\frac{1}{2}-\nu-ir\right)\bigg)
\; dr
\\ + \frac{C_\phi(n)}{\sqrt{n}} \frac{\varphi(M)}{M} 
\bigg\{\sum_{p\mid M} \frac{\log p}{1-p^{-1}} + \log \left(\frac{M}{(2\pi)^2 n}\right) + 2\gamma_0 \bigg\} 
H_0(h)
\end{multline}
Here $\psi(z) = \frac{\Gamma'}{\Gamma}(z)$ and $H_0(h)$ is given in \eqref{e:H0}. 
\end{lemma}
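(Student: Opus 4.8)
The plan is to derive both formulas by combining the two constituents of $M(s,\phi;n)$, namely $\zeta^{(M)}(2s)\frac{C_\phi(n)}{n^s}H_0(h)$ and $M_0(s,\phi;n)$ from Lemma~\ref{lem:M0_nonholo}, and then passing to the limit $s\to\frac12$. Near $s=\frac12$ the only factors that fail to be holomorphic are $\zeta^{(M)}(2s)=\zeta(2s)\prod_{p\mid M}(1-p^{-2s})$, with a simple pole at $s=\frac12$ of residue $\frac12\prod_{p\mid M}(1-p^{-1})$, and $\zeta(2-2s)$ occurring inside $M_0$, with a simple pole at $s=\frac12$ of residue $-\frac12$. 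The first step is thus to show these two polar parts cancel, by computing the residues together with the values at $s=\frac12$ of the remaining factors. In the holomorphic case $H_{\frac{k-1}{2}}(\tfrac12;h)=H_0(h)$ since the ratio of gamma functions in \eqref{e:Hk-1/2_h} equals $1$ at $s=\frac12$. In the non-holomorphic case I would shift the $u$-contour in \eqref{e:Hnu_h} to $\Re(u)=0$, which is legitimate because the only pole of $\frac{1}{\cos(\pi u)}$ crossed, at $u=\frac12$, has residue proportional to $h(-i/2)$ and hence vanishes by condition (4) on $h$; then at $s=\frac12$ the gamma ratio collapses and the resulting $\tan(\pi u)$-integral, after $u=ir$, is a constant multiple of $H_0(h)$, while $\frac{\pi}{\sin(\pi s)}H_\nu(s;h)$ is seen to be holomorphic at $s=\frac12$. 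Both residues then come out to $\pm\frac12\prod_{p\mid M}(1-p^{-1})\frac{C_\phi(n)}{\sqrt n}H_0(h)$ and cancel, so that $M(s,\phi;n)$ is holomorphic at $s=\frac12$ and $M(\tfrac12,\phi;n)$ equals the sum of the two finite parts in the Laurent expansions at $s=\frac12$.

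The second step is to compute those finite parts. I would write $s=\frac12+\varepsilon$ and expand each constituent to first order: $\zeta(2s)=\frac{1}{2\varepsilon}+\gamma_0+O(\varepsilon)$ and $\zeta(2-2s)=-\frac{1}{2\varepsilon}+\gamma_0+O(\varepsilon)$ produce the two copies of $\gamma_0$; the logarithmic derivative $\frac{d}{ds}\log\prod_{p\mid M}(1-p^{-2s})$ at $s=\frac12$ produces the arithmetic contribution at the primes dividing $M$; the elementary expansions of $(2\pi)^{4s-2}$ (respectively $(2\pi)^{4s-3}$), $M^{1-2s}$, $n^{-s}$ and $n^{s-1}$ produce the logarithms; and the $s$-derivative of the spectral factor produces the digamma integral. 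In the holomorphic case the key identity is that $\frac{d}{ds}\log\frac{\Gamma(1-s+\frac{k-1}{2}+ir)\Gamma(1-s+\frac{k-1}{2}-ir)}{\Gamma(s+\frac{k-1}{2}+ir)\Gamma(s+\frac{k-1}{2}-ir)}$ at $s=\frac12$ equals $-2(\psi(\tfrac k2+ir)+\psi(\tfrac k2-ir))$, since $1-s+\frac{k-1}{2}=s+\frac{k-1}{2}=\frac k2$ there; weighting by $h(r)r\tanh(\pi r)$ and using $H_{\frac{k-1}{2}}(\tfrac12;h)=H_0(h)$ gives the first line of \eqref{e:M_holo_t=0}. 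In the non-holomorphic case, since $\cos(\tfrac\pi2)=0$ the factor $\frac{\pi}{\sin(\pi s)}$ has vanishing $s$-derivative at $s=\frac12$, so only the $s$-derivative of the (contour-shifted) integrand in \eqref{e:Hnu_h} contributes; differentiating the gamma ratio there and simplifying produces the symmetric combination $\psi(\tfrac12\pm\nu\pm ir)$ of the four digammas in \eqref{e:M_nonholo_t=0}.

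The third step is bookkeeping: collecting all the constant terms, the two $\gamma_0$'s add to $2\gamma_0$; the logarithmic pieces coming from $(2\pi)^{\bullet}$, $M^{1-2s}$ and the powers of $n$ combine with the residue cross-term $-\tfrac12\log n$ into $\log(\frac{M}{(2\pi)^2 n})$; the prime-by-prime contributions involving $M$ assemble into $\sum_{p\mid M}\frac{\log p}{1-p^{-1}}$ and into the $\log M$ appearing in $\log(\frac{M}{(2\pi)^2 n})$; and the digamma integral is exactly the stated one. Using $\frac{\varphi(M)}{M}=\prod_{p\mid M}(1-p^{-1})$ this yields \eqref{e:M_holo_t=0} and \eqref{e:M_nonholo_t=0}.

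I expect the main obstacle to be precisely this last bookkeeping: because the simple pole of $\zeta$ multiplies holomorphic factors whose $O(\varepsilon)$-terms themselves feed into the constant term, the cross terms must be tracked with care to see that the various stray logarithms and Euler--Mascheroni constants recombine into the clean shape asserted. A secondary technical point, in the non-holomorphic case, is the justification of the contour shift in $H_\nu(s;h)$ and the verification that the term produced by differentiating $\cos(\pi(s-u))$ (rather than the gamma ratio) does not leave an extra contribution --- this is where the vanishing $h(\pm i/2)=0$ and the evenness of $h$ are used once more.
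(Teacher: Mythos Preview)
Your proposal is correct and follows essentially the same route as the paper. The paper writes $M(1/2+it,\phi;n)=\zeta(1+2it)M_1(t)+\zeta(1-2it)M_2(t)$, expands $\zeta(1\pm2it)=\pm\tfrac{1}{2it}+\gamma_0+O(t)$, checks $M_1(0)=M_2(0)$ for the pole cancellation, and then computes $M_1'(0)$, $M_2'(0)$ to extract the constant term---exactly your Laurent expansion in $\varepsilon=s-\tfrac12$ with a different name for the parameter. One minor organisational difference: for the non-holomorphic case the paper first rewrites $\tfrac{\pi}{\sin(\pi s)}H_\nu(s;h)$ as a linear combination involving both $H_\nu$ and $H_{-\nu}$ before differentiating, whereas you work directly with $H_\nu$ after shifting the $u$-contour to $\Re(u)=0$ (using $h(\pm i/2)=0$ to kill the crossed pole) and invoke the evenness of $h$ to symmetrise the two digammas into four; your path is slightly more direct and the paper's rearrangement is not spelled out there either.
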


\begin{proof}
Recalling \eqref{e:M0_holo} and \eqref{e:M0_nonholo}, when $\xi=1_N$ and taking $s=1/2+it$, 
we write 
\begin{equation}
M(1/2+it, \phi; n)
= \zeta(1+2it) M_1(t) + \zeta(1-2it) M_2(t)
\end{equation}
where 
\begin{equation}
M_1 (t) = \frac{C_\phi(n)}{n^{\frac{1}{2}+it}} \prod_{p\mid M}(1-p^{-1-2it}) H_0(h), 
\end{equation}
\begin{equation}
M_2(t) = (2\pi)^{4it} M^{-2it} \frac{\varphi(M)}{M}
\frac{C_\phi(n)}{n^{\frac{1}{2}-it}} 
H_{\frac{k-1}{2}}(1/2+it; h)
\end{equation}
when $\phi$ is a holomorphic cusp form, and 
\begin{multline}
M_2(t) = (2\pi)^{4it} 
M^{-2it} \prod_{p\mid M} (1-p^{-1})
\frac{C_\phi(n)}{n^{\frac{1}{2}-it}} 
\\ \times \frac{1}{2} 
\bigg\{\frac{\sin(\pi(it+\nu))}{\sin(\pi \nu) \cos(\pi it)} H_{-\nu}(1/2+it; h)
-\frac{\sin(\pi(it-\nu))}{\cos(\pi it)\sin(\pi \nu)} H_{\nu}(1/2+it; h)
\bigg\}, 
\end{multline}
when $\phi$ is a Maass form. 
Here $H_\nu(s; h)$ is given in \eqref{e:Hnu_h}. 

We now assume that $\phi$ is holomorphic. 
The arguments are almost identical in both cases: when $\phi$ is holomorphic or non-holomorphic. 
By the Laurent series expansion of $\zeta(s)$ at $s=1$, we get
\begin{equation}
\zeta(1\pm 2it) = \pm \frac{1}{2it} + \gamma_0 + O(t)
\end{equation}
where $\gamma_0$ is the Euler-Mascheroni constant. 
Then 
\begin{equation}
M(1/2+it, \phi; n) = \frac{1}{2it} (M_1(0) -M_2(0)) + \gamma_0(M_1(0)+M_2(0)) + \frac{1}{2i} (M_1'(0) - M_2'(0)) + O(t).
\end{equation}

For 
\begin{equation}
M_1(0) 
= \frac{\varphi(M)}{M} \frac{C_\phi(n)}{\sqrt{n}} H_0(h)
= M_2(0), 
\end{equation}
\begin{equation}
M_1'(0) =i \bigg(2\sum_{p\mid M} \frac{\log p}{1-p^{-1}}- \log n\bigg)
\frac{\varphi(M)}{M}\frac{C_\phi(n)}{\sqrt{n}} H_0(h)
\end{equation}
and 
\begin{multline}
M_2'(0) = i \bigg( 4\log (2\pi) - 2 \log M+ \log n \bigg) \frac{\varphi(M)}{M} 
\frac{C_\phi(n)}{\sqrt{n}} H_0(h)
\\ -  \frac{\varphi(M)}{M} \frac{C_\phi(n)}{\sqrt{n}} \frac{2i}{\pi^2} \int_{-\infty}^\infty rh(r) \tanh(\pi r) 
\bigg(\psi\left(\frac{k}{2}+ir\right) + \psi\left(\frac{k}{2}-ir\right)\bigg)
\; dr, 
\end{multline}
taking $t=0$, we get \eqref{e:M_nonholo_t=0}.

%
\end{proof}

In the following lemmas we now present explicit expressions for $L_1^\pm(s, \phi; n)$. 

\begin{lemma}\label{lem:L1+}
When $\phi$ is either a non-holomorphic automorphic form of type $\nu\in i\R$ and weight $0$, 
or a holomorphic cusp form of weight $k$, $\nu=\frac{k-1}{2}$, 
for $\Re(s)\geq \frac{1}{2}$, 
\begin{multline}\label{e:L1+nonholo_holo}
L_{1}^+(s, \phi; n)
= - i^k (2\pi)^{2s-1} R^{-2s} \tau(\xi_*)\xi_*(-1)
\frac{4}{2\pi i} \int_{(\sigma_u)}\frac{h(u/i) u}{\cos(\pi u)} 
\frac{\cos\left(\pi\left(s-u+\frac{k}{2}\right)\right)}{\pi} 
\frac{\Gamma\left(1-s-\nu+u\right)}{\Gamma\left(s+\nu+u\right)}
\\ \times 
\frac{1}{2\pi i } 
\int_{(\sigma_v)} \frac{\Gamma\left(1-s+\nu+v\right) \Gamma\left(s+\nu+v\right) \Gamma\left(u-v\right)}{\Gamma\left(1+u+v\right)}
n^v D(v+1/2, s-1/2; \phi; n)
\; dv \; du. 
\end{multline}

Here we choose the contours $\Re(v)=\sigma_v$ and $\Re(u)=\sigma_u$ satisfying $\Re(s) < \sigma_v < \sigma_u<\frac{3}{2}$. 
For this interval the series and the integral converge absolutely. 
The shifted Dirichlet series $D(v+1/2, s-1/2; \phi; n)$ is given in \eqref{e:Dphi}. 
\end{lemma}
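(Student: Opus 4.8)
The plan is to substitute the formulas for $L_{q,1}^+(s,u;\phi;n)$ from Proposition~\ref{prop:Lq_nonholo} into the definition \eqref{e:Phi_def} of $L_1^+(s,\phi;n)$, carry out the sum over $q$ using Lemma~\ref{lem:sum_cxiq}, and then rewrite $F_1$ (respectively $F_2$) by means of the Mellin--Barnes representation of Corollary~\ref{cor:F1} (respectively Corollary~\ref{cor:F2}). Throughout I would work with $\tfrac12\le\Re(s)<\tfrac32$ and a contour $\Re(u)=\sigma_u$ with $\Re(s)<\sigma_u<\tfrac32$; the identity for other $s$ then follows by analytic continuation, using the meromorphic continuation of $D$.

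First I would insert \eqref{e:Lq1+nonholo} (for non-holomorphic $\phi$) or \eqref{e:Lq1+holo} (for holomorphic $\phi$) into \eqref{e:Phi_def}. The dependence on $q\equiv 0\bmod M$ is only through $q^{-2s}$ and the Gauss sum $c_{\xi|q}(m)$, whose argument $m$ runs over $m\ge1$; interchanging the sum over $q$ with the sum over $m$, and using $L^{(M)}(2s,\xi)=L^{(M)}(2s,\xi_*)$ (valid since $R\mid M$) and $\xi(-1)=\xi_*(-1)$, Lemma~\ref{lem:sum_cxiq} replaces $L^{(M)}(2s,\xi)\sum_{M\mid q}c_{\xi|q}(m)q^{-2s}$ by $R^{-2s}\tau(\xi_*)\,\sigma_{1-2s}(m;\xi_*,M)$ (both sides vanishing when $\tfrac{M}{R\prod_{p\mid M,\,p\nmid R}p}\nmid m$). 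This produces
\begin{multline*}
L_1^+(s,\phi;n) = -(2\pi)^{2s-1}R^{-2s}\tau(\xi_*)\xi_*(-1)\,\frac{4}{2\pi i}\int_{(\sigma_u)}\frac{h(u/i)u}{\cos(\pi u)}\\
\times \sum_{m=1}^\infty \frac{C_\phi(m+n)\,\sigma_{1-2s}(m;\xi_*,M)}{(m+n)^{1-s}}\,\mathcal{F}\!\left(\tfrac{m+n}{n}\right)\,du,
\end{multline*}
where $\mathcal{F}=F_1(s,u,\nu;\cdot)$ in the non-holomorphic case and $\mathcal{F}=F_2(s,u,k;\cdot)$ in the holomorphic case.

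Next, since $x:=\tfrac{m+n}{n}>1$ for every $m\ge1$, I would replace $\mathcal{F}(x)$ by its integral representation \eqref{e:F1_x>1} or \eqref{e:F2_x>1}, choosing the auxiliary contour $\Re(v)=\sigma_v$ with $\Re(s)<\sigma_v<\sigma_u$ (which is legitimate in those corollaries, so no contour is moved and no residues appear). In the holomorphic case one uses $\sin(\pi(s-u+\tfrac{k-1}{2}))=-\cos(\pi(s-u+\tfrac k2))$ and $\nu=\tfrac{k-1}{2}$ to see that the gamma-quotients and the $v$-contour coincide with those of the non-holomorphic case, in which $k=0$ by convention; the overall factor $-i^k$ in \eqref{e:L1+nonholo_holo} is just the product of the $-1$ in \eqref{e:Lq1+nonholo}/\eqref{e:Lq1+holo} with the factor produced by \eqref{e:F1_x>1}/\eqref{e:F2_x>1}. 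Pulling the $v$-independent factors out of the inner integral, writing $x-1=\tfrac{m}{n}$ and collecting the powers of $m$, $n$ and $m+n$, the $m$-dependent summand becomes $C_\phi(m+n)(m+n)^{\nu}\sigma_{1-2s}(m;\xi_*,M)\,m^{-(1-s+\nu+v)}n^{v}$; hence, after interchanging the $m$-sum with the $v$-integral, the $m$-sum is exactly $n^{v}D(v+\tfrac12,s-\tfrac12;\phi;n)$ by the definition \eqref{e:Dphi} (with $\nu=\tfrac{k-1}{2}$ in the holomorphic case). This gives \eqref{e:L1+nonholo_holo}.

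The substantive point is to justify the two interchanges (the $q$-sum with the $m$-sum, and the $m$-sum with the $v$-integral) and the absolute convergence asserted for $\Re(s)<\sigma_v<\sigma_u<\tfrac32$. For the first interchange one uses the standard bound $\sum_{M\mid q}|c_{\xi|q}(m)|q^{-2\Re(s)}\ll_\epsilon m^{\epsilon}$, valid for $\Re(s)$ a little above $\tfrac12$, together with the decay of $F_1,F_2$. For the second one uses \eqref{e:F1_x>1_bound} and \eqref{e:F2_x>1_bound}, which give $\bigl(\tfrac{x}{x-1}\bigr)^{-1+s-\nu}\mathcal{F}(x)\ll(x-1)^{-\Re(u)}=(m/n)^{-\sigma_u}$, so that on $\Re(v)=\sigma_v$ the $m$-series is dominated by $\sum_m|C_\phi(m+n)|\,m^{\sigma_u-\sigma_v-1+\epsilon}$, which converges once $\sigma_v$ lies beyond the abscissa of absolute convergence of $D$ (essentially $\sigma_v>\Re(s)$, up to the exponent $\theta$ in the Ramanujan bound for $C_\phi$); the $u$-integral converges by the decay of $h$ and Stirling's formula, and one checks that $\sigma_u\in(\Re(s),\tfrac32)$ avoids the pole of $1/\cos(\pi u)$. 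Getting all these absolute-convergence ranges to overlap — so that Fubini applies and the identity, once established for $\Re(s)$ slightly above $\tfrac12$, extends to $\Re(s)=\tfrac12$ by continuity — is the main obstacle; the rest is a direct rearrangement.
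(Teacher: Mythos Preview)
Your proposal is correct and follows essentially the same route as the paper's proof: substitute \eqref{e:Lq1+nonholo}/\eqref{e:Lq1+holo} into \eqref{e:Phi_def}, execute the $q$-sum via Lemma~\ref{lem:sum_cxiq}, insert the Mellin--Barnes representations \eqref{e:F1_x>1}/\eqref{e:F2_x>1} for $x=\tfrac{m+n}{n}>1$, and recognize the resulting $m$-sum as $n^{v}D(v+\tfrac12,s-\tfrac12;\phi;n)$. Your unification of the two cases via $\sin(\pi(s-u+\tfrac{k-1}{2}))=-\cos(\pi(s-u+\tfrac{k}{2}))$ and the $k=0$ convention is exactly how the paper's stated formula is obtained, and your convergence discussion is somewhat more explicit than the paper's (which simply notes that the interval $\Re(s)<\sigma_v<\sigma_u$ is non-empty and lies in the region of absolute convergence of $D$).
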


\begin{proof}
By applying \eqref{e:Lq1+nonholo} to \eqref{e:Phi_def}, and then by Lemma~\ref{lem:sum_cxiq},
\begin{multline}
L_{1}^+(s, \phi; n)
 = - (2\pi)^{2s-1} R^{-2s} \tau(\xi_*) \xi_*(-1)
\frac{4}{2\pi i} \int_{(\sigma_u)}\frac{h(u/i) u}{\cos(\pi u)} 
\\ \times \sum_{m=1}^\infty \frac{C_\phi(m+n)\sigma_{1-2s}(m; \xi_*, M)}{(m+n)^{1-s}} 
F_1\left(s, u, \nu; \frac{m+n}{n}\right)\; du.
\end{multline}
By using the integral representation \eqref{e:F1_x>1} for $F_1$, we get
\begin{multline}
L_{1}^+(s, \phi; n)
= - (2\pi)^{2s-1} R^{-2s} \tau(\xi_*)\xi_*(-1)
\frac{4}{2\pi i} \int_{(\sigma_u)}\frac{h(u/i) u}{\cos(\pi u)} \frac{\cos(\pi(s-u))}{\pi} 
\frac{\Gamma\left(1-s- \nu+u\right)}
{\Gamma\left(s+ \nu+u\right)}
\\ \times \frac{1}{2\pi i} \int_{(\sigma_v)} \frac{\Gamma\left(1-s+ \nu+v\right) \Gamma\left(s+ \nu+v\right) \Gamma\left(u-v\right)}
{\Gamma\left(1+u+v\right)} 
\\ \times n^v\sum_{m=1}^\infty \frac{C_\phi(m+n)(m+n)^{\nu} \sigma_{1-2s}(m; \xi_*, M)m^{-\frac{1}{2}+s}}{m^{\frac{1}{2}+v+\nu}} 
\; dv \; du, 
\end{multline}
where $\max\{-1+\Re(s), -\Re(s)\} < \sigma_v < \Re(u)$. 
Here $\sigma_{1-2s}(n; \xi_*, M)$ is defined in Lemma~\ref{lem:sum_cxiq}. 

Since $\Re(s)< \Re(u)$, $\Re(s) < \Re(v) < \Re(u)$, 
the intersection of the intervals $1< 1-\Re(s) +\Re(v)$ and $\max\{-1+\Re(s), -\Re(s)\} < \sigma_v < \Re(u)$, is non-empty.

When $\phi$ is a holomorphic cusp form of weight $k$, 
the proof is almost parallel to the non-holomorphic case.  
We apply \eqref{e:Lq1+holo} to \eqref{e:Phi_def}, and then by Lemma~\ref{lem:sum_cxiq}, 
and use the integral representation \eqref{e:F2_x>1} for $F_2$.

\end{proof}

\begin{lemma}\label{lem:L1-nonholo}
When $\phi$ is a non-holomorphic automorphic form of type $\nu\in i\R$, for $\Re(s)\geq 1/2$, 
\begin{multline}\label{e:L1-_withF1}
L_1^-(s, \phi; n) 
\\ = -(2\pi)^{2s-1} R^{-2s} \tau(\xi_*)
\frac{4}{2\pi i}\int_{(\sigma_u)} \frac{h(u/i)u}{\cos(\pi u)} 
\sum_{m=1}^{n-1} \frac{C_\phi(m) \sigma_{1-2s}(n-m; \xi_*, M)}{m^{1-s}} F_1\left(s, u, \nu; \frac{m}{n}\right)
\; du.
\end{multline}
When $\phi$ is a holomorphic cusp form of weight $k$, for $\Re(s)\geq \frac{1}{2}$, 
\begin{multline}\label{e:L1-_withF2}
L_1^-(s, \phi; n) 
\\ = -(2\pi)^{2s-1} R^{-2s} \tau(\xi_*)
\frac{4}{2\pi i}\int_{(\sigma_u)} \frac{h(u/i)u}{\cos(\pi u)} 
\sum_{m=1}^{n-1} \frac{C_\phi(m) \sigma_{1-2s}(n-m; \xi_*, M)}{m^{1-s}} F_2\left(s, u, k; \frac{m}{n}\right)
\; du.
\end{multline}

When $\Re(s)=\frac{1}{2}$, we have the following integral representations. 
When $\phi$ is a non-holomorphic automorphic form of type $\nu\in i\R$, 
\begin{multline}\label{e:L1-nonholo}
L_1^-(s, \phi; n) 
= (2\pi)^{2s-1} R^{-2s} \tau(\xi_*)
\sum_{\pm}
\frac{\cos(\pi(s\pm \nu))}{2\pi \sin(\pm \pi \nu)}
\frac{4}{2\pi i}\int_{(\sigma_u)} 
\frac{h(u/i)u \tan(\pi u)}{\Gamma\left(s\mp \nu-u\right) \Gamma\left(s\mp \nu+u\right)}
\\ \times 
\frac{1}{2\pi i} \int_{(\sigma_v)} \Gamma\left(u-v\right) \Gamma\left(-u-v\right) \Gamma\left(s\mp \nu+v\right) 
\Gamma\left(1-s\mp \nu+v\right) 
\\ \times n^v\sum_{m=1}^{n-1} \frac{C_\phi(n-m)(n-m)^{\mp \nu} \sigma_{1-2s}(m; \xi_*, M)m^{-\frac{1}{2}+s}}
{m^{\frac{1}{2}+v\mp\nu}} 
\; dv \; du.
\end{multline}
When $\phi$ is a holomorphic cusp form of weight $k$,
\begin{multline}\label{e:L-holo}
L_1^-(s, \phi; n) 
= i^k (2\pi)^{2s-1} R^{-2s} \tau(\xi_*)
\frac{\cos(\pi(s-\frac{k-1}{2}))}{\pi} 
\frac{4}{2\pi i}\int_{(\sigma_u)} 
\frac{h(u/i)u \tan(\pi u)}{\Gamma\left(s-u+\frac{k-1}{2}\right)\Gamma\left(s+u+\frac{k-1}{2}\right)}
\\ \times 
\frac{1}{2\pi i} \int_{(\sigma_v)} \Gamma\left(u-v\right) \Gamma\left(-u-v\right) 
\Gamma\left(s+\frac{k-1}{2}+v\right) \Gamma\left(1-s+\frac{k-1}{2}+v\right)
\\ \times n^v \sum_{m=1}^{n-1} \frac{c_\phi(n-m) \sigma_{1-2s}(m; \xi_*, M)m^{-\frac{1}{2}+s}}
{m^{\frac{1}{2}+v+\frac{k-1}{2}}} 
\; dv \; du. 
\end{multline}
Recall that $c_\phi(m) = m^{\frac{k-1}{2}} C_\phi(m)$. 
Here we choose the contours $\Re(u)=\sigma_u$ and $\Re(v)=\sigma_v$, 
where $0< \sigma_u< \frac{1}{2}$ and $-\frac{1}{2}< \sigma_v<-\sigma_u$. 
\end{lemma}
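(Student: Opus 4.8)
The plan is to obtain \eqref{e:L1-_withF1} and \eqref{e:L1-_withF2} by direct substitution, and then the integral representations \eqref{e:L1-nonholo} and \eqref{e:L-holo} by inserting the Barnes-type forms of $F_1$, $F_2$ and a contour shift. First I would start from \eqref{e:Phi_def} with $\Phi$ equal to $L_1^-$ and plug in the formula \eqref{e:Lq1-nonholo} for $L_{q,1}^-$ (resp.\ \eqref{e:Lq1-holo} when $\phi$ is holomorphic). For $\Re(s)>1$ everything converges absolutely, so I may interchange the finite sum over $1\le m\le n-1$ with the sum over $q\equiv 0\bmod M$ and with the $u$-integral; this extends to $\Re(s)\ge\frac12$ by meromorphic continuation, using Lemma~\ref{lem:F1} and Lemma~\ref{lem:F2} for the continuation of $F_1$, $F_2$ in $s$. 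The inner sum that is left is $\sum_{q\equiv 0\bmod M}c_{\xi|q}(n-m)/q^{2s}$, and together with $L^{(M)}(2s,\xi)=L^{(M)}(2s,\xi_*)$ it is evaluated by Lemma~\ref{lem:sum_cxiq} applied to the \emph{positive} integer $n-m$: since $\sgn(n-m)=1$ we have $\xi_*(\sgn(n-m))=1$, giving $R^{-2s}\tau(\xi_*)\sigma_{1-2s}(n-m;\xi_*,M)$. This reproduces \eqref{e:L1-_withF1} and \eqref{e:L1-_withF2} verbatim.

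For the integral representations at $\Re(s)=\frac12$, I would first re-index the finite sum by $m\mapsto n-m$, so that $F_1(s,u,\nu;m/n)$ becomes $F_1(s,u,\nu;(n-m)/n)$ and the coefficient becomes $C_\phi(n-m)\sigma_{1-2s}(m;\xi_*,M)/(n-m)^{1-s}$ (and likewise with $F_2$). Then substitute the integral representation \eqref{e:F1_x<1} (resp.\ \eqref{e:F2_x<1}) at $x=(n-m)/n$: here $1-x=m/n$, so $\bigl(\tfrac{x}{1-x}\bigr)^{1-s\mp\nu}=\bigl(\tfrac{n-m}{m}\bigr)^{1-s\mp\nu}$ and $(1-x)^{-v}=(m/n)^{-v}$, the powers of $n-m$ collapse to $(n-m)^{\mp\nu}$ (resp.\ to $(n-m)^{(k-1)/2}$, which combines with $C_\phi(n-m)$ to give $c_\phi(n-m)$), the powers of $m$ assemble into $m^{-1/2+s}\big/m^{1/2+v\mp\nu}$ (resp.\ $m^{-1/2+s}\big/m^{1/2+v+(k-1)/2}$), and a factor $n^{v}$ appears. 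Since $0<\sigma_u<\tfrac12=\Re(s)$, a straight vertical line $\Re(v)=\sigma_v\in(-\tfrac12,-\sigma_u)$ already separates the poles of $\Gamma(u-v)\Gamma(-u-v)$ from those of $\Gamma(s\mp\nu+v)\Gamma(1-s\mp\nu+v)$, so no curved contour is needed. Finally, in the $u$-integrand I would use $\sin(\pi(s\mp\nu+u))/\cos(\pi u)=\sin(\pi(s\mp\nu))+\cos(\pi(s\mp\nu))\tan(\pi u)$ (resp.\ with $s-\tfrac{k-1}{2}$ in place of $s\mp\nu$): the $\tan(\pi u)$ piece, after collecting constants, is exactly the asserted \eqref{e:L1-nonholo} (resp.\ \eqref{e:L-holo}), and the remaining piece has an integrand that is odd in $u$ (because $h(u/i)u$ is odd while the reciprocal-gamma factor and, for a symmetrically chosen $v$-contour, the inner $v$-integral are even in $u$) and is holomorphic in $0\le\Re(u)\le\tfrac12$, so moving its $u$-line to $\Re(u)=0$ shows it vanishes.

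Two technical points would need care. First, moving the $u$-contour down to $0<\sigma_u<\tfrac12$ crosses only the pole of $1/\cos(\pi u)$ at $u=\tfrac12$, and this is cancelled because $h$ vanishes there: $h(\tfrac12/i)=h(-i/2)=0$ by hypothesis~(4) on the test function, so no residue is picked up, while $h(u/i)$ is holomorphic in $|\Re(u)|\le\tfrac12+\epsilon$ by hypothesis~(2); (when $\nu=0$ the decomposition \eqref{e:F1_x<1} is formally singular, and that case follows by continuity in $\nu$). Second, the resulting double integrals converge absolutely: with $\Re(u)=\sigma_u\in(0,\tfrac12)$ and $\Re(v)=\sigma_v\in(-\tfrac12,-\sigma_u)$, Stirling's formula gives exponential decay of the gamma-ratio in $|\Im(v)|$, and, using that $\tan(\pi u)$ is bounded away from its poles on this line and $h(u/i)\ll(|u|+1)^{-2-\epsilon}$, also in $|\Im(u)|$, while the sum over $1\le m\le n-1$ is finite. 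The only step that is not pure substitution or a standard contour manipulation is the bookkeeping showing that the non-$\tan$ contribution is exactly odd in $u$ and so drops out; that is the point I would expect to be the main obstacle to get cleanly.
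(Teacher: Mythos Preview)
Your proposal is correct and follows essentially the same route as the paper: apply \eqref{e:Lq1-nonholo} (resp.\ \eqref{e:Lq1-holo}) inside \eqref{e:Phi_def}, sum the $q$-series via Lemma~\ref{lem:sum_cxiq}, then insert \eqref{e:F1_x<1} (resp.\ \eqref{e:F2_x<1}), split the $\sin(\pi(s\mp\nu+u))/\cos(\pi u)$ factor, and kill the non-$\tan$ piece by the evenness of $h$. The paper phrases the last step slightly differently---it moves the $u$-line from $\sigma_u$ to $-\sigma_u$ and changes $u\mapsto -u$ to see the integral equals its own negative (this is \eqref{e:hu_evenuint_0})---which is exactly your oddness observation in contour form; and it leaves the re-indexing $m\mapsto n-m$ and the crossing of $u=\tfrac12$ (where $h(u/i)=h(-i/2)=0$) implicit, whereas you spell both out.
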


\begin{proof}
By applying \eqref{e:Lq1-nonholo} to \eqref{e:Phi_def}, and then by Lemma~\ref{lem:sum_cxiq}, 
we get \eqref{e:L1-_withF1} and \eqref{e:L1-_withF2}. 

By using the integral representation \eqref{e:F1_x<1} for $F_1$, we get
\begin{multline}
L_1^-(s, \phi; n) 
= (2\pi)^{2s-1} R^{-2s} \tau(\xi_*)
\sum_{\pm} \frac{4}{2\pi i}\int_{(\sigma_u)} \frac{h(u/i)u}{\cos(\pi u)} 
\frac{\sin(\pi(s\pm \nu+u))}{2\pi \sin(\pm \pi \nu)}
\frac{1}{\Gamma\left(s\mp \nu-u\right) \Gamma\left(s\mp \nu+u\right)}
\\ \times 
\frac{1}{2\pi i} \int_{C} \Gamma\left(u-v\right) \Gamma\left(-u-v\right) \Gamma\left(s\mp\nu+v\right) 
\Gamma\left(1-s\mp\nu+v\right) 
\\ \times n^v\sum_{m=1}^{n-1} \frac{C_\phi(m)m^{\mp \nu} \sigma_{1-2s}(n-m; \xi_*, M)(n-m)^{-\frac{1}{2}+s}}{(n-m)^{\frac{1}{2}+v\mp\nu}} 
\; dv \; du
\end{multline}
Here the contour $C$ separates the poles of gamma functions. 

Initially we take the contour for the $u$-line of integration as $\Re(u)=\sigma_u$, 
$\frac{1}{2} \leq \Re(s) < \sigma_u < \frac{3}{2}$. 
Assume that $\Re(s)=1/2$ and move the $u$-line of integration $\Re(u)=\sigma_u$ where $0<\sigma_u< \frac{1}{2}$, 
and then take the contour $C$ as $\Re(v)=\sigma_v$ where $-\frac{1}{2} < \sigma_v < -\Re(u)$. 
On these contour lines of integrals, 
using that $\sin(\pi(s+\nu+u)) = \sin(\pi(s+\nu))\cos(\pi u) + \cos(\pi(s+\nu))\sin(\pi u)$, 
we get
\begin{multline}
\frac{4}{2\pi i}\int_{(\sigma_u)} \frac{h(u/i)u}{\cos(\pi u)} 
\frac{\sin(\pi(s+\nu+u))}{2\pi \sin(\pi \nu)}
\frac{\Gamma\left(u-v\right) \Gamma\left(-u-v\right)}{\Gamma\left(s-\nu-u\right) \Gamma\left(s-\nu+u\right)}\; du
\\ = \frac{\sin(\pi(s+\nu))}{2\pi \sin(\pi \nu)}
\frac{4}{2\pi i}\int_{(\sigma_u)} h(u/i)u
\frac{\Gamma\left(u-v\right) \Gamma\left(-u-v\right)}{\Gamma\left(s-\nu-u\right) \Gamma\left(s-\nu+u\right)}\; du
\\ + \frac{\cos(\pi(s+\nu))}{2\pi \sin(\pi \nu)}
\frac{4}{2\pi i}\int_{(\sigma_u)} h(u/i)u \tan(\pi u)
\frac{\Gamma\left(u-v\right) \Gamma\left(-u-v\right)}{\Gamma\left(s-\nu-u\right) \Gamma\left(s-\nu+u\right)}\; du.
\end{multline}
Now we compute the first integral. 
Since $\Re(v) < -\Re(u)$, and also $h(u/i)=h(-u/i)$, by moving the $u$-line of integration to $\Re(u)=-\sigma_u$, 
and then changing the variable, we get 
\begin{equation}\label{e:hu_evenuint_0}
\frac{1}{2\pi i}\int_{(\sigma_u)} h(u/i)u
\frac{\Gamma\left(u-v\right) \Gamma\left(-u-v\right)}{\Gamma\left(s-\nu-u\right) \Gamma\left(s-\nu+u\right)}\; du
=0.
\end{equation}
Then we get \eqref{e:L1-nonholo}. 

When $\phi$ is a holomorphic cusp form of weight $k$, 
the proof is almost identical to the proof for the non-holomorphic case. 
We apply \eqref{e:Lq1-holo} to \eqref{e:Phi_def}, and then apply Lemma~\ref{lem:sum_cxiq}. 
Finally we use the integral representation \eqref{e:F2_x<1} for $F_2$. 
The remaining computations are also similar to the non-holomorphic case. 

\end{proof}

\begin{lemma}\label{lem:L2}
For $\Re(s)\geq \frac{1}{2}$, when $\phi$ is a Maass form of type $\nu$, 
\begin{multline}\label{e:L2}
L_2 (s, \phi; n)
= (2\pi)^{2s-1} R^{-2s} \tau(\xi_*) c_\phi(-1)\frac{\cos(\pi \nu)}{\pi} 
\frac{4}{2\pi i} \int_{(\sigma_u)}\frac{h(u/i) u}{\cos(\pi u)} 
\\ \times \frac{1}{2\pi i} \int_{(\sigma_{v, 2})}  
\frac{\Gamma\left(u-v\right)\Gamma\left(1-s+v-\nu\right)\Gamma\left(1-s+v+\nu\right)}
{\Gamma\left(1+u+v\right)} 
n^v \sum_{m=1}^\infty \frac{C_\phi(m)\sigma_{1-2s}(n+m; \xi_*, M)}{m^{1-s+v}} \; dv\; du, 
\end{multline}
where $\Re(s) < \sigma_{v, 2}< \Re(u)$. 
\end{lemma}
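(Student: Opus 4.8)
The plan is to follow the same template as in the proofs of Lemmas~\ref{lem:M_nonholo}--\ref{lem:L1-nonholo}: start from the definition \eqref{e:Phi_def} of $L_2(s,\phi;n)$ (with $\Phi$ taken to be $L_2$), insert the formula \eqref{e:Lq2} for $L_{q,2}(s,u;\phi;n)$, carry out the sum over $q\equiv 0\bmod M$ using Lemma~\ref{lem:sum_cxiq}, and then replace $F_3$ by the integral representation \eqref{e:F3} of Corollary~\ref{cor:F1}, which (unlike $F_1$ and $F_2$) is valid uniformly for all $x>0$, so that no splitting of the $m$-sum according to $m\lessgtr n$ is needed here.

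I would first work in the region $1<\Re(s)<\sigma_u<3/2$, where every sum and integral in sight converges absolutely: the $q$-sum because $|c_{\xi|q}(n+m)|$ is bounded polynomially in $q$ while $2\Re(s)>2$; the $m$-sum by the bound \eqref{e:F3_bound} on $F_3$ together with $\sum_{m\le X}|C_\phi(m)|\ll X$ (this is what forces $\Re(s)<\Re(u)$); the $u$-integral by the decay of $h$; and, once \eqref{e:F3} has been substituted, the $v$-integral by Stirling. One may then freely interchange the order of summation and evaluate the inner sum $L^{(M)}(2s,\xi)\sum_{M\mid q}c_{\xi|q}(n+m)q^{-2s}$ by Lemma~\ref{lem:sum_cxiq}; here one uses $L^{(M)}(2s,\xi)=L^{(M)}(2s,\xi_*)$ (valid since $R\mid M$, as noted in the proof of Lemma~\ref{lem:M_nonholo}) and $\xi_*(\sgn(n+m))=1$ since $n,m\ge 1$, which turns the $q$-sum into the factor $R^{-2s}\tau(\xi_*)\,\sigma_{1-2s}(n+m;\xi_*,M)$. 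Substituting \eqref{e:F3} for $F_3$ with $x=n/m$, so that $x^v=n^v m^{-v}$ combines with $m^{-(1-s)}$ to give $m^{-(1-s+v)}$, and interchanging the $v$-integral with the $m$-sum (again justified by absolute convergence, now via $\sigma_{1-2s}(n+m;\xi_*,M)\ll_\epsilon (n+m)^\epsilon$ in this range), then produces exactly the right-hand side of \eqref{e:L2}, with the contour chosen so that $\Re(s)<\sigma_{v,2}<\Re(u)$, a sub-interval of the range $-1+\Re(s)<\sigma_v<\Re(u)$ permitted in \eqref{e:F3}.

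To reach the stated range $\Re(s)\ge 1/2$ I would invoke analytic continuation. With the contours held at $\Re(s)<\sigma_{v,2}<\sigma_u<3/2$, no pole of the gamma integrand in \eqref{e:L2} crosses either contour as $\Re(s)$ decreases to $1/2$: the poles of $\Gamma(u-v)$ stay to the right of $\Re(v)=\sigma_{v,2}$, and those of $\Gamma(1-s-\nu+v)\Gamma(1-s+\nu+v)$ stay to the left because $\nu\in i\R$. The inner $m$-sum converges for $\Re(v)>\Re(s)$ (again from $\sum_{m\le X}|C_\phi(m)|\ll X$ and the polynomial bound for $\sigma_{1-2s}$), which is precisely why the constraint $\sigma_{v,2}>\Re(s)$ is imposed, so the right-hand side of \eqref{e:L2} is holomorphic on $1/2\le\Re(s)<\sigma_{v,2}$; since $L_2(s,\phi;n)$ is holomorphic there as well, the two agree by uniqueness of continuation.

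The step I expect to be the main obstacle is the bookkeeping rather than any single estimate: keeping all constants and signs correct through the $q$-summation (in particular the reductions $L^{(M)}(2s,\xi)=L^{(M)}(2s,\xi_*)$ and $\xi_*(\sgn(n+m))=1$) and carrying out the change of variable $x=n/m$ in $F_3$ so that the powers of $m$ and $n$ come out exactly as in \eqref{e:L2}. The convergence checks and the analytic continuation are routine given Lemma~\ref{lem:sum_cxiq}, Corollary~\ref{cor:F1}, and the estimate \eqref{e:F3_bound}.
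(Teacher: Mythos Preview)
Your proposal is correct and follows exactly the route the paper takes: apply \eqref{e:Lq2} to \eqref{e:Phi_def}, evaluate the $q$-sum via Lemma~\ref{lem:sum_cxiq} (equation \eqref{e:sum_cxiq}), and substitute the integral representation \eqref{e:F3} for $F_3$. The paper's own proof is a single sentence listing these three ingredients, so your version simply fills in the convergence and continuation details that the paper leaves implicit.
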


\begin{proof}
By applying \eqref{e:Lq2} to \eqref{e:Phi_def}, and then by using \eqref{e:sum_cxiq}, 
and applying the integral representation of $F_3(s, u, \nu; x)$ in \eqref{e:F3}, 
we get \eqref{e:L2}.
\end{proof}

\section{Proof of Theorem~\ref{thm:upperbound_first}}
Recall $h= h_{T, \alpha}$, with $0<\alpha \le 1$.
\subsection{The holomorphic upper bounds}
When $\phi$ is a holomorphic cusp form of weight $k$, level $N$ with the trivial central character modulo $N$, 
recall that 
\begin{equation}
\phi(z) = \sum_{n=1}^\infty c_\phi(n)e^{2\pi inz} 
\end{equation}
and $C_\phi(n) = c_\phi(n)n^{-\frac{k-1}{2}}$. 
By Theorem~\ref{thm:first_holo}, 
\begin{equation}
K(1/2+it, \phi; n, h)
= M(1/2+it, \phi; n) + L_1^-(1/2+it, \phi; n) + L_1^+(1/2+it, \phi; n)
\end{equation}
where
\begin{multline}
M(1/2+it, \phi; n)
= \zeta^{(M)}(1+2it) \frac{C_\phi(n)}{n^{\frac{1}{2}+it}} 
H_0(h)
\\ + (2\pi)^{4it} \zeta(1-2it)M^{-2it} \prod_{p\mid M} (1-p^{-1})
\frac{C_\phi(n)}{n^{\frac{1}{2}-it}} 
H_{\frac{k-1}{2}}(1/2+it; h), 
\end{multline}
where $H_0(h)$ and $H_{\frac{k-1}{2}}(1/2+it; h)$ are given in \eqref{e:H0} and \eqref{e:Hnu_h} respectively,
\begin{multline}\label{e:L-_1/2}
L_1^-(1/2+it, \phi; n)
= -\frac{(-1)^{k} (2\pi)^{2it} \cos(\pi it)}{\pi} 
\frac{4}{2\pi i} \int_{(\epsilon)} \frac{h(u/i) u \tan(\pi u)}
{\Gamma\left(u+it+\frac{k}{2}\right) \Gamma\left(-u+it+\frac{k}{2}\right)}
\\ \times 
\frac{1}{2\pi i} \int_{(-\epsilon-\sigma_1)} \Gamma\left(u-v\right) \Gamma\left(-u-v\right) 
\Gamma\left(v-it+\frac{k}{2}\right) \Gamma\left(v+it+\frac{k}{2}\right)
\\ \times n^v \sum_{m=1}^{n-1} \frac{c_\phi(n-m) \sigma_{-2it}(m; M)}{m^{-it+v+\frac{k}{2}}} \; dv \; du, 
\end{multline}
for some sufficiently small $\sigma_1, \epsilon>0$, and 
\begin{multline}\label{e:L+_1/2}
L_1^+(1/2+it, \phi; n)
= (2\pi)^{2it} i^k 
\frac{4}{2\pi i} \int_{(\sigma_u)}\frac{h(u/i) u}{\cos(\pi u)}
\frac{1}{\Gamma\left(-u+it+\frac{k}{2}\right)\Gamma\left(u+it+\frac{k}{2}\right)}
\\ \times \frac{1}{2\pi i} \int_{(\sigma_v)} \frac{\Gamma\left(-it+\frac{k}{2}+v\right) \Gamma\left(it+\frac{k}{2}+v\right)\Gamma\left(-v+u\right)}
{\Gamma\left(1+u+v\right)} 
n^{v} \sum_{m=1}^\infty \frac{c_\phi(m+n)\sigma_{-2it}(m; M)}{m^{-it+v+\frac{k}{2}}} 
\; dv\; du, 
\end{multline}
for $\frac{5}{4}+3\epsilon < \sigma_u < \frac{3}{2}$ and $\frac{1}{2} < \sigma_v < \sigma_u$. 
Here, when $\frac{M}{\prod_{p\mid M} p}\mid n$, 
\begin{equation}
\sigma_{-2it}(n; M) = \sigma_{-2it}(n; 1_N, M)
=
\sigma_{-2it}(n; 1_M) P_M(2it, n)
\prod_{p\mid M} p^{-2it\ord_p(n)}
\end{equation}
and $\sigma_{-2it}(n; M)=0$ otherwise, 
%
as given in Lemma~\ref{lem:sum_cxiq}. 

Now we fix $T\gg 1$, $0< \alpha \leq 1$ and take $h(r)=h_{T, \alpha}(r)$ as given in \eqref{e:testh_f}. 
An upper bound for $L_1^-$ is given in the following lemma.

\begin{lemma}\label{lemma:5.1}
For any $\epsilon>0$,  we have
\begin{equation}
L_1^-(1/2+it, \phi; n) \ll M^\epsilon n^{\frac{1}{2}+\epsilon} T^{\alpha+\beta+\epsilon}, 
\end{equation}
where $|t|=T^\beta$ for $\beta <1$. 
When $t=0$ we take $\beta=-\infty$.
\end{lemma}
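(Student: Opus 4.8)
The plan is to estimate the double contour integral \eqref{e:L-_1/2} directly. Two features make this manageable: the inner sum over $m$ is \emph{finite} ($1\le m\le n-1$), so it may be moved past both contour integrals with no convergence issue, and the exponentially large and small factors in the integrand all cancel once the test function $h_{T,\alpha}$ has localized the outer variable. The first step is to move the $v$-line of integration from $\Re(v)=-\epsilon-\sigma_1$ leftwards to $\Re(v)=\frac{1-k}{2}+\delta$ for a small fixed $\delta>0$. This crosses no poles: the poles of $\Gamma(v+\frac k2\pm it)$ lie at $\Re(v)=-\frac k2-j$, $j\ge 0$, hence strictly to the left of $\frac{1-k}{2}+\delta$, while those of $\Gamma(u-v)$ and $\Gamma(-u-v)$ lie at $\Re(v)\ge\epsilon$ and $\Re(v)\ge-\epsilon$, hence to the right of the starting line; the shift is justified by the rapid decay in $\Im(v)$ coming from the four $\Gamma$-factors. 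The gain is that on the new contour $|m^{-it+v+k/2}|^{-1}=m^{-\frac12-\delta}$ and $|n^{v}|=n^{\frac{1-k}{2}+\delta}$, so, since $|c_\phi(n-m)|=(n-m)^{\frac{k-1}{2}}|C_\phi(n-m)|$, one has $|n^{v}c_\phi(n-m)|=(1-m/n)^{\frac{k-1}{2}}n^{\delta}|C_\phi(n-m)|\le n^{\delta}|C_\phi(n-m)|$; together with $|\sigma_{-2it}(m;M)|\ll (Mm)^{\epsilon}$ (from the shape of $\sigma_{1-2s}(\cdot;\xi_*,M)$ in Lemma~\ref{lem:sum_cxiq}) this bounds the inner $m$-sum, uniformly in $\Im(v)$, by $(Mn)^{\epsilon}n^{\delta}\sum_{m=1}^{n-1}\frac{|C_\phi(n-m)|}{m^{1/2+\delta}}$.

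Next I would bound the $v$-integral by Stirling. Writing $r=\Im(u)$ (so $\Re(u)=\epsilon$) and $\tau=\Im(v)$, Stirling gives $|\Gamma(u-v)\Gamma(-u-v)|\ll (1+|r|+|\tau|)^{O_k(1)}e^{-\pi\max(|r|,|\tau|)}$ and $|\Gamma(v+\tfrac k2-it)\Gamma(v+\tfrac k2+it)|\ll (1+|\tau|+|t|)^{O(\delta)}e^{-\pi\max(|\tau|,|t|)}$, and integrating in $\tau\in\R$ (the bulk of the mass coming from $|\tau|\lesssim 1+|t|$) one gets
\[
\Bigl|\,v\text{-integral at }u\,\Bigr|\ \ll\ e^{-\pi|r|-\pi|t|}\,(1+|r|)^{k-2}\,(1+|t|)^{1+2\delta}\,(Mn)^{\epsilon}\,n^{\frac{1-k}{2}+\delta}\sum_{m=1}^{n-1}\frac{|C_\phi(n-m)|}{m^{1/2+\delta}}.
\]
On the outer side, $|\cos(\pi it)|=\cosh(\pi t)\ll e^{\pi|t|}$, $|u\tan(\pi u)|\ll 1+|r|$ (since $\tan(\pi u)$ is bounded on $\Re(u)=\epsilon$), and $|\Gamma(it-u+\tfrac k2)\Gamma(it+u+\tfrac k2)|^{-1}\ll |t-r|^{\frac12-\frac k2+\epsilon}|t+r|^{\frac12-\frac k2-\epsilon}e^{\pi\max(|r|,|t|)}$ again by Stirling. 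Because $h_{T,\alpha}(r-i\epsilon)$ is negligible outside $\bigl||r|-T\bigr|\ll T^{\alpha}\sqrt{\log T}$ and $|t|=T^{\beta}$ with $\beta<1$, we have $|t|<|r|$ throughout the effective range, so every $\max$ equals $|r|$, all the exponentials cancel, $e^{\pi|t|}e^{\pi|r|}e^{-\pi|r|-\pi|t|}=1$, and the polynomial powers in $|r|$ combine to $(1+|r|)^{1+(1-k)+(k-2)-2\delta}=(1+|r|)^{-2\delta}\le 1$. The outer integrand is therefore $\ll |h_{T,\alpha}(r-i\epsilon)|\,(1+|t|)^{1+2\delta}(Mn)^{\epsilon}n^{\frac{1-k}{2}+\delta}\sum_{m<n}\frac{|C_\phi(n-m)|}{m^{1/2+\delta}}$.

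Finally, $\int_{-\infty}^{\infty}|h_{T,\alpha}(r-i\epsilon)|\,dr\ll T^{\alpha}$ (two Gaussians of width $T^{\alpha}$ times a bounded rational factor), and $n^{\frac{1-k}{2}+\delta}(n-m)^{\frac{k-1}{2}}\le n^{\delta}$ so $\sum_{m=1}^{n-1}\frac{|C_\phi(n-m)|}{m^{1/2+\delta}}\ll n^{1/2+\epsilon}$ by Deligne's bound $|C_\phi(\ell)|\ll\ell^{\epsilon}$. Collecting everything and taking $\delta=\epsilon/2$ gives
\[
L_1^-(1/2+it,\phi;n)\ \ll\ M^{\epsilon}\,n^{1/2+\epsilon}\,T^{\alpha}\,(1+|t|)^{1+\epsilon}\ \ll\ M^{\epsilon}\,n^{1/2+\epsilon}\,T^{\alpha+\beta+\epsilon},
\]
using $|t|=T^{\beta}$, $\beta<1$ (and reading the $(1+|t|)$-factor as $1$, i.e. $\beta=-\infty$, when $t=0$); keeping the $m$-sum intact yields the sharper bound $M^{\epsilon}n^{\epsilon}T^{\alpha+\beta+\epsilon}\sum_{m=1}^{n-1}\frac{|C_\phi(n-m)|}{m^{1/2+\epsilon}}$ used in Theorem~\ref{thm:upperbound_first}. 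The step needing real care is the simultaneous cancellation of exponential and polynomial growth: one must place the $v$-contour where Stirling applies to all four $\Gamma$-quotients, verify that $|t|<|r|$ on the support of $h_{T,\alpha}$ — precisely where $\beta<1$ is used — and check that the polynomial exponents $1$, $1-k$, $k-2$ from $u\tan(\pi u)$, the outer $\Gamma$-quotient and the $v$-integral add up to a non-positive power. Everything else (pole-freeness of the shift, interchanging the finite $m$-sum with the integrals, the Gaussian localization) is routine.
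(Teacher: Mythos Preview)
Your proof is correct and follows essentially the same approach as the paper's: shift the $v$-contour to $\Re(v)\approx\tfrac{1-k}{2}$ so the finite $m$-sum is bounded by $M^{\epsilon}n^{1/2+\epsilon}$, then apply Stirling to the gamma ratios, use the localization of $h_{T,\alpha}$ near $|\Im u|\sim T$ together with $|t|=T^{\beta}$, $\beta<1$, to see that all exponential factors cancel, and integrate the residual polynomial over an $(\Im u,\Im v)$-box of size roughly $T^{\alpha}\times T^{\beta}$. The paper organizes the Stirling case analysis slightly differently (isolating the non-decaying region $|\Im v|\le|t|\le|\Im u|$ and tracking the polynomial there) and takes $\sigma_v=\tfrac{1-k}{2}-\epsilon$ rather than $\tfrac{1-k}{2}+\delta$, but the content is the same.
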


\begin{proof}
Recalling \eqref{e:L-_1/2}, we move the $v$ line of integration to $\Re(v)=1/2-k/2-\epsilon$.
On this line of integration, 
\begin{equation}
\left|n^v \sum_{\ell=1}^{n-1} \frac{c_\phi(n-\ell)\sigma_{-2it}(\ell; M)}{\ell^{v-it+\frac{k}{2}}}\right|
\ll M^\epsilon n^{\frac{1}{2}-\frac{k}{2}-\epsilon} \sum_{\ell=1}^{n-1} \frac{(n-\ell)^{\frac{k-1}{2}}}{\ell^{\frac{1}{2}-\epsilon}} 
\ll M^\epsilon n^{\frac{1}{2}+\epsilon}. 
\end{equation}

We now consider the ratio of gamma functions in the integrand of $L_1^-(1/2+it, \phi; n)$ in \eqref{e:L-_1/2}: 
\begin{equation}\label{e:integrand_L-1/2}
\frac{u\tan(\pi u) \Gamma\left(-v+u\right) \Gamma\left(-v-u\right) 
\Gamma\left(v-it+\frac{k}{2}\right) \Gamma\left(v+it+\frac{k}{2}\right)}
{\Gamma\left(\frac{1}{2}+it\right)\Gamma\left(\frac{1}{2}-it\right) \Gamma\left(u+it+\frac{k}{2}\right) \Gamma\left(-u+it+\frac{k}{2}\right)}.
\end{equation}
Let $v=\sigma_v+ir$ and $u=\sigma_u+i\gamma$. 
By Stirling's formula, the exponential part of the gamma factor contribution is given by 
\begin{equation}
\exp\left(-\frac{\pi}{2}\big(-2|t|-2\max\{|\gamma|, |t|\} + 2\max\{|r|,|\gamma|\}+2\max\{|r|, |t|\}\big)\right).
\end{equation}
So it is exponentially decreasing unless $|r| \leq |t| \leq |\gamma|$.
In this case, the polynomial contribution of \eqref{e:integrand_L-1/2} is given by 
\begin{equation}\label{e:holomorphic_upper_poly1}
\frac{|\gamma| |\gamma-r|^{\sigma_u-\sigma_v-\frac{1}{2}} |\gamma +r|^{-\sigma_u-\sigma_v-\frac{1}{2}} 
|r-t|^{\sigma_v+\frac{k}{2}-\frac{1}{2}} |r+t|^{\sigma_v+\frac{k}{2}-\frac{1}{2}}}
{|\gamma+t|^{\sigma_u+\frac{k}{2}-\frac{1}{2}}|-\gamma+t|^{-\sigma_u+\frac{k}{2}-\frac{1}{2}}}.
\end{equation}
Recall we have $|t|= T^\beta$, with $\beta < 1$.   
Recall also, that because of our choice of the test function $h=h_{T, \alpha}$,
the integral has exponential decay unless $|\gamma-T| \ll T^\alpha$, with $0 <\alpha \le1$.  It follows that
if $\beta < 1-\epsilon$, for $\epsilon >0$ then as 
$|r| \leq |t| \leq |\gamma|$, for large $T$, 
the expression above in \eqref{e:holomorphic_upper_poly1} is bounded by
\begin{equation}
\ll 
T^{1-2\sigma_v-k}|r-t|^{\sigma_v+\frac{k}{2}-\frac{1}{2}} |r+t|^{\sigma_v+\frac{k}{2}-\frac{1}{2}}. 
\end{equation}
As this integrand is integrated over a $\gamma$ interval of length $\ll T^\alpha$ and over an $r$ interval of length $\ll T^\beta$, and $T^\beta$ dominates $|r|$ over most of this interval, the integrand before integrating is
\begin{equation}
\ll T^{1-2\sigma_v-k+\beta(2\sigma_v+k-1)} = T^{2\epsilon(1-\beta)},
\end{equation}
as $\sigma_v=1/2-k/2-\epsilon$, and after integrating, the integral is 
\begin{equation}
\ll T^{\alpha+\beta+\epsilon},
\end{equation}
with a different $\epsilon >0$.
\end{proof}

\begin{lemma}\label{lemma:5.2}
For any $\epsilon>0$, we have 
\begin{equation}\label{e:L1+_1/2+it_bound}
L_1^+(1/2+it, \phi; n) \ll 2^{\frac{k}{2}} M^{\epsilon} n^{\frac{1}{2}+\epsilon} T^{1+\epsilon}
\end{equation}
if $|t| =T^\beta$, with $ \beta <1$ and the choice of the test function $h$ as given in \eqref{e:testh_f}.
\end{lemma}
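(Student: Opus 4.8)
The plan is to estimate $L_1^+(1/2+it,\phi;n)$ directly from \eqref{e:L+_1/2}, in which the inner Dirichlet series equals the shifted Dirichlet series $D(v+\tfrac12,it;\phi;n)$ of \eqref{e:Dphi} and appears multiplied by $n^v$. The key manoeuvre is to split the $m$-sum at $m=n$, writing $\sum_{m=1}^\infty=\sum_{m=1}^n+\sum_{m=n+1}^\infty$, and to move the inner $v$-line in opposite directions on the two ranges. For $m\le n$ the Deligne bound $|c_\phi(m+n)|\ll(m+n)^{(k-1)/2+\epsilon}\ll 2^{k/2}n^{(k-1)/2+\epsilon}$ loses a power $n^{(k-1)/2}$ that will be recovered by taking $\Re(v)$ negative; for $m>n$ the sharper bound $|c_\phi(m+n)|\ll 2^{k/2}m^{(k-1)/2+\epsilon}$ together with the extra decay in $m$ already suffices with $\Re(v)$ just above $\tfrac12$.

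For the finite piece $\sum_{m=1}^n$ the inner series is a finite linear combination of the functions $m^{-v}$, hence entire in $v$, so its contour can be shifted freely to a line $\Re(v)=\sigma_v'$ with $\sigma_v'+\tfrac{k-1}{2}$ slightly negative; since $k\ge 2$ this line still lies to the right of the poles $v=-\tfrac k2\pm it-j$ ($j\ge0$) of $\Gamma(-it+\tfrac k2+v)\Gamma(it+\tfrac k2+v)$, so no residues are crossed. There $|n^v|=n^{\sigma_v'}$ combines with $|c_\phi(m+n)|\ll 2^{k/2}n^{(k-1)/2+\epsilon}$, $|\sigma_{-2it}(m;M)|\ll M^\epsilon m^\epsilon$, and $|m^{-it+v+k/2}|=m^{\sigma_v'+k/2}$ (an exponent slightly above $\tfrac12$) to give
\[
\Bigl|\,n^v\sum_{m=1}^n\frac{c_\phi(m+n)\sigma_{-2it}(m;M)}{m^{-it+v+k/2}}\Bigr|\ \ll\ 2^{k/2}M^\epsilon n^{\sigma_v'+(k-1)/2+\epsilon}\sum_{m\le n}m^{-1/2+\epsilon}\ \ll\ 2^{k/2}M^\epsilon n^{1/2+\epsilon}.
\]
For the tail $\sum_{m>n}$ we keep $\Re(v)=\sigma_v$ just above $\tfrac12$, where $D$ is absolutely convergent so that nothing has to be moved, and $|c_\phi(m+n)|\ll 2^{k/2}m^{(k-1)/2+\epsilon}$ gives $\sum_{m>n}m^{-\sigma_v-1/2+\epsilon}\ll n^{1/2-\sigma_v+\epsilon}$, which again combines with $|n^v|=n^{\sigma_v}$ to produce $\ll 2^{k/2}M^\epsilon n^{1/2+\epsilon}$.

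It then remains to bound the surviving double integral in $u$ and $v$ by $T^{1+\epsilon}$. One inserts Stirling's formula for every gamma factor: the test function $h_{T,\alpha}$ confines $|\Im u|$ to an interval of length $\ll T^\alpha$ about $T$ while the factor $\frac{1}{\cos\pi u}$ supplies the compensating $e^{-\pi|\Im u|}$, and the exponential part of the integrand fails to decay in $\Im v$ only on a range controlled by $|t|$ and $|\Im u|$. On that range the polynomial contribution of the gamma quotients is estimated exactly as in the proof of Lemma~\ref{lemma:5.1} (now with $k$ the weight), and one must exploit the oscillation of $n^v m^{-v}$ and of the gamma phases to avoid losing the full $T^\alpha$; this yields the factor $T^{1+\epsilon}$. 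Assembling the two ranges gives $L_1^+(1/2+it,\phi;n)\ll 2^{k/2}M^\epsilon n^{1/2+\epsilon}T^{1+\epsilon}$.

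The step I expect to be the main obstacle is this last one. A purely term-by-term estimate after taking absolute values in the $u$- and $v$-integrals produces $T^{1+\alpha+\epsilon}$ rather than $T^{1+\epsilon}$, so the argument has to extract genuine cancellation — via an integration-by-parts or stationary-phase analysis in $\Im u$ over the slowly varying Gaussian profile of $h_{T,\alpha}$, or a corresponding gain from the $\Im v$-integration — and carrying this out uniformly for all $0<\alpha\le 1$ and $\beta<1$ is the delicate part of the argument; the arithmetic input (Deligne's bound plus the elementary estimate for $\sigma_{-2it}(m;M)$) and the contour shift are comparatively routine.
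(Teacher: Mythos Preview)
Your arithmetic treatment—splitting the $m$-sum at $n$, applying Deligne on the short range and absolute convergence on the tail, and arriving at $2^{k/2}M^\epsilon n^{1/2+\epsilon}$—matches the paper (after the cosmetic change $v\mapsto v-\tfrac k2$, the paper's $\sigma_v=\tfrac12+\epsilon$ corresponds to your $\sigma_v'\approx\tfrac12-\tfrac k2$).

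The gap is in the $T$-bound. You correctly observe that taking absolute values produces $T^{1+\alpha+\epsilon}$ and flag this as the obstacle, but the remedy you sketch—extracting cancellation by stationary phase or integration by parts in $\Im u$—is not what the paper does, and you leave it unexecuted. The paper recovers the missing $T^{-\alpha}$ by a further \emph{contour shift in $u$}: since $\Re(-v+\tfrac k2+u)>0$ on both $v$-lines in use, the integrand is holomorphic in $u$ to the right, and one pushes $\sigma_u$ all the way to $T^\alpha$ (the largest value for which $h_{T,\alpha}(u/i)$ still retains quadratic-exponential decay in $\Im u$). On the range $|t|\le|r|\le|\gamma|$ with $r,\gamma$ of the same sign—the only range without exponential decay from the gamma quotient—the $\sigma_u$-dependent part of the Stirling polynomial factors is
\[
\Bigl(\frac{|\gamma-r|}{|\gamma+r|}\Bigr)^{\sigma_u}\Bigl(\frac{|\gamma-t|}{|\gamma+t|}\Bigr)^{\sigma_u}\ \ll\ e^{-2(|r|-|t|)\sigma_u/|\gamma|}\ \ll\ e^{-2(|r|-|t|)/T^{1-\alpha}},
\]
which forces $\bigl||r|-|t|\bigr|\ll T^{1-\alpha+\epsilon}$. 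The $r$-integration then runs over an interval of length $T^{1-\alpha}$ rather than $T$, and the polynomial count becomes $T^{1+(\beta-1-\alpha)(\sigma_v-1/2)+\epsilon}\le T^{1+\epsilon}$ once $\sigma_v=\tfrac12+\epsilon$. This contour-shift device is the missing idea; without it, or a fully worked equivalent, your argument does not close.
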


\begin{proof}
Recalling \eqref{e:L+_1/2}, 
%
we move the $u$ line of integration to $1+\epsilon < \Re(u) = 1-\sigma_v+\sigma_u+\epsilon < \frac{3}{2}$ 
for some sufficiently small $\epsilon>0$. 
Since $\sigma_v< \sigma_u$, $1-\sigma_v+\sigma_u+\epsilon > 1+\epsilon$. 
By changing the variable $v+\frac{k}{2}$ to $v'$, so $\sigma_{v'}=k/2+1+\epsilon$, we get, after dropping the $'$, 
\begin{multline}\label{e:L1+_holo_upper1}
L_1^+(1/2+it, \phi; n)
= (2\pi)^{2it} i^k 
\frac{4}{2\pi i} \int_{(1+\epsilon_1)}\frac{h(u/i) u}{\cos(\pi u)}
\frac{1}{\Gamma\left(-u+it+\frac{k}{2}\right)\Gamma\left(u+it+\frac{k}{2}\right)}
\\ \times \frac{1}{2\pi i} \int_{(k/2+1+\epsilon)} \frac{\Gamma\left(-it+v\right) \Gamma\left(it+v\right)\Gamma\left(-v+\frac{k}{2}+u\right)}
{\Gamma\left(1-\frac{k}{2}+u+v\right)} 
\sum_{m=1}^\infty \frac{c_\phi(m+n)\sigma_{-2it}(m; M)}{m^{-it+v}n^{-v+\frac{k}{2}}} 
\; dv\; du. 
\end{multline}

We now estimate $L_1^+(1/2+it, \phi; n)$ from above.
We separate the series into two pieces: 
\begin{equation}
\sum_{m=1}^\infty \frac{\sigma_{-2it}(m; M)}{m^{v-it}}\frac{c_\phi(n+m)}{n^{-v+\frac{k}{2}}}
= \sum_{m=1}^{n-1}  \frac{\sigma_{-2it}(m; M)}{m^{v-it}}\frac{c_\phi(n+m)}{n^{-v+\frac{k}{2}}}
+ \sum_{m=n}^\infty  \frac{\sigma_{-2it}(m; M)}{m^{v-it}}\frac{c_\phi(n+m)}{n^{-v+\frac{k}{2}}}. 
\end{equation}
For the first piece, we move the $v$ line of integration to $\sigma_v=\frac{1}{2}+\epsilon$ and get
\begin{multline}\label{e:OD+_first_short}
\left|\sum_{m=1}^{n-1}  \frac{\sigma_{-2it}(m; M)}{m^{v-it}}\frac{c_\phi(n+m)}{n^{-v+\frac{k}{2}}}\right|
\ll n^{1/2 -k/2+\epsilon'}M^\epsilon \sum_{m=1}^{n-1}  \frac{(n+m)^{\frac{k-1}{2}+\epsilon}}{m^{1/2+\epsilon}}
\\ \ll (nM)^{\epsilon} 2^{\frac{k}{2}} \sum_{m=1}^{n-1}\frac{1}{m^{\frac{1}{2}+\epsilon}}
\ll n^{\frac{1}{2}+\epsilon}2^{\frac{k}{2}}M^\epsilon. 
\end{multline}
For the second piece, we keep the $v$ line of integration as $\sigma_v = 1+\frac{k}{2}+\epsilon$ and get
\begin{multline}\label{e:OD+_first_long}
\left|\sum_{m=n}^\infty  \frac{\sigma_{-2it}(m; M)}{m^{v-it}}\frac{c_\phi(n+m)}{n^{-v+\frac{k}{2}}}\right|
\ll n^{1+\epsilon}M^\epsilon \sum_{m=n}^\infty  \frac{(n+m)^{\frac{k-1}{2}+\epsilon}}{m^{1+\frac{k}{2}+\epsilon}}
\\ \ll n^{1+\epsilon}M^\epsilon 2^{\frac{k}{2}} \sum_{m=n}^\infty \frac{1}{m^{\frac{3}{2}+\epsilon}}
\ll M^\epsilon 2^{\frac{k}{2}} n^{\frac{1}{2}+\epsilon}. 
\end{multline}

Now we study the contribution from the gamma factors. 
By the definition of $h=h_{T, \alpha}$, if we write $u = \sigma_u + i \gamma$, 
there is quadratic exponential decay in $|\gamma|$, 
when $|T-|\gamma| |>T^\alpha$.  

Assume that $|T-|\gamma|| \leq T^{\alpha}$. 
Write $v = \sigma_v + ir$.  
By Stirling's formula, the exponential part of the gamma factor 
 in the integrand in \eqref{e:L1+_holo_upper1} and the $\cos(\pi u)$ contribution is given by 
\begin{equation}
\exp\left(-\frac{\pi}{2}\left( 2|\gamma|  - 2 \max(|\gamma|,|t|) + 2 \max(|t|,|r|) +|\gamma-r| -|\gamma+r|\right)\right).
\end{equation}
Recall $|t| = T^\beta$ for $\beta<1$.
If $|t|>|\gamma|$, since $|T-|\gamma|| \leq T^\alpha$ 
\begin{equation}
T^\beta = |t| > T(1-cT^{\alpha-1})
\end{equation}
so 
\begin{equation}
\beta > 1+\log(1-cT^{\alpha-1}) > 1-cT^{\alpha-1}.
\end{equation}
This cannot happen for arbitrarily large $T$ since $\beta$ is independent of $T$. 
Therefore we are reduced to the case that 
\begin{equation}
|t| \leq |\gamma|. 
\end{equation}

The worst case is when $r$ and $\gamma$ have the same sign,  which gives us
\begin{equation}
\exp\left(-\frac{\pi}{2}\big(2\max\{|t|, |r|\}+ \max\{|r|, |\gamma|\}-\min\{|r|, |\gamma|\}-|r|-|\gamma|\big)\right)
\end{equation}
If $|\gamma|< |r|$, then as $|\gamma|\geq |t|$,  we get
\begin{equation}
\exp\left(-\frac{\pi}{2}\big(2|r|-2|\gamma|\big)\right),
\end{equation}
which has exponential decay. 

Now we are in the situation when $|\gamma|\geq |r|$. 
Then the exponential term is
\begin{equation}
\exp\left(-\frac{\pi}{2}\big(2\max\{|t|, |r|\}-2|r|\big)\right),
\end{equation}
which has exponential decay unless $|t|\leq |r|$.
Thus the only case there is no exponential decay is $|t|\leq |r|\leq |\gamma|$ and $r$ and $\gamma$ have the same sign. 
In this case
\begin{multline}
\frac{u \Gamma\left(v-it\right) \Gamma\left(v+it\right) \Gamma\left(-v+u+\frac{k}{2}\right)}
{\cos(\pi u)\Gamma\left(-u+it+\frac{k}{2}\right)\Gamma\left(u+it+\frac{k}{2}\right)\Gamma\left(v+u+1-\frac{k}{2}\right)}
\\ \ll |\gamma| |-\gamma+t|^{\sigma_u-\frac{k}{2}+\frac{1}{2}} |\gamma+t|^{-\sigma_u-\frac{k}{2}+\frac{1}{2}}
|r-t|^{\sigma_v-\frac{1}{2}} |r+t|^{\sigma_v-\frac{1}{2}}
|\gamma-r|^{\frac{k}{2}+\sigma_u-\sigma_v-\frac{1}{2}}
|\gamma+r|^{-\sigma_v-\sigma_u+\frac{k}{2}-\frac{1}{2}}.
\end{multline}
Consider the $\sigma_u$-power pieces: 
\begin{equation}
\left(\frac{|\gamma-r|}{|\gamma+r|}\right)^{\sigma_u} \left(\frac{|\gamma-t|}{|\gamma+t|}\right)^{\sigma_u}
= 
\left|\frac{1-\left|\frac{r}{\gamma}\right|}{1+\left|\frac{r}{\gamma}\right|}\right|^{|\gamma|\frac{\sigma_u}{|\gamma|}} 
\left|\frac{1-\frac{t}{\gamma}}{1+\frac{t}{\gamma}}\right|^{|\gamma|\frac{\sigma_u}{|\gamma|}}
\ll e^{-(2|r|-2|t|) \frac{\sigma_u}{|\gamma|}},
\end{equation}
because $\gamma$ and $r$ have the same sign.   We have also chosen the case of weakest decay, which is when $t,\gamma$ have opposite signs.

Recall now that $\sigma_v = 1+\frac{k}{2}+\epsilon$ or $\sigma_v = \frac{1}{2}+\epsilon$ and $\sigma_u>1+\epsilon$.   
It follows that $\Re \,(-v+u+\frac{k}{2}) >0$ and thus we may move the $u$ line in a positive direction to $\sigma_u = T^\alpha$.  
(This is as far as we can move $u$ and still maintain the quadratic exponential decay of $h$).   
After doing so we obtain
\begin{equation}
e^{-(2|r|-2|t|) \frac{\sigma_u}{|\gamma|}}\ll e^{-\frac{(2|r|-2|t|)}{T^{1-\alpha}}},
\end{equation}
which is exponentially decaying unless $||r|-|t|| \ll T^{1-\alpha+\epsilon}$. 
It follows that $L_1(1/2+it, \phi; n)$ can be bounded above by
\begin{multline}
\int_{|T-|\gamma|| < T^{\alpha}} 
\int_{\substack{T+cT^{\alpha} \geq |r| \geq T^\beta=|t|, \\ ||r|-T^\beta| \ll T^{1-\alpha+\epsilon}}}
\left|\frac{u \Gamma\left(v-it\right) \Gamma\left(v+it\right) \Gamma\left(-v+u+\frac{k}{2}\right)}
{\cos(\pi u)\Gamma\left(-u+it+\frac{k}{2}\right)\Gamma\left(u+it+\frac{k}{2}\right)\Gamma\left(v+u+1-\frac{k}{2}\right)}\right|
\; dr\; d\gamma 
\\ \ll T^{1-k+1+k -2\sigma_v-1 + \beta(\sigma_v-\frac{1}{2}) + (1-\alpha)(\sigma_v+\frac{1}{2})+\alpha+\epsilon}
= T^{1+(\beta-1-\alpha)(\sigma_v-\frac{1}{2})+\epsilon}.
\end{multline}
A $\beta <1$, the largest exponent occurs in the $\sigma_v = \frac{1}{2} +\epsilon$ case, 
leaving us with an upper bound of $T^{1+\epsilon}$.

Finally, combining with \eqref{e:OD+_first_short} and \eqref{e:OD+_first_long}, we get \eqref{e:L1+_1/2+it_bound}. 
This completes the proof of the lemma. 
\end{proof}

\subsection{The nonholomorphic upper bounds}
In this section we will find upper bounds for $L_{1}^+(s, \phi; n)$, defined in \eqref{e:L1+nonholo_holo}, 
$L_1^-(s, \phi, \nu; n)$ defined in \eqref{e:L1-nonholo}, and $L_2 (s, \phi; n)$, defined in  \eqref{e:L2}, 
at $s=\frac{1}{2}+it$.  
Virtually identical arguments as in the holomorphic case, with $(k-1)/2$ replaced by $\nu$, lead to the upper bounds
\begin{align}
& L_1^-(1/2+it, \phi; n)
\ll M^\epsilon n^\epsilon T^{\alpha +\beta +\epsilon}\sum_{m=1}^{n-1} \frac{|C_\phi(n-m)|}{m^{1/2+\epsilon}}; 
\label{e:L1-_nonoholo_init_upper}
\\ & L_1^+(1/2+it, \phi; n)\ll M^\epsilon n^\epsilon T^{1+\epsilon}\sum_{m=1}^{n} \frac{|C_\phi(n+m)|}{m^{1/2+\epsilon}}. 
\label{e:L1+_nonoholo_init_upper}
\end{align}
Here we've used the fact that 
\begin{equation}
\sigma_{-2it}(n-m; M)\ll M^\epsilon n^\epsilon \quad \text{and} \quad \sigma_{-2it}(m; M)\ll M^\epsilon m^\epsilon.
\end{equation}
Note that the factor $2^{\frac{k}{2}}$ in the error term is no longer there, as  $|2^{\nu}|=1$ for $\nu\in i\R$.
Also note that if $C_\phi (r) \ll r^\theta$, where $\theta$ is the best progress toward the Ramanujan conjecture, 
then the two expressions in  \eqref{e:L1-_nonoholo_init_upper} and \eqref{e:L1+_nonoholo_init_upper} satisfy the bounds
\begin{equation}\label{nonholup}
 L_1^-(1/2+it, \phi; n) \ll M^\epsilon n^{\theta +\frac12+\epsilon} T^{\alpha+\beta+\epsilon}
 \end{equation}
 and
 \begin{equation}\label{nonholup2}
 L_1^+(1/2+it, \phi; n) \ll M^\epsilon n^{\theta+\frac12+\epsilon}T^{1+\epsilon}.
 \end{equation}


Now we estimate $L_2 (1/2+it, \phi; n)$ and
will show that it has exponential decay in $T$ multiplied by $n^{\theta+ \frac12+\epsilon}M^\epsilon$.
Recalling \eqref{e:L2}, and writing $\nu=it'$, 
\begin{multline}\label{e:L2_upper1}
L_2 (1/2+it, \phi; n)
= (2\pi)^{2it} \frac{c_\phi(-1)}{c_\phi(1)}\frac{\cos(\pi it')}{\pi} 
\frac{4}{2\pi i} \int_{(\sigma_u)}\frac{h(u/i) u}{\cos(\pi u)} 
\\ \times \frac{1}{2\pi i} \int_{(\sigma_{v, 2})}  
\frac{\Gamma\left(u-v\right)\Gamma\left(\frac{1}{2}-it-it'+v\right)\Gamma\left(\frac{1}{2}-it+it'+v\right)}
{\Gamma\left(1+u+v\right)} 
n^v \sum_{m=1}^\infty \frac{C_\phi(m)\sigma_{-2it}(n+m; M)}{m^{\frac{1}{2}-it+v}} \; dv\; du, 
\end{multline}
where $\frac{1}{2} < \sigma_{v, 2}< \sigma_u<\frac{3}{2}$. 
Write $u=\sigma_u+i\gamma$ and $v=\sigma_v+ir$. 
We first note that, by Stirling's formula, the exponential part of the gamma factor in the integrand in \eqref{e:L2_upper1} contribution is given by 
\begin{equation}
\exp\left(-\frac{\pi}{2}\left(-2|t'|+2|\gamma|+|\gamma-r| +2\max\{|r-t|, |t'|\}-|\gamma+r|\right)\right).
\end{equation}
Also, recall we have assumed that $|t| = T^{\beta}$ with $\beta<1$ and $|t'|\ll T^{1-\epsilon}$, 
and so is small compared to $\gamma$, which is on the order of $T$.

Clearly the decay is minimized when $\gamma$ and $r$ are the same sign. 
If $|r| >|\gamma|$ then the quantity multiplied by $-\pi/2$ in the exponent becomes 
\begin{equation}
-2|t'|+2|\gamma|+|r| -|\gamma| +2|r| -2|t| -|\gamma|-|r| = 2|r| -2|t| -2|t'|.
\end{equation}
There is thus exponential decay in $T$ as $|r| >T$ and $|t|,|t'| < T^{1-\epsilon}$.

If $|r| \le |\gamma|$ then the quantity multiplied by $-\pi/2$ in the exponent becomes 
\begin{equation}
-2|t'|+2|\gamma|+|\gamma|- |r| +2\max\{|r-t|, |t'|\} -|\gamma|-|r|=-2|t'|+2|\gamma|- 2|r| +2\max\{|r-t|, |t'|\}.
\end{equation}
The worst case (minimal decay) occurs when $r, t$ are the same sign.   
If $|r| \ge |t|$ and $|r-t| \ge |t'|$ the above becomes
\begin{equation}
-2|t'|+2|\gamma|- 2|r| +2|r| -2|t| =2|\gamma|-2|t'|-2|t|.
\end{equation}
This has exponential decay in $T$ as $|t|,|t'| < T^{1-\epsilon}$ and $\gamma \gg T$.
 
If $|r| \ge |t|$ and $|r-t| <  |t'|$, the above becomes
\begin{equation}
-2|t'|+2|\gamma|- 2|r| +2 |t'| =2|\gamma|- 2|r|.
\end{equation}
This must also have exponential decay in T as $|r-t| <  |t'|$, with $r, t$ the same sign and $|t|, |t'| < T^{1-\epsilon}$,
while $\gamma \gg T$.

We are now reduced to the last case, where $r, t$ have the same sign and $|r| <|t|$.  
The quantity multiplied by $-\pi/2$ in the exponent is
\begin{equation}
-2|t'|+2|\gamma|- 2|r| +2\max\{|r-t|, |t'|\}. 
\end{equation}
As all terms except $|\gamma|$ have absolute value less than $T^{1-\epsilon}$, we once again have exponential decay in $T$. This exponentially decaying quantity is multiplied by 
\begin{equation}\label{nonholup3}
n^{v}\sum_{m=1}^\infty \frac{|C_\phi(m)\sigma_{-2it}(n+m;M)|}{m^{\frac{1}{2}+v}} \ll n^{\frac{1}{2}+\theta+\epsilon}M^\epsilon,
\end{equation}
where we have set  $\sigma_v =\frac{1}{2} +\epsilon$.  

The above, together with Lemmas~\ref{lemma:5.1} and \ref{lemma:5.2} completes the proof of  statement \eqref{e:first_est} in Theorem~\ref{thm:upperbound_first}, writing the first moment as a main term plus potential error terms.

To determine the requirements for $T$ under which the errors are of lower order of magnitude than the main term, we refer first to \eqref{e:main_asymp} and  \eqref{e:main_asymp2}, to see that when $t=0$ and $t\ne 0$,
$M(1/2 +it, \phi; n)$, given by \eqref{e:M_holo} when $\phi$ is holomorphic, and   \eqref{e:Mdef_nonholo} when $\phi$ is non-holomorphic,satisfies, for $0 < \epsilon < \alpha$,
\begin{equation}
M(1/2 +it, \phi; n)\gg_{\alpha, \epsilon}
\frac{C_\phi(n)}{n^{\frac{1}{2}} }T^{1+\alpha - \epsilon}.
\end{equation}
Referring to Lemma~\ref{lemma:5.1},Lemma~\ref{lemma:5.2} and \eqref{nonholup},\eqref{nonholup2}, \eqref{nonholup3}
we see that the error is 
\begin{equation}
O_{\alpha, \epsilon_1,\epsilon_2,\epsilon_3}\left( M^{\epsilon_1} 2^{\frac{k}{2}} n^{\frac12 + \theta+ \epsilon_2}T^{\max(1,\alpha+\beta)+\epsilon_3} \right).
\end{equation} 
Combining the previous two lines we see that the main term dominates the error when 
\begin{equation}
C_\phi(n)T \gg \left( M^\epsilon 2^{\frac{k}{2}}n^{1+\theta + \epsilon}\right)^{\frac{1}{\min(\alpha, 1-\beta)}+\epsilon},
\end{equation}
for sufficiently small $0< \epsilon < \min(\alpha, 1-\beta)$.  This establishes the dominance of the main term over the error given in line \eqref{bound} of Theorem~\ref{thm:upperbound_first}.
\section{Proof of Corollary~\ref{cor:first_eis}}\label{ss:proof_cor_first_eis}
For this corollary, we take $\phi$ as the completed Eisenstein series for $\SL_2(\Z)$: 
\begin{multline}\label{e:Eis_1}
\phi(z) = \frac{1}{2} E^*(z, 1/2+it)
\\ =\frac{1}{2} \Gamma\left(\tfrac{1}{2}+it\right) \zeta(1+2it) \pi^{-\frac{1}{2}-it} y^{\frac{1}{2}+it} 
+ \frac{1}{2} \Gamma\left(\tfrac{1}{2}-it\right)\pi^{-\frac{1}{2}+it}\zeta(1-2it) y^{\frac{1}{2}-it}
\\ + \sum_{n\neq 0} \sigma_{-2it} (n)|n|^{it} \sqrt{y} K_{it}(2\pi|n|y) e^{2\pi inx}.
\end{multline}
Since each Maass cusp form $u_j$ for $\SL_2(\Z)$ is taken to be an eigenfunction for Hecke operators, we get 
\begin{equation}
\scrL(s, \phi\times u_j) = \zeta(2s)\rho_j(1) \sum_{m=1}^\infty \frac{\sigma_{-2it}(m)m^{it} \lambda_j(m)}{m^s}
= \rho_j(1) L(s+it, u_j) L(s-it, u_j)
\end{equation}
and 
\begin{equation}\label{e:scrL_infty_level1}
\scrL(s, ir;\phi) 
= \frac{2\zeta(s+it+ir)\zeta(s+it-ir)\zeta(s-it+ir)\zeta(s-it-ir)}{\Gamma\left(\frac{1}{2}-ir\right)\zeta(1-2ir) \pi^{-\frac{1}{2}+ir}} 
\end{equation}

In the first moment $K(s, \phi; n, h)$ \eqref{e:K_firstmoment_def} for $\SL_2(\Z)$, we consider the continuous spectrum: 
for $\Re(s)>1$, by \eqref{e:scrL_infty_level1}
\begin{multline}
K_{\cont}(s, \phi; n, h) 
\\ = \frac{1}{4\pi} \int_{-\infty}^\infty \frac{h(r)}{\cosh(\pi r)} 
\frac{4\zeta(s+it+ir)\zeta(s+it-ir)\zeta(s-it+ir)\zeta(s-it-ir)}{\zeta^*(1+2ir)\zeta^*(1-2ir)}
\sigma_{-2ir}(n)n^{ir} \; dr. 
\end{multline}
Here $\zeta^*(s) = \Gamma\left(\frac{s}{2}\right)\zeta(s)\pi^{-\frac{s}{2}}$. 
As we move $s$ to $\Re(s)=1/2$, we pass over the poles of the product of four zeta functions in the numerator
at $s\pm it\pm ir=1$. 
Change the variable $ir=z$. 
When $\Re(s)=1+\epsilon$ for a sufficiently small $\epsilon>0$,
we bend the $z$-line of integration to the right, over $s$, passing over the poles of $\zeta(s\pm it-z)$ 
and collecting residues with negative signs.  
We then continue $s$ back to $\Re(s)=1/2$ and  bend the $z$ line of integration back to $\Re(z) =0$, 
collecting more residues, which has the effect of doubling the original residue.   
The result is
\begin{multline}
K_{\cont}(s, \phi; n, h) 
\\ =\frac{1}{4\pi i} \int_{\Re(z)=0} \frac{h(z/i)}{\cos(\pi z)} 
\frac{4\zeta(s+it+z)\zeta(s+it-z)\zeta(s-it+z)\zeta(s-it-z)}{\zeta^*(1+2z)\zeta^*(1-2z)}
\sigma_{-2z}(n)n^{z} \; dz
\\ + 4\bigg\{\frac{h((1-s+ it)/i)}{\cos(\pi(1-s+ it))} 
\frac{\zeta(1+2it)\zeta(-1+2s)}{\zeta^*(3-2s+2it)\Gamma\left(-\frac{1}{2}+s-it\right)\pi^{\frac{1}{2}-s+it}}
\\ + \frac{h((1-s-it)/i)}{\cos(\pi(1-s- it))} 
\frac{\zeta(1-2it)\zeta(-1+2s)}{\zeta^*(3-2s-2it)\Gamma\left(-\frac{1}{2}+s+it\right)\pi^{\frac{1}{2}-s-it}}
\bigg\}.
\end{multline}
Taking $s=1/2$,  using the fact that $\zeta(0)=-\frac{1}{2}$, and applying the reflection formula for gamma functions, 
\begin{multline}
K_{\cont}(1/2, \phi; n, h) 
\\ =\frac{1}{4\pi i} \int_{\Re(z)=0} \frac{h(z/i)}{\cos(\pi z)} 
\frac{4\zeta(1/2+it+z)\zeta(1/2+it-z)\zeta(1/2-it+z)\zeta(1/2-it-z)}{\zeta^*(1+2z)\zeta^*(1-2z)}
\sigma_{-2z}(n)n^{z} \; dz
\\ - 2\bigg\{h((1/2+ it)/i)\frac{\zeta(1+2it)}{\zeta(2+2it)}
+h((1/2-it)/i)\frac{\zeta(1-2it)}{\zeta(2-2it)}\bigg\}.
\end{multline}

Taking $s=1/2$ for the main term $M(s, \phi; n)$, by \eqref{e:M_nonholo_t=0}, 
we get \eqref{e:M_Eis_s=1/2}. 
Recalling \eqref{e:M+} and taking $s=1/2$, we get \eqref{e:Mpm_s=1/2}. 
Combining them together, we get \eqref{e:first_eis_level1}. 

\section{Proof of Corollary~\ref{cor:determine}}\label{ss:proof_cor_detrmine}
The left hand side of Theorem~\ref{thm:upperbound_first} consists of two pieces, 
corresponding to the discrete and the continuous spectrum.   
We will begin by finding an upper bound for the continuous part of the first moment when the level is $M$ .
In both the holomorphic and Maass form cases, this is given by
\begin{equation}
K_{\cont} (n,\phi)
=\sum_{\cuspa} \frac{1}{4\pi} \int_{-\infty}^\infty \frac{h_{T, \alpha} (r)}{\cosh(\pi r)}
\tau_{\cuspa} (1/2-ir, n) \scrL(1/2, \phi\times E_{\cuspa}(*, 1/2+ir)) \; dr
\end{equation}
This is bounded from above in the following. 

\begin{lemma}\label{lem:cont}
Let $\phi$ be a newform of even weight $k$ if holomorphic and type $\nu$ if non-holomorphic, 
of level $N$, which divides $M$.  
Let $\kappa=k$ if $\phi$ is holomorphic, and $\kappa = |\nu|$ if $\phi$ is non-holomorphic.
For any  $\epsilon >0$, and for $T \gg \kappa$,
\begin{equation}
K_{\cont}(n,\phi) \ll N^{\frac{1}{2}}\kappa  T (Tn \kappa M)^{\epsilon}
\end{equation}
\end{lemma}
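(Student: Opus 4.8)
The plan is to bound the continuous-spectrum contribution by inserting the explicit formula \eqref{e:RS_phiE} for the Rankin--Selberg convolution $\scrL_\cuspa(1/2, ir; \phi)$ at each cusp, converting it into an integral of products of $L$-functions of $\phi$ twisted by Eisenstein series, and then estimating via standard subconvexity-free bounds together with the properties of the test function $h_{T,\alpha}$. First I would recall that, because $\phi$ is a newform of level $N\mid M$, the Rankin--Selberg convolution $\scrL(s,\phi\times E_\cuspa(\ast,1/2+ir))$ factors (up to finitely many Euler factors at primes dividing $M$, each contributing at most $M^\epsilon$) as $L(s, \phi\times E_\cuspa)$, which in turn is essentially a ratio of completed $L$-functions: the numerator is a product of shifts of the degree-$2$ standard $L$-function $L(\phi,s\pm ir)$ (or $L(\phi\times\chi, s\pm ir)$ for the character $\chi$ attached to the relevant cusp), and the denominator contains $\zeta^{(M)}(1+2ir)$-type factors that stay bounded away from zero on $\Re=1$. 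The gamma-factor ratio arising from the functional equation, evaluated on the critical line, contributes the archimedean size: for $\phi$ of type $\nu$ (or weight $k$) the completed factors force a factor of size roughly $\kappa$ once the $r$-integral is restricted to $|r|\ll T$ by the test function.

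The key steps, in order, are: (i) write $K_\cont(n,\phi)$ as a finite sum over cusps $\cuspa$ of $\Gamma_0(M)$ of $r$-integrals, using $\tau_\cuspa(1/2-ir,n)\ll_\epsilon (|n|(|r|+1)M)^\epsilon \sigma_0(n)$ for the Fourier coefficients of the Eisenstein series (these are, up to the $M$-dependent Euler factors controlled in Lemma~\ref{lem:sum_cxiq} and \cite{Y19}, divisor-type functions times a Gauss-sum normalization of size $O(N^{1/2})$ or less); (ii) bound the integrand $\scrL_\cuspa(1/2,ir;\phi)$ by the convexity bound for the standard $L$-function of $\phi$ on the critical line, namely $L(\phi, 1/2+i(r\pm t')) \ll (\kappa + |r|)^{1/2+\epsilon}$ uniformly, together with $|1/\zeta^{(M)}(1+2ir)| \ll M^\epsilon \log(2+|r|)$; (iii) use that $h_{T,\alpha}(r)/\cosh(\pi r)$ localizes $r$ to $|r\mp T|\ll T^{\alpha+\epsilon}$ with $h_{T,\alpha}\ll 1$ there, together with the gamma-factor decay which contributes a factor of order $\kappa^{1/2}$ from each of the (at most two) shifted degree-$1$ pieces, so that on the support the integrand is $\ll \kappa^{1+\epsilon} T^{1+\epsilon} (nM)^\epsilon$; (iv) multiply by the length $O(T^{\alpha+\epsilon})$ of the $r$-support — but note that since $\alpha\le 1$ one instead gets the cleaner bound by integrating the polynomial growth against the Gaussian, giving a total of $O(N^{1/2}\kappa T (TnM\kappa)^\epsilon)$ after accounting for the $O(1)$ cusps and the $N^{1/2}$ from the Gauss sums.

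The main obstacle I expect is step (ii)--(iii): getting the archimedean dependence on $\kappa$ down to the first power (rather than $\kappa^{3/2}$ or worse) in a way that is genuinely uniform in $r$ over the whole support $|r|\asymp T$, and simultaneously handling the fact that $t$ (the parameter in $\phi=\tfrac12 E^*$ is not relevant here, but the spectral parameter of $\phi$ is) may itself be as large as $\kappa$. This requires a careful Stirling analysis of the gamma-factor ratio in the functional equation of $\scrL_\cuspa(1/2,ir;\phi)$ exactly as carried out for $L_1^\pm$ and $L_2$ in the proof of Theorem~\ref{thm:upperbound_first}: one checks that the ratio is $\ll (\kappa+|r|)^{O(1)}$ with the correct exponent, and that there is no extra growth in $|r|$ beyond what is already absorbed by the convexity bound and the rapid decay of $h_{T,\alpha}$. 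Once that archimedean bookkeeping is done, the arithmetic input (divisor bounds for $\tau_\cuspa$, the size $N^{1/2}$ of the Gauss sums, and $M^\epsilon$ from the bad Euler factors) is routine and assembles to the claimed bound $K_\cont(n,\phi)\ll N^{1/2}\kappa\, T\,(Tn\kappa M)^\epsilon$ for $T\gg\kappa$.
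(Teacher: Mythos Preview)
Your plan has a genuine gap at step (ii)--(iv): pointwise convexity is not strong enough to recover the stated bound. On the support $|r|\asymp T$ of $h_{T,\alpha}$, the analytic conductor of $L(1/2+ir,\phi)$ is of size $N(\kappa+|r|)^2\asymp NT^2$ (since $T\gg\kappa$), so convexity gives $|L(1/2+ir,\phi)|\ll N^{1/4}T^{1/2+\epsilon}$, not $(\kappa+|r|)^{1/2+\epsilon}$ with $\kappa$ in place of $T$. Squaring and integrating over the window of length $T^{\alpha}$ yields $N^{1/2}T^{1+\alpha+\epsilon}$; for $\alpha=1$ (the choice used in the application to Corollary~\ref{cor:determine}) this is $N^{1/2}T^{2+\epsilon}$, which is of the same order as the main term and so useless. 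Your sentence ``the gamma-factor ratio \ldots\ contributes a factor of order $\kappa^{1/2}$'' conflates the archimedean conductor with a separate gamma ratio; there is no mechanism to replace the $T$ in the convexity bound by $\kappa$. The paper instead avoids pointwise bounds entirely and uses the \emph{second moment} (weak Lindel\"of on average):
\[
\int_{-T}^{T}\bigl|L(1/2+ir,\phi)\bigr|^2\,dr \ \ll\ (N^{1/2}\kappa T)^{1+\epsilon},
\]
obtained from the approximate functional equation (Dirichlet polynomial of length $\ll(N^{1/2}\kappa T)^{1+\epsilon}$) and the Montgomery--Vaughan mean value theorem (cf.\ \cite[Theorem~9.1]{ik04}). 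This is what produces the factor $\kappa T$ rather than $T^{1+\alpha}$.

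A second issue is your treatment of the cusps. The number of cusps of $\Gamma_0(M)$ is not $O(1)$ uniformly in $M$, and the individual $\tau_\cuspa(1/2+ir,n)$ are not uniformly $\ll (|n||r|M)^\epsilon$; there is also no $N^{1/2}$ Gauss-sum factor sitting inside each $\tau_\cuspa$ in the way you describe. The paper handles the cusp sum by Cauchy--Schwarz together with a uniform bound of the shape
\[
\frac{|\zeta(1+2ir)|^2}{\cosh(\pi r)}\sum_{\cuspa}\bigl|\tau_\cuspa(1/2+ir,n)\bigr|^2 \ \ll_\epsilon\ \bigl((1+|r|)|n|\bigr)^\epsilon,
\]
applied to both $\tau_\cuspa(1/2-ir,n)$ and $\tau_\cuspa(1/2+ir,1)$, after first factoring $\scrL_\cuspa(1/2,ir;\phi)=\tau_\cuspa(1/2+ir,1)\,L(1/2+ir,\phi)L(1/2-ir,\phi)$ (up to Euler factors at primes dividing $M$). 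This reduces everything to the single $r$-integral of $|L(1/2+ir,\phi)|^2$, where the second-moment bound above applies.
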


\begin{proof}
We have
\begin{equation}
\left|\scrL(1/2, \phi\times E_{\cuspa}(*, 1/2+ir))\right| 
\ll \left|\tau_{\cuspa}(1/2+ir,1)\right| \left| L(1/2+ir, \phi)\right| \left|L(1/2-ir, \phi)\right|. 
\end{equation}
In \cite[Lemma~3.4]{B}, it is shown (after converting to our notation) that
\begin{equation}
\frac{\left|\zeta(1+ir)\right|^2}{\cosh(\pi r)}
\sum_\cuspa \left| \tau_\cuspa \left(1/2+ir, n\right)\right|^2
\ll_\epsilon ((1+|r| )|n|)^{\epsilon}
\end{equation}
for any $\epsilon>0$, with the implied constant depending only on $\epsilon$.

Applying Cauchy-Schwartz, and the lower bound  $|\zeta(1+2ir)| \gg (\log(|r|)^{-1}$, for large $|r|$, 
to the sum over cusps $\cuspa$ gives us
\begin{multline}
\left|\frac{1}{\cosh(\pi r)} \sum_\cuspa  \tau_\cuspa \left(1/2-ir, n\right)\tau_{\cuspa}\left(1/2+ir,1\right) \right| 
\\ \leq 
\left(\sum_\cuspa \frac{ |\tau_\cuspa \left(1/2-ir, n)\right|^2}{\cosh(\pi r)} \right)^{1/2}  
\left( \sum_\cuspa \frac{|\tau_\cuspa \left(1/2+ir, 1\right|^2}{\cosh(\pi r)}\right)^{1/2}  \ll_\epsilon ((1+|r| )|n|M)^{\epsilon}.
\end{multline}

Combining the above,
\begin{multline}\label{contup}
\frac{1}{\cosh (\pi r)}\sum_{\cuspa} 
\frac{1}{4\pi} \int_{-\infty}^\infty 
\frac{h_{T, \alpha}(r)}{\cosh(\pi r)}
\tau_{\cuspa} (1/2-ir, n) 
\scrL(1/2, \phi\times E_{\cuspa}(*, 1/2+ir)) \; dr \\ \ll  ((1+|r| )|n|M)^{\epsilon}
|\int_{-T^\alpha}^{T^\alpha} \left|  L(1/2+ir, \phi) L(1/2-ir, \phi) \right|\,dr.
\end{multline}

Recall the definition of $\kappa$.
We have the well known  weak Lindel\"of on average result that for $T \gg (N^{\frac{1}{2}} \kappa)^{1+\epsilon}$,   
\begin{equation}
\int_{-T}^T L(1/2+ir, \phi) L(1/2-ir, \phi)\; dr 
\ll (N^{\frac{1}{2}}\kappa T)^{1+\epsilon}.
\end{equation}
This follows because the square root of the analytic conductor of  $L(1/2\pm it, \phi)$ is 
$\ll (N^{\frac{1}{2}}\kappa T)^{1+\epsilon}$.
The approximate functional equation can be then used to write each $L(1/2\pm it, \phi)$ as a Dirichlet polynomial 
of length $(N^{\frac{1}{2}}\kappa T)^{1+\epsilon}$.  
Then applying \cite[Theorem~9.1]{ik04}, it follows that 
\begin{equation}
K_{\cont} (n,\phi) 
\ll N^{\frac{1}{2} }\kappa T (Tn \kappa M)^{\epsilon}
\sum_{1\le m \ll (N^{\frac{1}{2}}\kappa T)^{1+\epsilon}}\frac{C(m)}{m}.
\end{equation}
A necessary additional ingredient is that for any $X \gg (N^{\frac{1}{2}} \kappa T)^{1+\epsilon}$, 
in both the Maass form and holomorphic cases, 
\begin{equation}
\sum_{1\le n \ll X}\frac{C(n)^2}{n} \ll X^{\epsilon}.
\end{equation}
This is well known, but also follows from \eqref{e:sum_Cphi_upper}.
The lemma then follows immediately. 
\end{proof}

To prove Corollary~\ref{cor:determine}, let $\phi_1$ and $\phi_2$ be two automorphic cusp forms 
(either holomorphic or non-holomorphic) 
and take the differences of the two spectral first moments \eqref{e:K_firstmoment_def}, 
with $s=\frac{1}{2}$ and $h=h_{T, \alpha}$. 
By \eqref{e:first_est} in Theorem~\ref{thm:upperbound_first}, 
\begin{multline}\label{e:phiphi'_first_difference}
K(1/2, \phi_1; n, h_{T, \alpha}) - K(1/2, \phi_2; n, h_{T, \alpha})
= \big(M(1/2, \phi_1; n)- M(1/2, \phi_2; n)\big) 
\\+ O_\epsilon(M^{\epsilon} 2^{\frac{\kappa}{2}} n^{\frac{1}{2}+\theta+\epsilon} T^{\max\{1, \alpha+\beta\}+\epsilon}). 
\end{multline}
For sufficiently large $T$, assume that 
\begin{equation}
\scrL(1/2, \phi_1 \times u_j) = \scrL(1/2, \phi_2 \times u_j),
\end{equation}
for $|r_j| \ll T^{1+\epsilon}$.   
Then the discrete contribution in the difference \eqref{e:phiphi'_first_difference} will vanish, 
up to an exponentially decreasing error term.   
The continuous part, as proved in Lemma~\ref{lem:cont},
contributes at most on the order of $M^{\frac{1}{2}}\kappa  T (Tn \kappa M)^{\epsilon}$.
 
Applying this estimate we have, after applying Theorem~\ref{thm:upperbound_first}, 
taking $\alpha =1$ and $\beta=-\infty$ (for $t=0$),
\begin{multline}\label{e:diff_main_upper}
\left|M(1/2, \phi_1; n)-M(1/2, \phi_2; n)\right| \ll 
M^{\frac{1}{2}} \kappa T (Tn\kappa M)^{\epsilon} 
+ M^{\epsilon} n^{\epsilon} T^{1+\epsilon} \big(E(\phi_1; n) + E(\phi_2; n)\big)
\\ + M^{\epsilon} T^{-A}  n^{1+\epsilon} 
\bigg(\left|\sum_{m>n} \frac{C_{\phi_1}(n+m)}{m^{\frac{3}{2}+\epsilon}}\right|
+\left|\sum_{m>n} \frac{C_{\phi_2}(n+m)}{m^{\frac{3}{2}+\epsilon}}\right|\bigg)
\end{multline}
for arbitrarily large $A$, 
where
\begin{equation}
E(\phi; n) = \left|\sum_{m=1}^{n} \frac{C_\phi(n-m)}{m^{\frac{1}{2}+\epsilon}} \right|
+ 2^{\frac{k}{2}} \left|\sum_{m=1}^n \frac{C_\phi(n+m)}{m^{\frac{1}{2}+\epsilon}}\right|.
\end{equation}
Note that when $n\ll T$, 
\begin{equation}
M^{\epsilon} T^{-A} 2^{\frac{k}{2}} n^{1+\epsilon} 
\bigg(\left|\sum_{m>n} \frac{C_{\phi_1}(n+m)}{m^{\frac{3}{2}+\epsilon}}\right|
+\left|\sum_{m>n} \frac{C_{\phi_2}(n+m)}{m^{\frac{3}{2}+\epsilon}}\right|\bigg)
\ll T^{-A} 2^{\frac{k}{2}} n^{\frac{1}{2}+\epsilon'} \ll T^{-A'}
\end{equation}
for arbitrarily large $A$ and $A'$. 
Also, applying \eqref{e:main_asymp}, when $n \ll T$, 
we see that 
\begin{equation}\label{e:diff_main_asymp}
\left|M(1/2, \phi_1; n)-M(1/2, \phi_2; n)\right| 
\gg T^2\log(T) \frac{\left|C_{\phi_1}(n)-C_{\phi_2}(n)\right|}{\sqrt{n}}.
\end{equation}
Combining \eqref{e:diff_main_upper} and \eqref{e:diff_main_asymp}, 
\begin{equation}\label{e:diff_Cphi_upper}
\left|C_{\phi_1}(n)-C_{\phi_2}(n)\right|
\ll \frac{\sqrt{n}}{T^2\log T} \bigg\{M^{\frac{1}{2}}\kappa T (T\kappa nM)^{\epsilon} 
+ M^{\epsilon} n^{\epsilon} T^{1+\epsilon} 
\big(E(\phi_1; n) + E(\phi_2; n)\big)\bigg\}.
\end{equation}

Note that the conductor for $L(s, \phi\times\phi)$ is $N^2\kappa^2$ and 
\begin{equation}
\frac{1}{2\pi i} \int_{(2)} \frac{L(s, \phi\times \phi)}{s(s+1)(s+2)} X^s \; ds
= \frac{1}{2}\sum_{n<X} C_\phi(n)^2 \left(1-\frac{n}{X}\right)^2.
\end{equation}
By moving the contour of the integral to $\Re(s)<1$, passing over the pole of $L(s, \phi\times \phi)$ at $s=1$, 
we get 
\begin{equation}
\frac{1}{2\pi i} \int_{(2)} \frac{L(s, \phi\times \phi)}{s(s+1)(s+2)} X^s \; ds= \frac{1}{2} L(1, \Sym^2\phi) X + O\left((XN\kappa)^{\frac{1}{2}+\epsilon}\right). 
\end{equation}
For $X\gg (XN\kappa)^{\frac{1}{2}+\epsilon}$, i.e., $X\gg (N\kappa)^{1+\epsilon}$, we get
\begin{equation}\label{e:sum_Cphi_upper}
\sum_{n\ll X} C_\phi(n)^2 \ll X^{1+\epsilon}. 
\end{equation}

We apply Cauchy-Schwartz to \eqref{e:diff_Cphi_upper}:
\begin{equation}
\left|C_{\phi_1}(n)-C_{\phi_2}(n)\right|^2
\ll n^{1+\epsilon} T^{-2+\epsilon} \bigg\{(\kappa^2M)^{1+\epsilon} 
+ 2M^{\epsilon} \big(E(\phi_1; n)^2 + E(\phi_2; n)^2\big)\bigg\}.
\end{equation}
Now we estimate 
\begin{equation}
E(\phi; n)^2 \leq 3 \bigg\{ \left|\sum_{m=1}^{n} \frac{C_\phi(n-m)}{m^{\frac{1}{2}+\epsilon}} \right|^2
+ 2^{k} \left|\sum_{m=1}^n \frac{C_\phi(n+m)}{m^{\frac{1}{2}+\epsilon}}\right|^2\bigg\}
\end{equation}

For the first term, by \eqref{e:sum_Cphi_upper} 
 for $n\gg (N\kappa)^{1+\epsilon}$, 
(necessary so the upper bound \eqref{e:sum_Cphi_upper} is valid)
\begin{equation}
\left|\sum_{m=1}^{n} \frac{C_\phi(n-m)}{m^{\frac{1}{2}+\epsilon}} \right|^2
\ll \sum_{m=1}^{n} C_\phi(m)^2 \ll n^{1+\epsilon}. 
\end{equation}
The second term, similarly for $n\gg (N\kappa)^{1+\epsilon}$, 
\begin{equation}
\left|\sum_{m=1}^n \frac{C_\phi(n+m)}{m^{\frac{1}{2}+\epsilon}}\right|^2
\ll \sum_{m=n+1}^{2n} C_\phi(m)^2 \ll n^{1+\epsilon}
\end{equation}
and we get 
\begin{equation}
E(\phi; n)^2\ll 2^k n^{1+\epsilon}. 
\end{equation}
Therefore, for $n\gg (\max\{N, N'\} \kappa)^{1+\epsilon}$, 
\begin{equation}\label{e:diff_Cphi_2_upper}
\left|C_{\phi_1}(n)-C_{\phi_2}(n)\right|^2
\ll n^{1+\epsilon} T^{-2+\epsilon} \bigg\{(\kappa^2M)^{1+\epsilon} 
+ 2^{k} M^{\epsilon} n^{1+\epsilon} \bigg\}.
\end{equation}

We now give the proof of a variation on \cite[Theorem~1]{Sen04}, adapted to level, 
and get a lower bound for a sum over $n$ of $|C_{\phi_1}(n)-C_{\phi_2}(n)|^2$.  
Choose $G(x)$ supported on $x\in [1/2, 1]$ and let 
\begin{equation}
g(s) = \int_0^\infty G(x)x^{s} \; \frac{dx}{x}
\end{equation}
be its Mellin transform. 
We further assume that $g(1)=1$. 
We have 
\begin{multline}
\frac{1}{2\pi i} \int_{(2)} \frac{L(s, \phi_1\times \phi_2)}{\zeta(2s)} g(s) X^s \; ds
= \sum_{n=1}^\infty C_{\phi_1}(n)C_{\phi_2}(n) 
\frac{1}{2\pi i} \int_{(2)}  g(s) \left(\frac{n}{X}\right)^{-s} \; ds
\\ = \sum_{n=1}^\infty C_{\phi_1}(n)C_{\phi_2}(n) G\left(\frac{n}{X}\right). 
\end{multline}
We move the contour of the integral to $\Re(s)=\frac{1}{2}+\epsilon$ and get
\begin{equation}
= \frac{X}{\zeta(2)} \Res_{s=1} L(s, \phi_1\times \phi_2) 
+ O(X^{\frac{1}{2}+\epsilon} (N_1N_2\kappa^2)^{\frac{1}{2}+\epsilon}). 
\end{equation}

When $\phi_1\neq \phi_2$, 
\begin{multline}
\sum_{n=1}^\infty |C_{\phi_1}(n)-C_{\phi_2}(n)|^2 G\left(\frac{n}{X}\right)
\\ = \sum_{n=1}^\infty |C_{\phi_1}(n)|^2 G\left(\frac{n}{X}\right) 
+ \sum |C_{\phi_2}(n)|^2 G\left(\frac{n}{X}\right)
- \sum_{n=1}^\infty C_{\phi_1}(n) \overline{C_{\phi_2}(n)}G\left(\frac{n}{X}\right) 
- \sum_{n=1}^\infty \overline{C_{\phi_1}(n)} C_{\phi_2}(n) G\left(\frac{n}{X}\right)
\\ = L(1, \Sym^2 \phi_1) X+ L(1, \Sym^2\phi_2) X 
+ O((X^{\frac{1}{2}+\epsilon} (N_1N_2\kappa^2)^{\frac{1}{2}+\epsilon}). 
\end{multline}
So when $X^{1-\epsilon} \gg X^{\frac{1}{2}+\epsilon}(N_1N_2\kappa^2)^{\frac{1}{2}+\epsilon}$, i.e., 
$X\gg (N_1N_2\kappa^2)^{1+\epsilon}$, 
\begin{equation}\label{e:diff_sum_Cphi_2_lower}
\sum \left|C_{\phi_1}(n)-C_{\phi_2}(n)\right|^2G\left(\frac{n}{X}\right) \gg X^{1-\epsilon}.
\end{equation}

On the other hand, by \eqref{e:diff_Cphi_2_upper}
\begin{equation}
\sum_{n=1}^\infty
\left|C_{\phi_1}(n)-C_{\phi_2}(n)\right|^2G\left(\frac{n}{X}\right)
\ll T^{-2+\epsilon} 2^k \sum_{n=1}^\infty n^{2+\epsilon}  G\left(\frac{n}{X}\right)
\ll T^{-2+\epsilon} 2^k X^{3+\epsilon}.
\end{equation}
Comparing with \eqref{e:diff_sum_Cphi_2_lower}, $X^{1-\epsilon} \ll T^{-2+\epsilon}2^k X^{3+\epsilon}$. 
For $T >2^{\frac{k}{2}}X$, i.e., any $T\gg (2^{\frac{k}{2}} N_1N_2\kappa^2)^{1+\epsilon}$, 
we get a contradiction and conclude that $\phi_1=\phi_2$. 

\thispagestyle{empty}
{\footnotesize
\nocite{*}
\bibliographystyle{amsalpha}
\bibliography{reference_fRS}
}
\end{document}